\pretocmd{\section}{\addtocontents{toc}{\protect\addvspace{5\p@}}}{}{}
\newcommand\cme{\cM_e}
\newcommand\ume{\uM_e}
\date{July 5, 2021}
\title[Compactification of moduli of elliptic K3 surfaces]
{Compactifications of moduli of elliptic\\ K3 surfaces: stable pair
  and toroidal} 
\author{Valery Alexeev}
\email{valery@uga.edu}
\address{Department of Mathematics, University of Georgia, Athens GA
  30602, USA}
\author{Adrian Brunyate}
\email{brunyate@gmail.com}
\author{Philip Engel}
\email{philip.engel@uga.edu}
\address{Department of Mathematics, University of Georgia, Athens GA
  30602, USA}
\begin{document}
\numberwithin{equation}{section}

\begin{abstract}
  We describe two geometrically meaningful compactifications of the moduli space of elliptic K3 surfaces via stable slc pairs, for two different choices of a polarizing divisor, and show that their normalizations are two different toroidal compactifications of the moduli space, one for the ramification divisor and another for the rational curve divisor. 

In the course of the proof, we further develop the theory of integral affine spheres with 24 singularities. 
We also construct moduli of rational (generalized) elliptic stable slc surfaces of types ${\bf A_n}$ ($n\ge1$), ${\bf C_n}$ ($n\ge0$) and ${\bf E_n}$ ($n\ge0$).
\end{abstract}

\maketitle 
\tableofcontents
\listoffigures
\listoftables


\section{Introduction}
\label{sec:intro}

It is well known \cite{mumford1972an-analytic-construction,
  namikawa1976a-new-compactification-of-the-siegel1,
  alexeev1999on-mumfords-construction,
  alexeev2002complete-moduli}
that there exists a functorial, geometrically meaningful
compactification of the moduli space of principally polarized abelian
varieties $A_g$ via stable pairs whose normalization is a
distinguished toroidal compactification $\oA_g^{\rm vor}$ for the 2nd
Voronoi fan. Finding analogous compactifications for moduli spaces
of K3 surfaces is a major problem that guided and motivated a lot of
research in the last twenty years. Here, we solve this problem in the
case of elliptic K3 surfaces, and in two different ways.

The moduli space of stable pairs provides a geometrically meaningful
compactification $\oP_{2d, n}$ for the moduli space $P_{2d, n}$ of
pairs $(X,\epsilon R)$, where $X$ is a K3 surface with $ADE$
singularities, $L$ a primitive ample polarization of degree $L^2=2d$,
and $R\in |nL|$ an effective divisor. We recall this construction in
Section~\ref{sec:complete-moduli}.

Let $F$ be a moduli space of K3 surfaces with lattice
polarization $\bM\subset\Pic X$. The most common example is the
moduli space $F_{2d}$ of primitively polarized K3 surfaces $(X,L)$ of
degree $L^2=2d$; here $\bM = \bZ h$ with $h^2=2d$. The main subject of
this paper is $F=F_\el$, the moduli space of K3 surfaces polarized by
the standard rank 2 even unimodular lattice $H=\II_{1,1}$, with a
choice of vectors $s,f$ such that $s^2=-2$, $f^2=0$, $s\cdot f=1$. 
Choosing the marking appropriately,
these are elliptic surfaces $X\to\bP^1$ with a section $s$ and fiber $f$.

Pick a vector $h\in \bM$ with $h^2=2d>0$ representing an ample line
bundle $L$ on a generic surface in $F$.
Next, if possible, make a
\emph{canonical choice} of an effective divisor $R\in |nL|$ for all
the surfaces in $F$. This gives an embedding $F\into P_{2d,n}$.  Let
$\oF^\slc$ be the closure of $F$ in $\oP_{2d,n}$,
taken with the reduced scheme structure. This is a projective
variety. We are interested in whether this compactification
can be described explicitly, and which stable pairs $(X,\epsilon R)$ appear
over the boundary.

Since $F=\mathbb{D}/G$ is an arithmetic
quotient of a Hermitian symmetric domain of type IV, it is natural to
ask if $\oF^\slc$ is related to a toroidal compactification
$\overline{\mathbb{D}/G}^\tor$
of \cite{ash1975smooth-compactifications}
for some choices of admissible fans at
the 0-cusps of the Baily-Borel compactification. For $F=F_\el$ there is
only one 0-cusp. So the combinatorial data is a $\Gamma$-invariant
fan: a rational polyhedral decomposition of the rational
closure $\oC_\Q$ of the positive cone in $\IIell\otimes \R$ which
is invariant under the group
$\Gamma=O^+(\IIell)$ of isometries of the even unimodular
lattice of signature $(1,17)$. There is a very natural choice of fan because
$\Gamma$ contains an index $2$ subgroup generated by reflections
and we may take the fan to be the $\Gamma$-orbit of the Coxeter chamber.

There are many natural choices of a polarizing divisor for
$F$. One comes from the embedding of $F$ into $F_2$ as the
unigonal divisor. Every K3 surface of degree~2 comes with a canonical
involution. For a generic surface the quotient $X/\bZ_2$ is
$\bP^2$. The surfaces $X$ in the unigonal divisor have an $A_1$
singularity, which upon being resolved becomes the section $s$
of an elliptic fibration, and the double cover $X\to \bP(1,1,4)$
is the elliptic involution. Thus the ramification divisor $R$ is the trisection
of nontrivial $2$-torsion points on the fiber.
It is absolutely canonical and one checks that $R\in |3(s+2f)|$.
We denote the corresponding stable pair
compactification by $\slcram$. In Section~\ref{sec:ramification-divisor} we
derive the description of
$\slcram$ and the surfaces appearing on the boundary from
\cite{alexeev2019stable-pair}, where we solved the analogous problem
for the larger space $\oF_2^\slc$.

\begin{theorem}\label{intro1} The normalization of $\slcram$ is the toroidal compactification
associated to the $\Gamma$-orbit of one chamber, formed from the union of $4$ Coxeter chambers.
\end{theorem}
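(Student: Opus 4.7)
The plan is to leverage the description of $\slcram$ and its boundary stable pairs, derived earlier in the paper from the analogous results for $\oF_2^{\slc}$, and then to compare the resulting fan to the Coxeter chamber structure on $\oC_\Q$. I will invoke the general framework identifying the normalization of a stable-pair compactification with a toroidal compactification whose $\Gamma$-invariant fan is the coarsest common refinement of the decomposition by stable-limit isomorphism type: two interior points of $\oC_\Q$ lie in the same open cone precisely when the Type III Kulikov degenerations they index produce the same stable pair $(X, \epsilon R)$.

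The core of the argument is the combinatorial identification of a single chamber. Type III Kulikov degenerations of elliptic K3 surfaces are encoded by integral-affine structures on $S^2$ with $24$ focus-focus singularities together with a distinguished cut recording the elliptic fibration; the ramification divisor $R \in |3(s+2f)|$ restricts to a canonical trisection on each component of the central fiber. The walls of a single Coxeter chamber of $\Gamma$ are indexed by Vinberg's simple roots of $\II_{1,17}$, and each such root $\alpha$ corresponds to an elementary modification---a flop or $M1$-modification---of the Kulikov model. A wall collapses in the stable-pair fan exactly when this modification preserves $(X,\epsilon R)$ up to isomorphism, so I would classify the simple roots according to whether the induced modification acts trivially on the limit of the ramification divisor.

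The main obstacle will be this explicit wall analysis, together with verifying that the collapsed walls assemble into the correct pattern: the subgroup of the reflection group generated by their reflections should be a Klein four-group acting transitively on the $4$ Coxeter chambers that are to be merged. Once the merged chamber is pinned down and confirmed to be a fundamental domain for an index-$4$ subgroup of the reflection subgroup of $\Gamma$, $\Gamma$-equivariance automatically extends the decomposition to all of $\oC_\Q$, and the theorem follows from the general identification of the normalization of $\slcram$ with the associated toroidal compactification.
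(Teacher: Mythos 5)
Your strategy is viable but it is not the route the paper takes, and the difference is substantial. The paper's proof of Theorem~\ref{intro1} (= Theorem~\ref{thm:ramif-comp}) is a two-line reduction: $F_\el$ sits inside $F_2$ as the unigonal Heegner divisor, the pairs $(X,\epsilon R)$ for the ramification divisor are literally a sub-locus of the degree-2 pairs studied in \cite{alexeev2019stable-pair}, and near the Type III boundary $F_\el$ is cut out by a codimension-one subtorus. So both the slc and the toroidal sides are obtained by \emph{restriction} from the already-proved degree-2 theorem: the Coxeter diagram of $\II_{1,17}$ is the diagram of $\II_{1,18}$ with node 23 and its neighbors deleted, and the generalized Coxeter semifan with irrelevant nodes $18$--$23$ restricts to the one with irrelevant nodes $1$, $19$, i.e.\ to $\cF_\ram$, whose maximal cone is the $W_J=\bZ_2\oplus\bZ_2$-orbit of $P$ (four chambers). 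What you propose instead is to rerun the whole machine from scratch for $F_\el$: build divisor models for every monodromy invariant, do the wall-by-wall analysis of which simple roots $\alpha_i$ induce modifications invisible to the limit of $R$, and identify the collapsed walls as $\alpha_1,\alpha_{19}$. That is essentially the shape of the argument the paper carries out in Section~\ref{sec:rat-curves-divisor} for the \emph{other} polarizing divisor, and it is what \cite{alexeev2019stable-pair} does for $F_2$; it would work here, but it is an order of magnitude more labor than the restriction argument, and it reproves facts already available. What your route buys is self-containedness (no reliance on the degree-2 paper); what the paper's route buys is brevity and the automatic consistency of the two compactifications with their degree-2 ambient versions.

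One caution about your framing of the general principle. The paper does not prove that the normalization is the toroidal compactification for ``the coarsest fan on which the stable type is constant.'' Theorem~\ref{threeinputs} and Corollary~\ref{normalization} require more than the wall-collapse analysis: you must exhibit divisor models for all $\lambda$ (\textbf{(div)}), prove the $d$-semistable deformation statement (\textbf{(d-ss)}), show quasiaffineness of the Type III strata of $\slcram$ (\textbf{(qaff)}), and verify the dimension match (\textbf{(dim)}), which in practice means computing the moduli of the boundary stable pairs and checking surjectivity of the strata maps. Your sketch defers all of this into ``the general identification,'' but for the ramification divisor none of it is established in this paper except by citation to the degree-2 case --- which is exactly why the paper argues by restriction rather than by your direct method. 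If you pursue your route you must supply these inputs, not only the classification of which reflections fix the stable limit.
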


Another natural choice of polarizing divisor is
$R=s+ m \sum_{i=1}^{24}f_i$, where $s$ is the section and $f_i$ are
the 24 singular fibers of the elliptic fibration, counted with
multiplicities. Here, any
$m\ge1$ gives the same result. We denote the stable pair
compactification for this choice by $\slcrc$ where ``rc'' stands
for ``rational curves''.

The reason for this notation is the following. It was observed by Sean Keel
about 15 years ago that for a generic K3 surface $(X,L)$ with a
primitive polarization the sum $R=\sum C_i$ of the singular rational
curves $C_i\in |L_i|$, counted with appropriate multiplicities, is a
canonical polarizing divisor.  Their number $n_d$ is given by the
Yau-Zaslow formula.  Our space $F$ embeds into each $F_{2d}$ with the
class of $L$ equal to $s+(d+1)f$. On such an elliptic K3 surface, each
curve $C_i$ specializes to a sum of the section $s$ and $d+1$ singular
fibers~$f_i$, cf. \cite{bryan2000enumerative-geometry}. It follows
that
\begin{displaymath}
R \equiv n_d\  \big( s + \frac{d+1}{24} \sum_{i=1}^{24} f_i \big),
  \text{ which is proportional to }s+ m\sum_{i=1}^{24} f_i.
\end{displaymath}
Stable surfaces appearing on the boundary of $\slcrc$
were described in \cite{brunyate15modular-compactification}, its
normalization was conjectured to be toroidal, and the hypothetical fan
was described. We prove this conjecture:

\begin{theorem}\label{intro2} The normalization of $\slcrc$ is the toroidal compactification
associated to the $\Gamma$-orbit of a subdivision of the Coxeter chamber into $9$ sub-chambers.
\end{theorem}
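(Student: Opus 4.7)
The plan is to mirror the proof of Theorem~\ref{intro1}, adapted to the finer polarizing divisor $R=s+m\sum_{i=1}^{24}f_i$. The starting point is the explicit classification by Brunyate \cite{brunyate15modular-compactification} of the boundary stable pairs of $\slcrc$: for each Type III degeneration of an elliptic K3 surface, the central fibre carries an integral-affine structure on $S^2$ whose polyhedral decomposition records the limit positions of the section and the $24$ singular fibres. From this data Brunyate extracts the conjectural fan $\mathcal{F}$ in $\oC_\Q$ obtained by subdividing each Coxeter chamber into $9$ sub-chambers. The goal is to verify that $\mathcal{F}$ is indeed the fan governing the normalization.

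Writing $\overline{F_\el}^{\mathcal{F}}$ for the toroidal compactification of $F_\el$ associated to $\mathcal{F}$, the main construction is a morphism $\Phi\colon \overline{F_\el}^{\mathcal{F}} \to (\slcrc)^\nu$. Over each cone $\sigma\in\mathcal{F}$ one chooses a Kulikov--Persson--Pinkham semistable model $\mathcal{X}\to T$ of a one-parameter smoothing whose monodromy vector lies in the relative interior of $\sigma$, and then forms the relative canonical (ample) model of $(\mathcal{X},\epsilon R)$. The combinatorial type of the central fibre is constant on the interior of $\sigma$ by the wall-crossing analysis in \cite{brunyate15modular-compactification}, so these models glue into a flat family of stable pairs over $\overline{F_\el}^{\mathcal{F}}$. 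The induced classifying morphism to $\oP_{2d,n}$ lands in $\slcrc$, and because the source is normal it factors through $(\slcrc)^\nu$.

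To conclude that $\Phi$ is an isomorphism it suffices to check bijectivity on closed points. Surjectivity follows from density of $F_\el$ in both spaces together with properness. For injectivity, a boundary stable pair $(X_0,\epsilon R_0)$ reconstructs its integral-affine data and therefore determines the cone of $\mathcal{F}$ from which it came, so two toroidal strata mapping to the same stable pair must coincide. Being a bijective, proper, birational morphism between normal varieties, $\Phi$ is an isomorphism.

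The hardest step will be the well-definedness of the assignment $\sigma\mapsto(X_0,\epsilon R_0)$ and, crucially, the verification that the walls of $\mathcal{F}$ coincide exactly with the walls at which the canonical model jumps. This amounts to a combinatorial census, on the integral-affine $S^2$, of how the ample model of $K_{\mathcal{X}/T}+\epsilon R$ contracts components as one crosses each internal wall of the Coxeter chamber: one must confirm that precisely $9$ inequivalent combinatorial types arise, with neither further refinement nor coarsening forced by the stable-pair condition.
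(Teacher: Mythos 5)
Your proposal has the right overall shape, but two of its steps are genuinely broken. First, the assertion that the relative canonical models chosen cone-by-cone ``glue into a flat family of stable pairs over'' the toroidal compactification is not something you can extract from one-parameter Kulikov models; it is essentially the entire difficulty, and the paper never constructs such a global family. Instead it takes the birational map $\torrc\dashrightarrow\slcrc$, eliminates indeterminacy, and shows each exceptional fiber $Z_p$ maps to a point. Families of divisor models are produced only over partial toroidal compactifications $\ocM^\lambda$, one ray at a time, via the Friedman--Scattone extension over the $d$-semistable deformation space; to make this work one needs (a) a theorem that every $d$-semistable deformation keeping $s$ and $f$ Cartier still carries the polarizing divisor (Proposition~\ref{dssinput}), and (b) quasi-affineness of the Type III strata of $\slcrc$ (Corollary~\ref{cor:affine}) to force $Z_p\to\slcrc$ to be constant, plus a separate argument on the Type II locus. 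Your sketch contains no substitute for either ingredient, and ``the models glue'' is precisely the claim that needs proving.

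Second, your injectivity argument is false as stated: a boundary stable pair does \emph{not} determine the cone it came from. The paper shows (Remark~\ref{gluedstrata}, Theorem~\ref{thm:map-strata}(1)) that the distinct toroidal strata with end behaviors $Y_2Y_2$ and $Y_2\iY_2$ yield identical stable pairs of type $\bD_0\cdots$, because contracting the end component forgets whether $\la D_1,D_2\ra$ is primitive or imprimitive in its Picard group. So the normalization map genuinely glues strata, and one cannot reconstruct the integral-affine data from the stable limit. The paper instead obtains finiteness from a dimension count on strata (Lemma~\ref{diminput}) combined with surjectivity of each stratum map onto the corresponding moduli of stable pairs (the second half of Proposition~\ref{takestable}, which rests on surjectivity of the period maps), and then invokes Zariski's main theorem. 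Finally, identifying the fan with the $9$-fold subdivision of the Coxeter chamber is not a citation to Brunyate but a computation: one must express the monodromy invariant of each divisor model $X_{LR}(\ell)$ in the coordinates $a_i$, $b_L$, $c_L$, $b_R$, $c_R$ cutting out the cones $\sigma_{LR}$, which the paper carries out with visible surfaces and the Monodromy Theorem (Proposition~\ref{vis-int} through Corollary~\ref{final-monodromy}). Without that step the ``combinatorial census'' you defer to at the end has no bridge to the lattice $\IIell$.
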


Modular compactifications of elliptic surfaces have attracted a lot of
attention recently.  The papers of Ascher-Bejleri
\cite{ascher2017log-canonical, ascher2019moduli-fibered,
  ascher2017moduli-weighted}, using twisted stable maps, construct
compactifications for the moduli spaces of elliptic fibration pairs
$(X\to C, s + \sum a_i g_i)$, where $g_i$ are some fibers, both
singular and nonsingular, and $0 \le a_i \le 1$.  The paper
\cite{ascher2019compact-moduli} considers the case when $X$ is an
elliptic K3 and shows that the moduli space for
$(X, s + \sum_{i=1}^{24} \epsilon f_i)$, where $f_i$ are the singular
fibers, is isomorphic to the normalization of our $\oF^\rc$, although
the stable pairs are different, as
we consider the divisor $\epsilon s +m\epsilon \sum_{i=1}^{24} f_i$.
Inchiostro 
\cite{inchiostro20moduli-weierstrass} considers pairs with
arbitrary coefficients $(X, a_0 s + \sum a_ig_i)$, where $g_i$ are
some fibers, and it includes the case of small $a_0,a_i$.  
We not that when $a_0$ is not small, the underlying surface $X$ may be
only quasi-elliptic, with the contracted section.
The
connection to toroidal compactifications was not considered in the above
papers. 

We also note an interesting recent preprint \cite{odaka2020pl-density}
that appeared after our paper, where our classification of
degenerations of elliptic surfaces into unions of ${\bf ACE}$ surfaces is
explored from a differential geometric viewpoint.

The general approach of this paper continues the program developed in
\cite{engel2018looijenga, engel2021smoothings, alexeev2019stable-pair}
to understand degenerations of (log) Calabi-Yau surfaces via
integral-affine structures on the two-sphere. 
It complements the works of Kontsevich-Soibelman
\cite{kontsevich2006affine-structures} and Gross, Siebert, Hacking,
Keel \cite{gross2003affine-manifolds,
  gross2015mirror-symmetry-for-log, gross2016theta-functions} which
discovered the relevance of integral-affine structures to
understanding mirror symmetry for Calabi-Yau degenerations.

The main new technical tool is explained in Section~\ref{sec:method},
where we give a general criterion for when the normalization of a
stable pair compactification of K3 moduli is toroidal.


The fans of Theorems \ref{intro1} and \ref{intro2} are described in
Section~\ref{sec:fans}. Background on integral-affine structures and
degenerations of K3 surfaces is given in Section \ref{sec:ia-pairs}.
The main theorems are proved in
Sections~\ref{sec:ramification-divisor} and
\ref{sec:rat-curves-divisor}. 
Throughout, we work over $\bC$.

\begin{acknowledgements}
  The first author was partially supported by NSF under DMS-1902157
  and the second author under DMS-1503062.
\end{acknowledgements}

\section{Basic notions}
\label{sec:basics}

We use \cite{alexeev2019stable-pair} as a general reference for many
of the basic definitions and results, including the definition of semi
log canonical (slc) singularities, and define here the most important notions.

\subsection{Models for degenerations of K3 surfaces}
\label{sec:models}

We review several models for degenerations of K3 surfaces and name
them.  For a family $\pi\colon X\to S$ and two line
bundles $L_1,L_2$ on $X$, we write $L_1\simeq_S L_2$ if $L_1\otimes
L_2\inv = \pi^* F$ for some line bundle on $S$.
Below, $C$ is a smooth curve with a point $0$, and $C^*=C\setminus 0$.

\begin{definition}\label{def:kulikov}
  Let $X^*\to C^*$ be a flat family in which every fiber is a
  smooth K3 surface. A \emph{Kulikov model} is a proper analytic completion
  $X\to C$ such that $X$ is smooth, the central fiber
  $X_0$ is a reduced normal crossing divisor, and $K_{X}\sim_C 0$. 
  We say that the Kulikov model is Type I, II, or III depending on whether
  $X_0$ is smooth, has double curves but no triple points, or
  has triple points, respectively. 
\end{definition}

\begin{definition}
  In addition, assume that we have a relatively nef and big line
  bundle $L^*$ on $X^*$. A \emph{nef model} is a Kulikov model
  $X\to C$ with a relatively nef line bundle $L$ extending
  $L^*$.
\end{definition}

\begin{definition}
  Assume that we additionally have an effective divisor
  $R^*\in |L^*|$ not containing any fibers.  A \emph{divisor model} is a
  nef model with an effective divisor $R\in |L|$ extending
  $R^*$, such that $R$ does not contain any strata of $X_0$.
\end{definition}

Given $X^*\to C^*$, a Kulikov model exists by Kulikov
\cite{kulikov1977degenerations-of-k3-surfaces} and Persson-Pinkham
\cite{persson1981degeneration-of-surfaces}, possibly after a finite
ramified base change $(C',0)\to (C,0)$. Given $L^*$, a nef model
exists by Shepherd-Barron
\cite{shepherd-barron1981extending-polarizations}. Given $R^*$, a
divisor model exists by \cite[Thm.2.11,
Rem.2.12]{laza2016ksba-compactification} and \cite[Claim
3.13]{alexeev2019stable-pair}.

Shepherd-Barron also proved that for any $n\ge4$ the sheaf $L^n$
is globally generated.  Thus, the linear system $|L^n|$ for $n\gg0$
defines a contraction $f\colon X\to \oX$ to a normal variety over
$C$ such that $L = f^*(\oL)$ for a relatively ample line bundle
$\oL$ on $\oX$.  Denote $\oR = f(R)$. This is a
Cartier divisor, and $R = f^*(\oR)$. We call the pair $(\oX,
\oR)$ the \emph{stable model of the divisor model $(X,R)$.}
This gives the following:

\begin{theorem}\label{thm:1-param-limit}
  Let $(\oX^*, \oR^*)\to C^*$ be a family of K3 surfaces with ADE
  singularities together with an ample Cartier divisor. Then possibly
  after a finite ramified base change there exists a completion
  $f\colon (\oX,\oR)\to C$ such that
  \begin{enumerate}
  \item The morphism $f$ is Gorenstein and $\omega_\oX \simeq_C
    \cO_\oX$.
  \item $\oR$ is an effective relative Cartier divisor.
  \item For the central fiber $(\oX_0,\oR_0)$, the
    surface $\oX_0$ is a reduced Gorenstein surface with
    $\omega_{\oX_0}\simeq \cO_{\oX_0}$ which has slc singularities.
  \item The divisor $\oR_0$ does not contain the log centers of
    $\oX_0$, and the pair $(\oX_s,\epsilon\oR_s)$ is slc for any
    $0<\epsilon\ll1$ and all $s\in C$.
  \end{enumerate}
  This completion is unique. On each fiber one has
  $H^i(\oX_s,\oL_s)=0$ for $i>0$.
\end{theorem}
\begin{proof}
  After a finite base change $(C',0)\to (C,0)$, there is a
  simultaneous resolution of singularities $X^*\to \oX^*$, so that
  $X^*\to C^*$ is a family of smooth K3s (denoting the new curve
  $C'$ again by $C$ to simplify the notation). By
  \cite[3.13]{alexeev2019stable-pair} possibly after a further finite
  change there exists a divisor model. As above, we take $(\oX,\oR)$
  to be its stable model. It satisfies conditions (1-4) and outside
  the central fiber recovers the original family.

  Uniqueness is a general well known property of families of stable
  slc pairs since it is the relative log canonical model for any
  completion.  The proof of $H^i(\oX_s,\oL_s)=0$ for $i>0$ can be
  found in \cite[p.155]{shepherd-barron1981extending-polarizations} in
  the proof of Theorem 2W.
\end{proof}

We use the terms ``stable pair" or ``stable slc pair" interchangeably
to refer to a pair $(\oX_s,\epsilon\oR_s)$ with slc singularities 
and $K_{\oX_s}+\epsilon \oR_s$ ample.
Some literature uses the term ``KSBA pair."

We also note the following lemma for more general families of divisor
models: 

\begin{lemma}\label{lem:div-to-slc-families} 
  Let $\pi\colon (X,R)\to S$ be a flat family of divisor models over a
  locally Noetherian scheme, $L=\cO_X(R)$. Then $L^n$ for $n\ge4$ is
  relatively globally generated over $S$ and $L^n$ for $n\gg0$ defines
  a contraction $f\colon X\to \oX\to S$ to a flat family of stable
  models $(\oX, \epsilon \oR)$ over $S$, $L=f^*\oL$ and $R=f^*\oR$.
\end{lemma}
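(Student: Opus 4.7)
The plan is to bootstrap the fiberwise statements of Shepherd-Barron \cite{shepherd-barron1981extending-polarizations} and of the previous lemma to an arbitrary locally Noetherian base via cohomology and base change. For each geometric point $s\in S$, the fiber $(X_s,R_s)$ is a divisor model, so $L^n_s=\cO_{X_s}(nR_s)$ is globally generated for $n\ge4$, and for $n\gg0$ the linear system $|nL_s|$ defines the contraction $X_s\to\oX_s$ to the stable model, where $(\oX_s,\epsilon\oR_s)$ is slc and $\oL_s$ is ample. I would then observe that on each fiber the higher cohomology $H^i(X_s,L^n_s)$ vanishes for $i>0$ and $n\gg0$ (Kawamata-Viehweg or Kollár vanishing applied to the slc pair $(\oX_s,\epsilon\oR_s)$, pulled back along $f_s$). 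By the Grauert/cohomology-and-base-change theorem, $\pi_*L^n$ is then locally free of rank equal to the common value $\chi(X_s,L^n_s)$ and its formation commutes with arbitrary base change on $S$.

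Second, fiberwise surjectivity of the evaluation map promotes to a surjection $\pi^*\pi_*L^n\twoheadrightarrow L^n$ for $n\ge4$, giving relative global generation. For $n\gg0$, I would set
\[
\oX \;:=\; \mathrm{Proj}_S\bigoplus_{n\ge 0}\pi_* L^n, \qquad \oL := \cO_{\oX}(1),
\]
with induced morphism $f\colon X\to\oX$ over $S$. By the base-change statement above, the formation of $\oX$ commutes with restriction to fibers, so $\oX_s$ is the stable model of $(X_s,R_s)$ and $f_s$ is the corresponding contraction. In particular $L=f^*\oL$ holds fiberwise and hence globally. The divisor $\oR:=f_*R$ is Cartier because $L=\cO_X(R)=f^*\oL$ and $f$ has connected fibers on each stratum, and $R=f^*\oR$ follows by comparing both sides on fibers.

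Third, I would check that $(\oX,\epsilon\oR)\to S$ is a flat family of stable slc pairs. Flatness of $\oX/S$ follows either from constancy of the Hilbert polynomial of $\oL_s$ (determined by the fixed topological invariants of the K3 model, via $L^2=2d$) combined with the relative $\mathrm{Proj}$ description, or directly from local freeness of $\pi_*L^n$ for $n\gg0$. Flatness of $\oR$ over $S$ then follows from flatness of $\oX$ and the short exact sequence $0\to\oL^{-1}\to\cO_{\oX}\to\cO_{\oR}\to 0$. Fiberwise stability is exactly the previous lemma.

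The main obstacle is ensuring that the construction of $\oX$ and of the contraction $f$ genuinely commutes with base change, so that $\oX_s$ recovers the stable model of the fiber rather than some other birational model or degeneration. This hinges on two points: (i) uniform vanishing of $R^i\pi_*L^n$ for $i>0$ and $n\gg0$, which may require working locally on $S$ and choosing $n$ large enough depending on the (Noetherian) base; and (ii) uniform finite generation of the graded algebra $\bigoplus_n\pi_*L^n$, which again reduces to the fiberwise statement and a noetherian/spreading-out argument. Once these are in place, all the remaining assertions are formal consequences of the fiberwise results.
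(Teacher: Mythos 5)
Your argument is essentially the paper's: the entire proof there consists of citing Shepherd-Barron's vanishing $H^i(X_s,L_s^n)=0$ for $i>0$ and \emph{all} $n\ge 0$ (\cite[Lemma 2.17]{shepherd-barron1981extending-polarizations}), applying Cohomology and Base Change to get $\pi_*L^n\otimes k(s)\isoto H^0(X_s,L_s^n)$, and concluding that $L^n$ for $n\gg0$ defines a contraction restricting fiberwise to the stable model. The one substantive difference is where you get the vanishing: you invoke Kawamata--Viehweg/Koll\'ar-type vanishing on the slc pair $(\oX_s,\epsilon\oR_s)$ for $n\gg0$ and then pull back along $f_s$ (which silently uses $R^if_{s*}\cO_{X_s}=0$ on a non-normal snc surface), whereas the paper takes the vanishing directly on the Kulikov fiber from Shepherd-Barron. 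His statement is uniform in $n\ge0$, which is exactly what (a) justifies the claim of relative global generation already at $n\ge4$ (your $n\gg0$ vanishing only yields the base-change isomorphism, hence global generation, for $n\gg0$), and (b) dissolves the worries you raise at the end about uniformity in $s$ and about finite generation of the section ring --- with uniform vanishing one can simply take the image of $X\to\bP(\pi_*L^n)$ for a single large $n$ and never needs the full relative Proj. So your route works, but citing the right fiberwise vanishing statement makes the last paragraph of your proposal unnecessary.
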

\begin{proof}
  By \cite[Lemma 2.17]{shepherd-barron1981extending-polarizations} for
  every fiber $X_s$ one has $H^i(X_s, L_s^n)=0$ for $n\ge0$ and
  $i>0$. 
  Thus by Cohomology
  and Base Change \cite[III.12.11]{hartshorne1977algebraic-geometry}
  for any $s\in S$ the morphism $\pi_*L^n \otimes k(s) \to
  H^0(X_s,L^n)$ is an isomorphism. Hence, for $n\gg0$ the sheaf $L^n$
  defines a contraction whose restriction to each fiber $X_s$ is the
  contraction given by $|L_s^n|$, to the stable model.
\end{proof}

\subsection{Complete moduli via stable slc pairs}
\label{sec:complete-moduli}

\cite{alexeev2019stable-pair} constructed the stable pair
compactification of the moduli space of K3 surfaces $(\oX, \epsilon
\oR)$ with ADE singularities together 
with an effective ample divisor.  For reader's convenience, we provide
more details of this construction in Theorem~\ref{thm:slc-moduli}. They
are well known to experts but scattered throughout the literature.
Also, our case is significantly easier than the case of general stable
pairs, see Remark~\ref{rem:general-slc-moduli}.

\begin{definition}
  For a positive integer $e$, 
  a \emph{stable $K$-trivial pair} of degree $e$ over an algebraically
  closed field of characteristic $0$ is 
  a pair $(Y,\epsilon B)$ such that
  \begin{enumerate}
  \item $Y$ is a reduced connected projective Gorenstein surface with
    $\omega_Y\simeq\cO_Y$,
  \item $B$ is an effective ample Cartier divisor on $Y$ with $B^2=e$.
  \item Denoting $L=\cO_Y(B)$, 
the Hilbert polynomial $h(n)$ is $\chi(L^{\otimes n})= \frac12 en^2 + 2$.
  \item $Y$ has slc singularities and $B$ does
    not contain any log centers of $Y$. Equivalently, 
    the pair $(Y,\epsilon B)$ is slc for any $0<\epsilon\ll1$.
  \end{enumerate}
\end{definition}

\begin{definition}\label{def:family-stable-k3}
  Let $S$ be a locally Noetherian scheme over $\bC$.  A family of
  stable $K$-trivial pairs of degree $e$ over $S$ is a proper flat
  Gorenstein morphism $f\colon (Y,B)\to S$ such that
  $\omega_{Y/S}\simeq \cO_Y$ locally on $S$, the divisor $B$ is an
  effective relative Cartier divisor and such that every geometric
  fiber is a stable $K$-trivial pair of degree $e$.

  The moduli functor $\ume$ is the contravariant functor from
  the category of locally Noetherian schemes over $\bC$ to the
  category of sets associating to a scheme $S$ the set $\ume(S)$
  of such families modulo isomorphisms over
  $S$.
  
  The moduli stack $\cme$ associates to a scheme $S$ the
  groupoid of sets $\cme(S)$ of such families, in which arrows are
  isomorphisms of families over $S$.
\end{definition}

\begin{theorem}\label{thm:slc-moduli}
  The stack $\cme$ is a Deligne-Mumford stack with finite
  stabilizer
   which has a coarse moduli space $M_e$, an algebraic
  space of finite type over $\bC$. Each proper subspace of $M_e$
  is projective.
\end{theorem}
\begin{proof}
  Following a standard procedure, one has to check that the functor
  $\ume$ is bounded, locally closed or at least constructible,
  separated, and has finite automorphisms. Then the first half of the
  theorem is proved by showing that the stack $\cme$ is the quotient
  stack of an appropriate subscheme of a Hilbert scheme by a group
  action and applying \cite{keel1997quotients-by-groupoids}. The
  projectivity of proper subspaces is the result of
  \cite{kovacs2017projectivity, fujino2018semipositivity-theorems} 
  following the earlier work \cite{kollar1990projectivity-of-complete}.

  (1) \emph{Boundedness}. By
  \cite[Thm. 2.1.2]{kollar1985toward-moduli} the family of polarized
  surfaces with a fixed Hilbert polynomial is bounded. Thus, there
  exists an $m$ such that for any polarized surface $(Y,L)$ with the
  Hilbert polynomial $h(n)=\frac12 en^2+2$ and any $k\ge m$ one has
  that $L^k$ is very ample, $H^i(Y,L^k)=0$ for $i>0$, and $H^0(Y,L^k)$
  generates the graded algebra
  $R(Y,L^k)=\oplus_{d\ge0} H^0(X, L^{dk})$.

  (2) \emph{Local closedness.} Let $f\colon (Y, L)\to S$ be a
  proper flat morphism with a relatively ample line bundle and a
  closed subscheme $B$ given by a compatible collection of sections
  $s_i$ of $L$ on $Y\times_S U_i$ for an open cover $S=\cup U_i$.
  We claim that there exists a locally closed subscheme $T\into S$ such
  that for any $S'\to S$ the base changed family
  $f'\colon (Y,B)\times_S S' \to S'$ is a family of stable
  $K$-trivial pairs of degree $e$ iff the morphism $S'\to S$ factors
  through $T$.

  First of all, the locus in $S$ where the geometric fibers are
  reduced, equidimensional, and Cohen-Macaulay is open in $S$ by
  \cite[IV$_3$, 12.2]{grothendieck1965elements-de-geometrie43}.
  Since the function $h^0(\cO_X)$ is upper semi continuous, the subset
  of $S$ where fibers are connected is also open. We
  shrink $S$ to this open subset. Since the fibers are reduced and
  Cohen-Macaulay, the condition that $B$ is a relative Cartier
  divisor is equivalent to the condition that the fibers of $B\to S$
  are equidimensional. Again, this is an open condition by
  \emph{ibid.}

  Because formation of the relative dualizing sheaf commutes with base
  changes, the Gorenstein property is also open on $S$. Further, the
  property that the two invertible sheaves $\omega_{Y/S}$ and $\cO_Y$
  differ by a line bundle from the base
  is represented by a locally closed subscheme by 
  \cite[Lem.\ 1.19]{viehweg1995quasi-projective-moduli}.  The property
  of having at worst nodal singularities in codimension~$1$ is open as
  well.

  For families satisfying the above conditions, the property of fibers
  $Y$ to have slc singularities is open, cf.
  \cite[2.6]{karu2000minimal-models} and
  \cite[5.5]{kollar1988threefolds-and-deformations}.  One checks it on
  $1$-parameter deformations, i.e. on base changes $C\to S$ with
  $(C,0)$ a regular pointed curve. First, assume that the general
  fiber of $Z=Y\times_S C\to C$ is normal. By Serre's criterion of
  normality, $Z$ is normal in an open neighborhood of $Z_0$. By
  shrinking $C$ we can assume that $Z$ is normal. Assume that
  $Z_0$ is slc. By Inversion of Adjunction
  \cite{kawakita2007inversion-of-adjunction} the pair $(Z,Z_0)$
  is log canonical. Let $\pi\colon\wZ\to Z$ be a log resolution of
  singularities with exceptional divisors $E_i$. One has
  \begin{math}
    K_{\wZ} = \pi^* K_{Z} + \sum a_i E_i
  \end{math}
  with $a_i\ge -1$.
  By shrinking $C$ we can assume that the the images of each $E_i$
  are either $C$ or $0$ and that for $t\ne0$ the map
  $\wZ_s\to Z_s$ is a log resolution of singularities. Then for
  $t\ne0$ one has
  \begin{math}
    K_{\wZ_s} = \pi^* K_{Z_s} + \sum a_i E_i|_{\wZ_s},
  \end{math}
  so $Z_s$ has log canonical singularities. When the general fiber
  of $Z$ is not normal, one considers the normalization
  $({Z}^\nu,D)$ together with the preimage $D$ of the double
  locus. Repeating the same argument, the fibers $({Z}^\nu_s,D_s)$
  are log canonical. One concludes that $Z_s$ are slc by gluing back
  $({Z}^\nu_s,D_s)$ and applying
  \cite[5.38]{kollar2013singularities-of-the-minimal}.

  The same argument shows that the union of the log centers of the
  fibers is a closed subset of $Y$. Then the property that the
  divisor $B_s$ does not contain a log center of $Y$ is open on the
  base. This concludes the proof of local closedness.
  
  (3) \emph{Separatedness.} Each family of $K$-trivial stable pairs
  over a punctured curve $C\setminus 0$ has at most one completion to a
  family over $C$. In a very standard way, this follows from the
  uniqueness of the relative canonical model over $C$.

  (4) \emph{Finite automorphisms.} Again, it is very well known that
  stable slc pairs have finite automorphisms.

  \medskip

  We now give the actual construction. Let $m$ be as in (1). Let
  $H$ be the Hilbert scheme and 
  $$Y_H \subset H \times \bP^{h(m)-1} \times \bP^{h(m+1)-1}$$ be the
  universal family parameterizing closed
  subschemes of $\bP^{h(m)-1} \times \bP^{h(m+1)-1}$ embedded by Segre
  into $\bP^{h(m)h(m+1)-1}$ using $\cO(1,1)$, with the Hilbert
  polynomial our surfaces would have under such embedding. There is an
  open subset $U\subset H$ parameterizing subschemes that map
  isomorphically under both projections $p_1,p_2$ to $\bP^{h(m)-1}$
  and $\bP^{h(m+1)-1}$ and such that the projections have Hilbert
  polynomials $h(mn)$, resp. $h((m+1)n)$.  Over $U$, we have two line
  bundles $L_m=p_1^*\cO(1)$ and $L_{m+1}=p_2^*\cO(1)$. Let
  $U'\into U$ be the locally closed subscheme representing the
  property 
  $L_m^{m+1} \simeq L_{m+1}^m$ locally over the base, it exists by
  \cite[Lem.\ 1.19]{viehweg1995quasi-projective-moduli}.
  Let $L = L_{m+1}\otimes L_m^{-1}$. Then $L_m \simeq L^m$
  and $L_{m+1} \simeq L^{m+1}$. Thus, $L$ is a relatively ample
  line bundle with Hilbert polynomial $h(n)$. 
  
  Let $V\subset U'$ be the open subset over which the fibers satisfy
  $H^i(Y,L)=0$ for $i>0$, using the upper semi continuity of
  $H^i(Y,L)$ in flat families.  Let $\pi\colon Y_V \to V$ be the
  restricted family. By Cohomology and Base Change $\pi_*L$ is a
  locally free sheaf on $V$ of rank $h(n)$. Over $W=\bP_V(\pi_*L)$
  we have a family $(Y_W,B_W)\to W$ of pairs as in (2). By local
  closedness there exists a locally closed subscheme $T\into W$ whose
  fibers are $K$-trivial stable pairs of degree $e$ and all such pairs
  occur.

  The family $(Y_T, B_T) \to T$ is the fine moduli space for the
  families $f\colon (Y,B)\to S$, $L=\cO_Y(B)$ of $K$-trivial
  stable pairs of degree $e$ with two additional pieces of data:
  nondegenerate embeddings $i_m\colon Y \subset S\times \bP^{h(m)-1}$
  and $i_{m+1}\colon Y \subset S\times \bP^{h(m+1)-1}$ with
  $i_m^*\cO(1)\simeq_T L^m$, resp.
  $i_{m+1}^*\cO(1)\simeq_T L^{m+1}$.  Vice versa, any family of
  $K$-trivial stable pairs admits such extra data isomorphisms locally
  in Zariski topology over~$S$. It follows that the stack $\cM_e$ is
  the quotient stack $[T: (\PGL_{h(m)}\times\PGL_{m+1})]$.  We
  complete the proof by applying \cite[1.1,
  1.3]{keel1997quotients-by-groupoids}.
\end{proof}

\begin{corollary}
  Fix $e>0$. Then there exists $\epsilon_0>0$ such that for any
  $\epsilon\in\bQ_{>0}$ with $\epsilon\le\epsilon_0$ and any family
  $f\colon (Y,B)\to S$ of $K$-trivial stable pairs of degree~$e$
  the geometric fibers $(Y,\epsilon B)$ have slc singularities and
  ample $\bQ$-divisor $K_Y+\epsilon B$. Thus the family
  $f\colon (Y,\epsilon B)\to S$ is a family of stable slc pairs.
\end{corollary}
\begin{proof}
  This follows from boundedness of the moduli functor by Noetherian
  induction. Indeed, the scheme $T$ above is of finite type over
  $\bC$.
\end{proof}

\begin{remark} \label{rem:general-slc-moduli} Ours is a fortunate
  situation where the morphisms $Y\to S$ are Gorenstein and the
  divisors $B$ are relative Cartier divisors. In the case of more
  general stable pairs, where $K_Y+B$ is only $\bQ$-Cartier, there are
  significant complications that we are able to avoid completely:

  (1) Boundedness is a highly nontrivial result. For surfaces, it was
  done in \cite{alexeev1994boundedness-and-ksp-2} and for higher
  dimensional pairs in \cite{hacon2018boundedness-moduli}.

  (2) Even in the case of varieties $Y$ with $B=0$, for general
  families formation of the sheaves
  $\omega_Y^{[n]}=(\omega_Y^{\otimes n})^{**}$ does not commute with
  base change.  As a consequence, the definition of the moduli functor
  becomes highly nontrivial, and there are several choices for it. To
  prove that a chosen moduli functor is constructible, one applies the
  theory of \cite{kollar2008hulls-and-husks}.

  (3) For a completed $1$-parameter degeneration $(Y,B)\to (C,0)$
  the Minimal Model Program only guarantees that the divisor
  $K_Y+B$ is $\bQ$-Cartier. If $B$ is not $\bQ$-Cartier then the
  closed subscheme $B_0\subset Y_0$ may have embedded
  components. One then needs to have an appropriate theory in order to
  be able to work with families $(Y,B)$ with divisors $B$ rather than
  closed subschemes $B$.

  Discussing this more general case is beyond the scope of this paper.
\end{remark}

\begin{remark}
  Since below we are only interested in the closure, with reduced
  scheme structure, of the locus of ADE K3 surfaces, an alternative
  way is to work over reduced bases $S$ only and to use the moduli
  functor of pairs defined in \cite{kovacs2017projectivity}.

  We chose
  to work with families over not necessarily reduced bases but the
  resulting coarse moduli space $M_e$ is perhaps not proper.  If one
  proved an analogue of Theorem~\ref{thm:1-param-limit} for
  log Calabi-Yau pairs $(\oX,\Delta + \epsilon \oR)$,
  crucially with a Cartier divisor $\oR$, 
  that would imply that the entire
  connected component containing a point corresponding to a normal K3
  surface is proper.
\end{remark}

\medskip

Now let $F$ be the moduli space of ADE elliptic K3 surfaces
$\pi\colon \oX\to \bP^1$ such that every fiber of $\pi$ is irreducible,
with a section~$s$ and a fiber class $f$.
Such fibrations have a unique Weierstrass model.
$F$ is an 18-dimensional quasiprojective variety.
Suppose that for each such K3 surface we have chosen in some
canonical way an ample divisor $\oR\in |\oL|$ for $\oL$ a polarization in
$\Z s\oplus \Z f$. We will call $\oR$ the
\emph{polarizing divisor}.
Then the pairs $(\oX,\epsilon \oR)$ are automatically $K$-trivial
stable slc pairs. There exists $\epsilon_0$ such that for any
$0<\epsilon\ll1$ the pairs $(\oX,\epsilon \oR)$ are stable slc pairs.

Suppose that $\oL = n(s+(d+1)f)$ for some positive
integers $n,d$, as is always the case in this paper. Let $P_{2d,n}\subset M_e$
be the projective bundle over $F_{2d}$ of sections of $n$ times the
primitive polarization; here $e=2dn^2$. We claim that the morphism
\begin{displaymath} F\to P_{2d,n} \qquad (\oX,\pi,s,f)\mapsto
                                                (\oX,\epsilon\oR) 
\end{displaymath} 
is a closed immersion: First, note that the morphism $F\to F_{2d}$ is set-theoretically injective
because $s$ can be reconstructed as the base locus of $|s+(d+1)f|$, and thus
so can $f$ and $\pi=|f|$. Since $F\to F_{2d}$ is a Heegner divisor, locally cut out
in period coordinates by a hyperplane, the set-theoretic injectivity
implies that $F\to F_{2d}$ is an immersion. Then, the choice of $\oR$
is a section of the projective bundle $P_{2d,n}\big{|}_F\to F$
and hence defines an immersion $F\hookrightarrow P_{2d,n}\big{|}_F\hookrightarrow P_{2d,n}$. 

\begin{definition}\label{def:oFslc}
  For a choice of polarizing divisor $\oR$, denote by $\oF^\slc$
  the closure of $F$ in the moduli $M_e$ of stable slc pairs, taken with the
  reduced scheme structure. \end{definition}
  
$\oF^\slc$ is projective because $F$ embeds in $P_{2d,n}$ and
$\overline{P}_{2d,n}\subset M_e$ is projective by Theorems~\ref{thm:1-param-limit}
and \ref{thm:slc-moduli}.

\begin{definition}
  The compactification for the polarizing
  divisor $\oR=s+m\sum_{i=1}^{24} f_i$ for a fixed $m\ge1$,
  where $s$ is the section and $f_i$ are the singular fibers, which
  may coincide, is denoted by $\slcrc$.  Any $m\ge1$ gives the same
  result.
\end{definition}

Another natural choice is given by the ramification divisor of the
elliptic involution. If $\wX\to\bP^1$ is a Weierstrass fibration
with section $s$, the ramification divisor of the elliptic involution
is a disjoint union of $s$ and the trisection $\wR$ of $2$-torsion
points. One has $s^2=-2$, so the ramification divisor is not nef.
But after contracting the section, one
obtains a nodal surface $\oX$ that is a double cover of
$Y=\bP(1,1,4)$, and the image $\oR$ of $\wR$ is ample. On the
resolutions the class
of $R$
 is $3(s+2f)$ and the morphism to $Y$ is given by the linear
system $|s+2f|$.

Since $(s+2f)^2=2$ these contracted, pseudoelliptic surfaces
are K3 surfaces with degree 2 polarization and $ADE$ singularities.
They are distinguished among generic degree 2 K3 surfaces
because $s$ is contracted. Their moduli
$F$ forms the unigonal divisor in the 
moduli space $F_2$.
The K3 surfaces outside of this divisor maintain an
involution, but are instead double covers
$X\to \bP^2$ ramified in a sextic. The description of the 
compactification for the pairs $(\oX,\epsilon \oR)$ in this case follows
from that of the compactification $\oF_2^\slc$ considered in
\cite{alexeev2019stable-pair}.

\begin{definition}
  Let $\slcram$ denote the compactification of the moduli space of
  pseudoelliptic pairs $(\oX,\epsilon \oR)$ for the choice of polarizing
  divisor $R$ equal to the ramification divisor of the double cover
  $\oX\to\bP(1,1,4)$.
\end{definition}

\subsection{Toroidal compactifications of $F$}
\label{sec:toroidal-comp}

Let $\II_{2,18} = H^2\oplus (-E_8)^2$ be the unique even unimodular
lattice of signature $(2,18)$. Let $O(\II_{2,18})$ be its
isometry group. Define the period domain
\begin{displaymath}
 \mathbb{D} = \{x\in \bP(\II_{2,18}\otimes \bC) \mid x^2=0,\,
  x\cdot \overline{x} > 0 \}.
\end{displaymath}
It consists of two isomorphic connected components, each a bounded Hermitian
symmetric domain of Type IV, naturally interchanged by complex conjugation.
By the Torelli theorem \cite{piateski-shapiro1971torelli}, 
the quotient $\bD/O(\II_{2,18})$ is $F$. It is connected and so we may
as well replace $\mathbb{D}$ with one of its connected components, and instead quotient
by the subgroup $O^+(\II_{2,18})$ preserving this component.

The space $F$ has the Baily-Borel \cite{baily1966compactification-of-arithmetic} compactification $\oF^\bb$
in which the boundary consists of a unique $0$-cusp, a point, and two
$1$-cusps, which are curves. The $0$- and $1$-cusps are in bijection
with $O^+(\II_{2,18})$-orbits of primitive isotropic lattices of ranks $1$ and $2$ respectively.
Let $\delta\in \II_{2,18}$ be a primitive vector with $\delta^2=0$.  Then
$\delta^\perp/\delta \simeq \IIell = H\oplus E_8^2$ is the
unique even unimodular lattice of signature $(1,17)$.

Let $C$ denote a connected
component of the positive norm vectors of $\delta^\perp/\delta\otimes \bR$ and let
$\oC_{\bQ}$ be its rational closure, obtained by adding the rational isotropic rays
on the boundary of $C$. Let $\Gamma={\rm Stab}_\delta/U_\delta\cong O^+(\IIell)$
be the quotient of the stabilizer
${\rm Stab}_\delta\subset O^+(\II_{2,18})$
by its unipotent subgroup $U_\delta$.
It follows from the general theory \cite{ash1975smooth-compactifications}
that a toroidal compactification
$\oF^\cF$ is defined by a $\Gamma$-invariant fan $\mathcal{F}$ with support equal
to $\oC_\bQ$ and finitely many orbits of cones.

The toroidal compactification is described in a neighborhood of the $0$-cusp by
the quotient $X(\mathcal{F})/\Gamma$. By the nilpotent orbit theorem
\cite{schmid1973variation, friedman1986type-III},
one-parameter arcs
approaching the $0$-cusp are approximated by translates of co-characters
of the algebraic torus $\delta^\perp/\delta\otimes \C^*\cong \Hom(\delta^\perp/\delta,\,\C^*)$
modulo $\Gamma$. These co-characters are of the form $\lambda\otimes \C^*$
for some $\lambda\in C \cap \delta^\perp/\delta\textrm{ mod }\Gamma$,
with $\lambda^2>0$.
Similarly, one-parameter arcs
approaching a $1$-cusp are approximated by a co-character associated
to a vector $\lambda \in \oC_\Q\cap \delta^\perp/\delta$
satisfying $\lambda^2=0$.

\begin{definition} We say $\lambda$ is the {\it monodromy invariant}
of an elliptic K3 degeneration $X^*\to C^*$ if a translate of the co-character
$\lambda\otimes \C^*$ approximates the degeneration
of the period map $C^*\to \delta^\perp/\delta\otimes \C^*$. \end{definition}

\section{Proof method for Theorem \ref{intro2}}   
\label{sec:method}

We describe a general method for proving the existence of a morphism 
$$\oF_\bM^\mathcal{F}\to \oF_\bM^\slc$$
from a toroidal compactification to an slc compactification
of the moduli space of $\bM$-lattice polarized K3 surfaces
for
some choice of fan $\mathcal{F}$ and polarizing divisor $R$. Under suitable
circumstances this map is the normalization.
The method was developed in
\cite{alexeev2019stable-pair} in the case of moduli of degree 2 K3 surfaces $F_2$,
but was not phrased as a general theorem.

Consider a moduli space of $\bM$-lattice polarized K3
surfaces. See \cite[Def.~2.33]{alexeev2021compact} for a precise definition.
There is an isomorphism of coarse spaces $F_{\bM}=\bD_\bM/G_\bM$
\cite[Thm.~2.34]{alexeev2021compact} with a Type IV arithmetic quotient.
Suppose that on a generic K3 surface in this moduli we have
chosen, in some canonical way, an effective divisor $R$ in some ample
class $h\in\bM$. The space $\oF_\bM^\slc$ is defined the same way as in
Def.~\ref{def:oFslc}, by taking a closure of $F_\bM\subset M_e$ for $e=R^2$.

For example, for ordinary primitively polarized K3
surfaces $(X,L)$, $L^2=2d$, this means a choice $R\in |nL|$ in some
fixed multiple~$h=nL$ of the generator.
   
\begin{theorem}\label{threeinputs} Let $F_\bM=\mathbb{D}_\bM/G_\bM$
be a moduli space of $\mathbb{M}$-lattice polarized K3 surfaces, and let $R$ be a canonical choice of
polarizing divisor. Suppose we are given the following inputs:
\begin{enumerate} 
\item[\bf (div)] Some divisor model $(X(\lambda),R)$ with
  possibly imprimitive
  monodromy invariant $\lambda$,
for all projective classes
$[\lambda]$ of rational lines in $\oC_\Q\cap \delta^\perp/\delta$, and all $G$-orbits of
primitive isotropic vectors $\delta$.
\item[\bf (d-ss)] A theorem proving that all $d$-semistable (cf. Definition \ref{def:dss}) deformations of
$X_0(\lambda)$ which keep the classes in $\mathbb{M}$ Cartier also admit a deformation
of the divisor $R$, so that the deformed pair is also a divisor model.
\item[\bf (fan)] A fan $\mathcal{F}$ such that the combinatorial type of the stable model
$(\oX_0(\lambda),\epsilon\oR)$ is constant for all $\lambda$ in the interiors of the cones of $\mathcal{F}$.
\item[{\bf (qaff)}] A proof that the Type III strata of $\oF_\bM^\slc$ are quasiaffine.
\end{enumerate}
Then there is a morphism
$\oF_\bM^{\cF}\rightarrow\oF_\bM^{\slc}$
from the toroidal compactification to the stable pair compactification for the divisor $R$,
mapping strata to strata.
\end{theorem}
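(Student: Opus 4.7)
The plan is to assemble the morphism $\overline{\mathcal{M}}^{\cF}\to\overline{\mathcal{M}}^{\slc}$ stratum by stratum on the toroidal side, then glue the pieces using the moduli-theoretic properties of stable pairs. The stratification of $\overline{\mathcal{M}}^{\cF}$ is indexed by $\Gamma$-orbits of cones $\sigma\in\mathcal{F}$; the overall goal is to (i) associate a canonical stable pair to each cone, (ii) thicken this assignment to a flat family of stable pairs over an analytic neighborhood of the corresponding stratum, and (iii) extend across the higher-codimension Type III strata.

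For step (i), fix a cone $\sigma\in\mathcal{F}$ and a $\lambda\in\mathrm{int}(\sigma)$. Input (div) produces a divisor model $(X(\lambda),R)\to C$ with monodromy invariant $\lambda$, and applying Lemma \ref{lem:div-to-slc-families} to its central fiber yields a stable pair $W_\sigma$. By (fan), $W_\sigma$ depends only on $\sigma$ and not on the chosen $\lambda$ in its interior, so it is a well-defined invariant of the $\Gamma$-orbit of $\sigma$ and yields the desired map set-theoretically on the toroidal strata. For step (ii), I would use (d-ss) to thicken this to an analytic neighborhood $U_\sigma$ of the stratum. The key observation is that the versal deformation space of the central fiber of $X(\lambda)$, constrained to keep the $\mathbb{M}$-polarization Cartier, covers such a neighborhood; input (d-ss) guarantees that every such deformation of the surface carries a canonical deformation of $R$, so the deformed pair is again a divisor model. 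Lemma \ref{lem:div-to-slc-families} converts this into a flat family of stable pairs over $U_\sigma$, and the moduli property of $\overline{\mathcal{M}}^{\slc}$ supplies a morphism $U_\sigma\to\overline{\mathcal{M}}^{\slc}$. Separatedness of the stable pair functor, together with the agreement of each $U_\sigma\to\overline{\mathcal{M}}^{\slc}$ with the given embedding $\mathcal{M}\hookrightarrow\overline{\mathcal{M}}^{\slc}$ on the open part, forces these local morphisms to coincide on overlaps. Glueing produces a morphism on an open subset of $\overline{\mathcal{M}}^{\cF}$ whose complement is contained in the union of higher-codimension Type III strata.

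The main obstacle, and where input (qaff) enters in an essential way, is extending this morphism across all of the Type III locus. Properness of $\overline{\mathcal{M}}^{\slc}$ and the valuative criterion supply candidate limits along every DVR, and it remains to show that these candidates assemble into an honest morphism of schemes. Here I would pull back regular functions from an affine cover of the quasiaffine Type III locus of $\overline{\mathcal{M}}^{\slc}$: these become rational functions on $\overline{\mathcal{M}}^{\cF}$ that are regular on the already-defined open set, and by normality of the toroidal compactification they extend regularly across the missing codimension-two-or-higher subscheme, promoting the rational map to a morphism. That the resulting morphism sends strata to strata is automatic from (fan), since the combinatorial type of the stable model is constant on each cone. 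The most delicate point is the codimension bookkeeping in step (iii) and the verification that quasiaffineness of the Type III locus is strong enough to control the torus-orbit structure transverse to each stratum, and this is where I expect the bulk of the technical work to lie.
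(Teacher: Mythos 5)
Your steps (i) and (ii) are broadly consistent with the paper's strategy: the paper also uses \textbf{(div)} plus Friedman--Scattone and \textbf{(d-ss)} to produce divisor models over (covers of) neighborhoods of the boundary in partial toroidal compactifications $\ocM^\lambda$, converts them to stable families via Lemma~\ref{lem:div-to-slc-families}, and uses exactly this to handle the Type~II locus. But your step (iii) has a genuine gap, and it sits precisely where \textbf{(qaff)} is supposed to do its work. A rational map from a normal variety to a proper variety is automatically defined away from codimension two, yet it need \emph{not} extend to a morphism there (the inverse of a blow-up of a point in $\mathbb{P}^2$ is the standard counterexample); normality only lets you extend \emph{regular functions}, not morphisms. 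Your proposed fix --- pulling back regular functions from an affine cover of the quasiaffine Type~III stratum --- does not repair this: those functions are defined only on a locally closed boundary stratum of $\ocM^\slc$, not on a Zariski neighborhood of it in $\ocM^\slc$, so their pullbacks are not rational functions on (an open subset of) $\ocM^\cF$ and Hartogs/normality cannot be applied to them.

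The paper's actual use of \textbf{(qaff)} is a rigidity argument that your proposal is missing. One resolves the indeterminacy $\ocM^\cF \leftarrow Z \to \ocM^\slc$ and looks at a fiber $Z_p$ over a potential indeterminacy point $p$. Every arc $(C,0)\to Z$ landing in $Z_p$ has monodromy invariant in the interior of the cone of $\cF$ indexing the stratum containing $p$; by \textbf{(div)}, \textbf{(d-ss)} and Friedman--Scattone the arc carries a divisor model, and by \textbf{(fan)} its stable limit has a combinatorial type depending only on that cone. Hence the image of the \emph{proper} connected variety $Z_p$ lies inside a single Type~III stratum, which by \textbf{(qaff)} is quasiaffine; a proper connected variety mapping to a quasiaffine variety is constant, so $Z_p$ is contracted and $\varphi$ is in fact regular at $p$. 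This is the step you would need to supply; note also that it fails verbatim for the Type~II strata (which are not quasiaffine, being finite quotients of families of abelian varieties), so those must be treated separately by the direct neighborhood construction --- essentially your step (ii) --- as the paper does in the last paragraph of its proof. Finally, be aware that your step (ii) as stated quietly assumes one can build families of divisor models over neighborhoods of \emph{arbitrarily deep} Type~III strata; the Friedman--Scattone input only provides this along a single ray $\lambda$, which is exactly why the paper works with arcs and rigidity rather than with neighborhoods of deep strata.
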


\begin{proof} Since the interiors are isomorphic, we have a birational
  map
  $\varphi\colon \oF_\bM^{\cF} \dashrightarrow
  \oF_\bM^{\slc}$ between the two moduli
  spaces. Eliminate indeterminacy by
$$\oF_\bM^{\cF}\leftarrow Z\to \oF_\bM^{\slc}.$$
Let $Z_p$ be the fiber of the left-hand map over
$p\in \oF_\bM^{\cF}$. Since $\oF_\bM^\cF$ is normal, if
$\varphi$ is not regular then there exists a $p$ such that the map
$Z_p\to \oF_\bM^\slc$ is non-constant.

Let $(C,0)\to Z$ be an
arbitrary one-parameter family such that $0\mapsto Z_p$.
The curve $(C,0)$ defines some monodromy invariant $\lambda\in \oC_\Q(\delta)/\Gamma$
depending on how it approaches the boundary. Here $\Gamma = {\rm Stab_\delta}/U_\delta$ where
${\rm Stab}_\delta\subset G$ is the stabilizer
of $\delta$. Either $\lambda^2>0$ and $\Z\delta$ corresponds
to the $0$-cusp that $(C,0)$ approaches or $\lambda^2=0$ and $\Z\lambda\oplus \Z\delta$ 
corresponds to the $1$-cusp that $(C,0)$ approaches. Such arcs are respectively
given by Type III or Type II
degenerations. 

Let $\oF_\bM^{\lambda}$ be the toroidal extension of the moduli space
whose only cones are rays in the directions of $\Gamma\lambda$. Then
$\oF_\bM^{\lambda}$ is the union $\mathcal{M}$ with a single divisor $\Delta$
on the boundary. When $\lambda^2>0$, the boundary divisor $\Delta$ is
isomorphic to the ${\rm Stab}_\lambda$-quotient
of a torus of dimension $19-\rk\mathbb{M}$. When $\lambda^2=0$
it is a finite quotient of a family of abelian varieties
isogenous to $\mathcal{E}^{18-\rk\bM}$, the self-fiber product of the universal
family over some modular curve. Let $V_\lambda$ be an analytic neighborhood
of the boundary divisor  $\Delta\subset \oF_\bM^{\lambda}$ and
let $U_\lambda\to V_\lambda$
be a cover branched along $\Delta$ of order the imprimitivity of $\lambda$.

Input {\bf (div)} implies that there is some possibly imprimitive 
$\lambda$ representing $[\lambda]$ which is the monodromy invariant of some
divisor model $(X(\lambda),R)$. When $\rk \mathbb{M}=1$, an important result of
Friedman-Scattone \cite[5.5, 5.6]{friedman1986type-III}
implies that there is a family $\mathfrak{X}_\lambda\to \widetilde{U}_\lambda$
extending the universal family over the $d$-semistable deformation space of
$X_0(\lambda)$ which keep the classes in $\mathbb{M}$
Cartier---here $\widetilde{U}_\lambda$ is a some etale cover 
of $U_\lambda$. The same proof
applies to higher rank polarization.
 
Input {\bf (d-ss)} implies that not just the line bundles in $\mathbb{M}$,
but also the divisor models, extend to produce a family
$(\mathfrak{X}_\lambda,\mathfrak{R}) \to  \widetilde{U}_\lambda$.

Since $C^*\to F_\bM$ is approximated by the cocharacter
$\lambda$, it follows that the period map extends to a morphism
$(C,0)\to \oF_\bM^{\lambda}$. Lifting this arc to the
cover $\widetilde{U}_\lambda$ and restricting
$(\mathfrak{X}_\lambda,\mathfrak{R})$ we get a divisor model
 $(X,R)\to (C,0)$. By
Lemma~\ref{lem:div-to-slc-families}
the stable model of $(X,\epsilon R)$ is $(\oX,\epsilon\oR)$.
Note the choice of lift of the arc doesn't ultimately affect
the resulting stable model.

Following \cite[Thm.~10.5]{alexeev2019stable-pair},
consider an arc in $F_\bM$ limiting a point in $Z_p$. While $p$
does not determine the monodromy invariant $\lambda$ of this arc,
we necessarily have that $\lambda$ lies in the interior of the cone corresponding to the 
boundary stratum of $\oF_\bM^\mathcal{F}$ containing $p$.

 Input {\bf (fan)} allows us to
conclude: For all arcs $(C,0)$ approaching
a point in $Z_p$ the stable model $(\oX,\epsilon \oR)\to (C,0)$ has a fixed combinatorial type.

Thus, the image of the morphism $Z_p\to \oF_\bM^{\slc}$ lies in a fixed
boundary stratum of the stable pair compactification. By {\bf (qaff)}, for
Type III degenerations, these strata are quasiaffine. Since $Z_p$ is proper, we conclude that this
morphism is constant if $p$ lies in the Type III locus.
This is a contradiction, so $\varphi$ is regular at $p$.

Finally, it remains to show that there is no indeterminacy in the Type II locus. Any
fan $\mathcal{F}$ contains the Type II isotropic rays as one-dimensional cones,
and $\oF_\bM^\lambda\subset \oF_\bM^\cF$ is an open subset.
Consider again the family $(\mathfrak{X}_\lambda,\mathfrak{R}) \to  \widetilde{U}_\lambda$.
Taking the relative proj of $n\mathfrak{R}$ gives a family of stable models
$(\overline{\mathfrak{X}}_\lambda,\epsilon\overline{\mathfrak{R}}) \to  \widetilde{U}_\lambda$ and the classifying
morphism $\widetilde{U}_\lambda\to \oF_\bM^\slc$ must factor through $V_\lambda$ because the fibers of
$ \widetilde{U}_\lambda\to V_\lambda$ lying the smooth locus give the smooth K3 surface with divisor.
The theorem follows. \end{proof} 

\begin{corollary}\label{normalization} Suppose that in addition,
\begin{enumerate}
\item[{\bf (dim)}] Any stratum in $\oF_\bM^{\cF}$ and its image in $\oF_\bM^{\slc}$ have the same dimension.
\end{enumerate}
Then
$\oF_\bM^{\cF}$ is the normalization of $\oF_\bM^{\slc}$. \end{corollary}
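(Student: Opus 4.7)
The plan is to deduce from $(\mathbf{dim})$ that the morphism $\pi\colon \ocM^\cF\to \ocM^\slc$ produced by the theorem is the normalization, by upgrading its properness and stratum-preservation to finite-fiberness.

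First, since $\ocM^\cF$ is normal (as a toroidal compactification of an arithmetic quotient), and $\pi$ is proper and birational, the universal property of normalization provides a factorization $\pi = \nu\circ\tilde\pi$, where $\nu\colon \widetilde{\ocM}^\slc\to \ocM^\slc$ is the normalization and $\tilde\pi\colon \ocM^\cF\to \widetilde{\ocM}^\slc$ is proper and birational onto a normal variety. By Zariski's Main Theorem, $\tilde\pi$ is an isomorphism if and only if it has finite fibers: proper plus quasi-finite forces finite, and a finite birational morphism between normal varieties is an isomorphism. The task thus reduces to verifying that $\pi$ itself has finite fibers.

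Fix $q\in \ocM^\slc$, lying in a boundary stratum $T$. Every $p\in \pi^{-1}(q)$ lies in a stratum $S$ of $\ocM^\cF$ with $\pi(S)=T$ since $\pi$ sends strata to strata. By $(\mathbf{dim})$, $\dim S=\dim T$, so the restriction $\pi|_S\colon S\to T$ is a dominant morphism between irreducible varieties of equal dimension, and in particular generically finite. Only finitely many strata $S$ can map to $T$, since the fan $\cF$ has finitely many $\Gamma$-orbits of cones.

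The main obstacle is upgrading generic finiteness of each $\pi|_S$ to finiteness of the full fiber $\pi^{-1}(q)$. Here I would invoke the rigidity of the constructions of $(\mathbf{div})$ and $(\mathbf{fan})$: a toroidal stratum $S$ associated to a cone $\sigma$ is a finite quotient of a torus (Type III) or of a family of abelian varieties (Type II), whose points record a monodromy cocharacter $\lambda\in\sigma^\circ$ modulo $\operatorname{Stab}_\sigma$ together with periodic phase data, and the stable model $(\overline{X}(\lambda),\epsilon\overline{R})$ supplied by $(\mathbf{div})$ together with the combinatorial rigidity of $(\mathbf{fan})$ recovers this data up to at most the finite ambiguity of $\operatorname{Stab}_\sigma$. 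Hence $\pi|_S$ is quasi-finite on all of $S$, not merely generically, and $\pi^{-1}(q)$ is finite.

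Once $\pi$ is quasi-finite and proper, it is finite; $\tilde\pi$ is then a finite birational morphism between normal varieties, hence an isomorphism. This identifies $\ocM^\cF$ with the normalization of $\ocM^\slc$, as claimed.
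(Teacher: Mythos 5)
Your skeleton is exactly the paper's: the proof in the text is, in its entirety, ``the condition implies that the morphism is finite; since $\ocM^\cF$ is normal, we conclude by Zariski's main theorem that the morphism is the normalization.'' Your first and last paragraphs, reducing the corollary to quasi-finiteness of $\pi$ and then applying ZMT to the factorization through the normalization, carry out the same reduction correctly.

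The issue is your third paragraph, which is where the actual content lives. The passage from ``$\pi|_S$ is generically finite'' to ``$\pi|_S$ is quasi-finite everywhere'' is genuinely nontrivial --- a dominant morphism between irreducible varieties of equal dimension can perfectly well have positive-dimensional special fibers --- and the justification you offer does not close it. Inputs {\bf (div)} and {\bf (fan)} assert, respectively, the \emph{existence} of a divisor model for each monodromy invariant and the \emph{constancy of the combinatorial type} of the stable model on the interior of each cone; neither says that the stable pair determines the point of the toroidal stratum up to the finite ambiguity of ${\rm Stab}_\sigma$. That last assertion is precisely the quasi-finiteness you are trying to establish, so as written the step is circular. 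What actually closes the gap is the structure of the stratum maps coming out of the proof of Theorem~\ref{threeinputs}: a Type III stratum of $\ocM^\cF$ is a finite quotient of the torus $\Hom(\sigma^\perp\cap\IIell,\bC^*)$, the corresponding stable-pair stratum is a finite quotient of a torus $T=\Hom(\Lambda,\bC^*)$ (Theorem~\ref{thm:moduli-slc-pair}), and the map between them is induced by the period map, i.e.\ by a homomorphism of algebraic tori dual to a lattice embedding (Theorem~\ref{thm:map-strata}); similarly for Type II with families of abelian varieties (Theorem~\ref{thm:map-strata-type2}). A dominant homomorphism of tori (or abelian varieties) of equal dimension is an isogeny, hence finite, and by homogeneity there are no jumping fibers; this is what lets {\bf (dim)} force each stratum map, and hence $\pi$, to be finite. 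With that substitution in place of the appeal to ``rigidity,'' your argument is complete.
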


\begin{proof} The condition implies that the morphism from Theorem \ref{threeinputs} is finite. Since $\oF_\bM^{\cF}$ is normal,
we conclude by Zariski's main theorem that the morphism is the normalization. \end{proof}

\section{Three toroidal compactifications}
\label{sec:fans}

We now define three fans $\cF_\ram$,
$\cF_\cox$, $\cF_\rc$. Each
successively refines the previous. They are named the
{\it ramification fan}, {\it Coxeter fan}, and {\it rational curve fan}
respectively. These fans give three toroidal
compactifications of $F$ and our main theorem is that
the outer two are the normalizations of the 
compactifications $\oF^{\ram}$ and $\oF^{\rc}$
via stable slc pairs for the ramification divisor and
the rational curve (i.e. $s+m\sum_{i=1}^{24} f_i$)
divisor, respectively. The Coxeter fan is auxiliary.

\subsection{The Coxeter fan}
\label{sec:coxeter-fan}

The group $\Gamma=O^+(\IIell)$ contains the Weyl group
$W$ generated by reflections in the
roots, the $(-2)$-vectors $\alpha\in\IIell$. 
The Coxeter diagram $G_\cox$
of $W$ is well known and given in Fig.~\ref{fig:coxeter}. The nodes
correspond to a choice of simple roots $\alpha_1, \dotsc, \alpha_{19}$, so
that a fundamental domain for $W$-action is the positive chamber
\begin{math}
  P=\{\lambda\in \oC_\bQ\mid \lambda\cdot \alpha_i \ge0\}
\end{math}
with 19 facets.

\begin{figure}[htp]
  \centering
  \includegraphics[width=.8\textwidth]{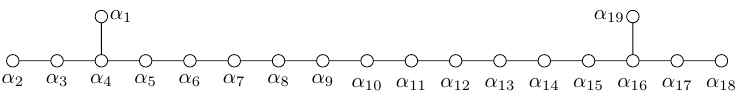}
  \caption{Coxeter diagram $G_\cox$ of $\IIell$}
  \label{fig:coxeter}
\end{figure}

One has $\alpha_i^2=-2$, $\alpha_i\cdot\alpha_j=1$ if the corresponding
nodes of the Coxeter diagram are
connected by an edge and~$0$ otherwise. Since $\IIell$
has rank $18$ there is a unique linear relation amongst the $19$ roots
$\alpha_i$:

\begin{equation}\label{eq:coxeter}
  3\alpha_1 + 2\alpha_2 + 4\alpha_3 +
  \sum_{k=4}^{16} (10-k) \alpha_k
  -4\alpha_{17} - 2\alpha_{18} - 3\alpha_{19} = 0
\end{equation}

\begin{definition}
  The Coxeter fan $\cF_\cox$ is defined by cutting the cone $\oC_\bQ$
  by the mirrors $\alpha^\perp$ to the roots. 
\end{definition}

Since $W$ is a reflection group,
the (orbits of) cones $\cF_\cox/W$ are in a bijection with faces of~$P$.
The group $\Gamma$ is an extension of $W$ by $\Aut G_\cox=\bZ_2$.
Thus, the cones in $\cF_\cox/\Gamma$ are in a bijection with faces of $P$
modulo the left-right symmetry.

By \cite[Thm.3.3]{vinberg1973some-arithmetic}, the nonzero faces of
$P$ are of two types. Type $\II$ rays corresponding to maximal
parabolic subdiagrams of $G_\cox$: maximal disjoint unions of
the affine
Dynkin diagrams. Type $\III$ cones of dimension
$18-r$ correspond to elliptic subdiagrams of $G_\cox$: disjoint
unions of Dynkin diagrams with
$0\le r\le17$ vertices. A subset $I\subset G_\cox$ of vertices
corresponds to the face $\cap_{i\in I} \alpha_i^\perp\cap P$.

The two type $\II$ rays correspond to the maximal parabolic subdiagrams
$\wE_8\wE_8$ and $\wD_{16}$. Similarly, one can count the 80 type
$\III$ rays and count the higher-dimensional faces. In our special case,
however, there is an easier way.

\begin{lemma}\label{lem:faces}
  Suppose that an $18$-dimensional cone $P$ is defined by 19
  inequalities $a_i\ge0$ and that the linear forms $a_i$ satisfy a
  unique linear relation
  $\sum_{i=1}^{9} n_ia_i=\sum_{i=11}^{19}m_ia_i$, with $n_i>0$,
  $m_i>0$.  Then the faces of $P$ are in a bijection with arbitrary
  subsets $I\subset\{1,\dotsc,19\}$ satisfying a single condition:
  $\{1,\dotsc,9\}\subset I$ $\iff$ $\{11,\dotsc,19\}\subset I$. 
  A subset $I$ corresponds to the face $\cap_{i\in I} \{a_i=0\}\cap P$.
  For $I$ not containing $\{1,\dotsc,9\}$ $\codim F = |I|$, for
  those that do $\codim F = |I|-1$.
\end{lemma}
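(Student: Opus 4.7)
The plan is to prove the bijection in two halves---necessity (non-valid $I$ collapse to valid ones) and sufficiency (each valid $I$ yields a face of the stated codimension)---and then combine them. The rank calculation is immediate: since the unique relation involves all $a_i$ except $a_{10}$ and spans a one-dimensional relation space, any $\le 17$ of the forms are linearly independent, and the only linearly dependent $18$-element subset is $\{a_i:i\ne 10\}$. Thus $\codim H_I=|I|$ when $I\not\supset\{1,\dots,9\}\cup\{11,\dots,19\}$, where $H_I:=\bigcap_{i\in I}\{a_i=0\}$, and $\codim H_I=|I|-1$ otherwise. For necessity, if $\{1,\dots,9\}\subset I$ but $\{11,\dots,19\}\not\subset I$, then evaluating the relation on $F_I$ yields $\sum_{i=11}^{19}m_i a_i=\sum_{i=1}^9 n_i a_i=0$, and the positivity of the $m_i$ together with $a_j\ge 0$ on $P$ forces $a_i\equiv 0$ on $F_I$ for all $i\in\{11,\dots,19\}$. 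Hence $F_I=F_{I\cup\{11,\dots,19\}}$; the other case is symmetric.

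The main technical step is sufficiency: for valid $I$, I need to produce $\lambda\in H_I$ with $a_j(\lambda)>0$ for every $j\notin I$. By Gordan's theorem this is equivalent to the non-existence of a nonzero identity $\sum_{j\notin I}\nu_j a_j=\sum_{i\in I}\mu_i a_i$ with all $\nu_j\ge 0$. Any such identity is a linear relation on the $19$ forms and so, by uniqueness, equals a scalar multiple $\kappa$ of $\sum_{k=1}^9 n_k a_k-\sum_{k=11}^{19}m_k a_k$. Comparing coefficients: $\kappa>0$ forces $\nu_k=-\kappa m_k<0$ for every $k\in\{11,\dots,19\}\cap I^c$, contradicting $\nu_k\ge 0$ unless $\{11,\dots,19\}\subset I$; $\kappa<0$ symmetrically forces $\{1,\dots,9\}\subset I$; and $\kappa=0$ makes the identity trivial because $a_{10}$ is independent of the other forms and the $n_k,m_k$ are nonzero. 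For valid $I$ all three cases are excluded, yielding the desired $\lambda$.

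Finally, the $\lambda$ produced above certifies $\{i:a_i|_{F_I}\equiv 0\}=I$, so distinct valid $I$ give distinct faces; conversely, every face $F$ of $P$ equals $F_{I(F)}$ for $I(F):=\{i:a_i|_F\equiv 0\}$, and the necessity argument shows $I(F)$ is valid. This completes the bijection, with codimensions given by the opening rank calculation. The main obstacle I expect is the Gordan-duality step and its coefficient case analysis, which relies crucially on both the uniqueness of the relation and the fact that the relation omits $a_{10}$.
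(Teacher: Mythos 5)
The paper states this lemma without proof, so there is no argument of the authors' to compare against; your proposal supplies a complete elementary proof, and it is essentially correct. The three ingredients --- the observation that a subset of the forms is linearly dependent only if it contains the full support $\{1,\dots,9,11,\dots,19\}$ of the unique relation (which gives the codimension count), the positivity argument showing that an invalid $I$ cuts out the same face as its ``completion'' (and hence that $I(F)=\{i:a_i|_F\equiv 0\}$ is always valid), and the Gordan/Motzkin duality reducing nonemptiness of the relative interior of $F_I$ to a sign analysis of the unique relation --- are all sound and fit together into a correct bijection with the stated codimensions. One sentence does need repair: ``for valid $I$ all three cases are excluded'' is not literally true when $I\supseteq\{1,\dots,9\}\cup\{11,\dots,19\}$, i.e.\ when $I^c\subseteq\{10\}$; these are valid subsets (they give the vertex of $P$ and the ray on whose relative interior only $a_{10}$ is positive), and for them the cases $\kappa\neq 0$ are not ruled out by your sign argument, since $\{11,\dots,19\}\subset I$ does hold there. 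What saves the conclusion is that then every $j\notin I$ equals $10$, and the unique relation has zero coefficient on $a_{10}$, so $\nu_j=\kappa\cdot 0=0$ for all $j\notin I$ and the putative obstruction is not a genuine one. Adding that line completes the argument. Relatedly, you should make explicit that ``nonzero identity'' in the duality step means that the vector $(\nu_j)_{j\notin I}$ is nonzero --- a relation supported entirely on $\{a_i:i\in I\}$ is no obstruction --- which is exactly the reading your case analysis presupposes and is needed so that the criterion does not spuriously fail for $I=\{1,\dots,9,11,\dots,19\}$.
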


 \begin{proof}
   A face of $P$ is obtained by intersecting $P$ with some hyperplanes
   $a_i=0$.  Each point of $P$ gives a decomposition
   $I\sqcup I^c = \{1,\dotsc,19\}$ with $a_i=0$ for $i\in I$ and
   $a_j>0$ for $j\in I^c$. Obviously, $I$ must satisfy the above
   condition and, vice versa, for any such $I$ there exists a solution
   $(a_1,\dotsc,a_{19})$. 
 \end{proof}

\begin{corollary}
  In $\cF_\cox/W$ there are $2\cdot 9+1=19$ facets and $9^2+1=82$
  rays. In $\cF_\cox/\Gamma$ there are $9+1=10$ facets and
  $\frac{9\cdot 10}{2}+1=46$ rays. The total number of cones in
  $\cF_\cox/W$ is $2N^2+2$ and in $\cF^\cox/\Gamma$ it is $N^2+N+2$, where
  $N=2^9-1$. 
\end{corollary}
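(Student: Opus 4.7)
The plan is to apply Lemma~\ref{lem:faces} to reduce everything to a combinatorial count on subsets of $\{1,\dotsc,19\}$, and then to pass from the $W$-quotient to the $\Gamma$-quotient by Burnside. From relation~\eqref{eq:coxeter} one reads off that nine roots appear with positive coefficient (index set $L=\{1,\dotsc,9\}$), nine with negative coefficient (index set $R=\{11,\dotsc,19\}$), and that $\alpha_{10}$ does not appear, so the hypothesis of Lemma~\ref{lem:faces} is exactly met. Since the relation is unique up to scalar, the nontrivial element of $\Gamma/W\cong\Aut G_\cox\cong\bZ_2$ acts on the simple roots by an involution $\sigma$ that necessarily reverses the signs in \eqref{eq:coxeter}, hence swaps $L\leftrightarrow R$ and fixes $\alpha_{10}$.

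First I would count facets. By Lemma~\ref{lem:faces} a codimension-$1$ face corresponds either to $|I|=1$ with $L\not\subset I$ (automatic since $1<9$), or to $|I|=2$ with $L\subset I$ (impossible since $|L|=9$). So the $19$ singletons account for all facets of $\cF_\cox/W$, split as $9+9+1$ according to membership in $L$, $R$, and $\{\alpha_{10}\}$, giving $2\cdot9+1=19$. Under $\sigma$ the two blocks of nine pair up and $\{\alpha_{10}\}$ is fixed, leaving $9+1=10$ facets in $\cF_\cox/\Gamma$.

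Next I would count rays (codimension $17$). Either $|I|=17$ with $L\not\subset I$: then exactly two indices are missing, and admissibility (together with its consequence $R\not\subset I$) forces exactly one missing index in $L$ and one in $R$, giving $9\cdot9=81$; or $|I|=18$ with $L\subset I$, which forces $R\subset I$ and hence $I=L\cup R$, contributing one extra ray. So $\cF_\cox/W$ has $9^2+1=82$ rays. The $\sigma$-action on the $81$ pairs $(i,j)\in L\times R$ has $9$ fixed orbits, namely $\{i,\sigma(i)\}$ for $i\in L$, and $(81-9)/2=36$ free orbits of size $2$, giving $45$ orbits; adding the fixed ray $L\cup R$ yields $\tfrac{9\cdot10}{2}+1=46$ rays in $\cF_\cox/\Gamma$.

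Finally, for the total count I would stratify admissible $I$ by whether $L\subset I$ (equivalently $R\subset I$). If neither is contained, pick a proper subset of $L$, a proper subset of $R$, and decide independently whether $\alpha_{10}\in I$: this gives $(2^9-1)^2\cdot 2=2N^2$. If both are contained, then $L\cup R\subset I$ and only the status of $\alpha_{10}$ is free: $2$ more. Hence $\cF_\cox/W$ has $2N^2+2$ cones. Both strata are $\sigma$-stable since $\sigma(L)=R$, so Burnside applies stratum-by-stratum: a $\sigma$-fixed admissible $I$ is determined by $I\cap L$ and by whether $\alpha_{10}\in I$, with the constraint $I\cap L\neq L$ in the first stratum, yielding $(2^9-1)\cdot 2+2=2N+2$ fixed cones. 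Thus $|\cF_\cox/\Gamma|=(2N^2+2+2N+2)/2=N^2+N+2$. The only real obstacle is correctly identifying the $\Gamma/W$-action with $\sigma$; after that the argument is pure bookkeeping with Lemma~\ref{lem:faces} and Burnside.
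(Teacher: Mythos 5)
Your proof is correct and is exactly the derivation the paper intends: the corollary is stated without proof as a direct consequence of Lemma~\ref{lem:faces}, and your counts (facets, rays, total faces, and the Burnside passage to $\cF_\cox/\Gamma$ using the diagram involution that swaps $\{1,\dotsc,9\}$ with $\{11,\dotsc,19\}$ and fixes $\alpha_{10}$) all match. The only cosmetic remark is that the identification of the $\Gamma/W$-action with the left--right flip is asserted explicitly by the paper itself (it is the unique nontrivial automorphism of $G_\cox$), so your slightly indirect justification via the sign of the linear relation is not needed.
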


 \begin{proof}
For $\cF_\cox/W$, this follows from counting subsets $I$ satisfying the condition
of Lemma \ref{lem:faces}.
The cones in $\cF_\cox/\Gamma$ biject with involution orbits of such subsets.
 \end{proof}

\subsection{The ramification fan}
\label{sec:coxeter-semifan}

\begin{definition}
  The ramification fan $\cF_\ram$ is defined as a \emph{coarsening} of
  $\cF_\cox$. The unique $18$-dimensional cone is a union of four
  chambers $P_\ram = \cup_{g\in W_J} g(P)$ of $\cF_\cox$, where
  $W_J = \bZ_2\oplus\bZ_2$ is the subgroup of $W$ generated by 
  reflections in the roots $\alpha_1,\alpha_{19}$. The other maximal
  cones of $\cF_\ram$ are the images $g(P_\ram)$ for $g\in W$.

  The corresponding toroidal compactification of $F$ is denoted
  $\torram$.
\end{definition}

This is a special case of a generalized Coxeter semifan defined in
\cite[Sec. 10C]{alexeev2019stable-pair}, where its main properties are
described. The data for a generalized Coxeter semifan is a subdivision
$I\sqcup J$ of the nodes of $G_\cox$ into \emph{relevant} and
\emph{irrelevant} roots.  The maximal cones are the unions of the
chambers $g(P)$ with $g\in W_J$, the subgroup generated by the
reflections in the irrelevant roots, in this case
$\alpha_1,\alpha_{19}$. In general, the subgroup $W_J$ may be infinite
and the resulting cones may not be finitely generated.  In the present
case the group $W_J$ is finite, and so $\cF_\ram$ is an ordinary fan.

The cones of $\cF_\ram/W$ are in a bijection with the subdiagrams of
$G_\cox$ which do not have connected components consisting of the
irrelevant nodes $\alpha_1$ and $\alpha_{19}$.  The cones in
$\cF_\ram/\Gamma$ are in a bijection with orbits of these under
$\Aut G_\cox=\bZ_2$.  In $\cF_\ram/W$ there are 17 facets and 63 rays, and
in $\cF_\ram/\Gamma$ 9 facets and 35 rays.

 \subsection{The rational curve fan}
\label{sec:rc-fan}

Define the vectors
\begin{displaymath}
  \beta_L = \alpha_3+2\alpha_2-\alpha_1,\ 
  \gamma_L = \alpha_3-\alpha_1, 
  \qquad
  \beta_R = \alpha_{17}+2\alpha_{18}-\alpha_{19},\ 
  \gamma_R = \alpha_{17}-\alpha_{19}.
\end{displaymath}

The fan $\cF_\rc$ is a \emph{refinement} of the Coxeter fan, obtained
by subdividing the chamber $P$ by the hyperplanes $\beta_L^\perp$,
$\gamma_L^\perp$, $\beta_R^\perp$, $\gamma_R^\perp$ 
into $3\cdot 3 = 9$
maximal-dimensional subcones $\sigma_{LR}$ with  left and right ends
$L,R\in \{1,2,3\}$. The other maximal-dimensional cones of
$\cF_\rc$ are the $W$-reflections of these cones. The involution in
$\Aut G_\cox$ acts by exchanging $L$ and $R$. Thus, modulo $\Gamma$ there
are 6 maximal cones $\sigma_{11}$, $\sigma_{12}$, $\sigma_{13}$, $\sigma_{22}$,
$\sigma_{23}$, $\sigma_{33}$.


The subdivisions on the left
and right sides work the same way and independently of each
other. So we only explain the left side,
writing simply $\beta,\gamma$ for $\beta_L,\gamma_L$.
Since $\gamma = \beta - 2\alpha_2$ and $\alpha_2\ge0$ on $P$, $\beta\le 0$ implies
$\gamma\le0$, and $\gamma\ge0$ implies $\beta\ge0$. Thus, the hyperplanes
$\beta^\perp$ and $\gamma^\perp$ divide $P$ into three maximal
cones.
Fig.~\ref{fig:cones} gives a pictorial description of the
subdivision and the vectors involved. One has $\beta^2=-8$ and
$\gamma^2=-4$. The number of edges indicate the intersection numbers,
and negative numbers are shown by dashed lines.
In addition, not shown is $\beta\cdot\alpha_1=2$.

\begin{figure}[htp]
  \centering
  \includegraphics[width=.75\linewidth]{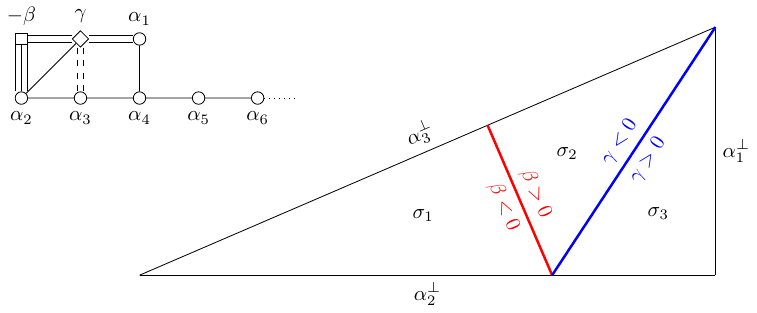}
  \caption{Subdivision of Coxeter chamber for the fan $\cF_\rc$}
  \label{fig:cones}
\end{figure}

These 
three maximal cones have 19 facets
and the vectors defining the facets satisfy a unique linear
relation: \vspace{5pt}

\begin{equation}
  \label{eq:lin-relations}
\begin{tabular}[h]{llcccccccc}
  $L=1:$ &$-\beta\ge0$  &\quad &$3(-\beta)$  &$+$ &$8\alpha_2$ &$+$ &$7\alpha_3$ &$+$ &$\dotsc =0$ \\
  $L=2:$ &$\beta\ge0$, $-\gamma\ge0$ &\quad &$\beta$ &$+$ &$4(-\gamma)$ &$+$ &$7\alpha_3$ &$+$ &$\dotsc = 0$ \\
  $L=3:$ &$\gamma\ge0$ &\quad &$2\alpha_2$ &$+$ &$4\gamma$  &$+$ &$7\alpha_1$ &$+$ &$\dots = 0$
\end{tabular}
\end{equation}

Here, the rest of each relation is $6\alpha_4 + 5\alpha_5+\dotsb$, the same as in
equation~\eqref{eq:coxeter} for the Coxeter chamber. Similarly, we
have a subdivision into 3 cones using the hyperplanes $\beta_R^\perp$ and
$\gamma^\perp_R$. Each of the resulting 9 cones $\sigma_{LR}$ has $19$ facets,
with the supporting linear functions satisfying a unique linear
relation. For every cone the relation has the same pattern of
signs. One concludes that each of the 9 cones is $\bQ$-linearly
equivalent to the Coxeter chamber, and Lemma~\ref{lem:faces} gives a
description of its faces.

For convenience
define $\sigma_L = \cup_{R\in\{1,2,3\}} \sigma_{LR}$,
which specifies only the left-end behavior.
The cones $\sigma_2$ and $\sigma_3$ are related by a reflection
$w$ in the $(-4)$-vector $\gamma$. Indeed, $w(\beta)=2\alpha_2$,
$w(\alpha_3)=\alpha_1$, and $w(\alpha_i)=\alpha_i$ for
$i\ge4$. However, this reflection does not preserve the lattice
$\IIell$. For example, $\beta$ is primitive and $2\alpha_2$ is
2-divisible.

There are $1+5+7+3=16$ cones of dimension $0\le d\le 3$ in
Fig.~\ref{fig:cones}. Therefore, in $\cF_\rc/W$ there are $3^2=9$
maximal cones, $2(7+6)+1 = 27$ facets, $(5+6)^2+1=122$ rays, and
a total of $2N^2+2$ cones, $N=16\cdot 2^6-1$. In $\cF_\rc/\Gamma$ there
are $\frac{3\cdot 4}{2}=6$ maximal cones, $7+6+1 = 14$ facets,
$\frac{11\cdot 12}{2}+1=67$ rays, and $N^2+N+2$ cones.

\begin{definition}
  The toroidal compactification corresponding to the fan $\cF_\rc$
  is denoted $\torrc$.
\end{definition}

Since the fan $\cF_\rc$ is very important for this paper, we describe
it in more detail and give each cone a unique $ADE$ label.  First, to
each maximal cone $\sigma_L$ we associate a Coxeter diagram whose
vertices correspond to the facets $v^\perp$ with $v\ge0$ on
$\sigma_L$.  Then a face $F$ of $\sigma_L$ is described by a
subdiagram of black vertices for the vectors $v$ such that
$F\subset v^\perp$.  In Table~\ref{tab:rc-cones3} we list several
cones of codimension $0,1,2,3$.  For a cone lying in more than one of
the maximal cones $\sigma_1,\sigma_2,\sigma_3$, we can choose either
of them to describe $F$, and we indicate our choice in bold in the
first column.

For each cone, we also indicate which other 
linear functions $\alpha_i,\beta,\gamma$ vanish on it. Namely, on the
cone $\sigma_2\cap \sigma_3 = \gamma^\perp$ one has
$\alpha_1=\alpha_3$ and $\beta/2=\alpha_2$, so once one of them
vanishes then so does the other.

\begin{table}[htp]
  \centering

\renewcommand{\arraystretch}{2}
\begin{tabular}[htp]{|l|c|c|l|}
  \hline
  Cone & Symbol & Diagram \\
  \hline
${\bm \sigma_1}$ &  $E_0$& \adjustbox{valign=t}{
                \includegraphics{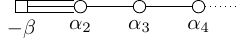}
                }\\ 

${\bm \sigma_2}$ &  $D'_0$ & \adjustbox{valign=m}{
                \includegraphics{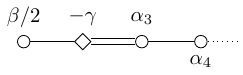}
                                               }\\
${\bm \sigma_3}$ & $D_0$ &\adjustbox{valign=t}{
                \includegraphics{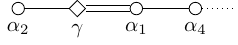}
                }\\
  
\hline

  ${\bm \sigma_2} \cap \sigma_1$ & $E_1$ &\adjustbox{valign=m}{
                \includegraphics{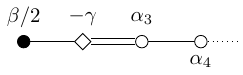}
                                                         
                                                         }\\

  ${\bm \sigma_3} \cap \alpha_2^\perp$ & $E'_1$ & 
                                                      \adjustbox{valign=t}{
                \includegraphics{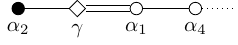}
                                                           }\\

  ${\bm\sigma_2 \bm\cap \bm\sigma_3}$ & $D_1$ &
\adjustbox{valign=m}{
                \includegraphics{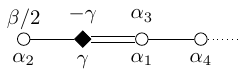}

                                                                    } \\
\hline

${\bm\sigma_2 \bm\cap \bm\sigma_3}\cap \sigma_1$ & $E_2$ &
                                                                   \adjustbox{valign=m}{
                \includegraphics{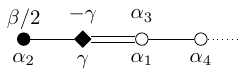}

                                                                   } \\
  ${\bm\sigma_2 \bm\cap \bm\sigma_3}\cap \alpha_1^\perp$ & $D_2$ &
\adjustbox{valign=m}{
                \includegraphics{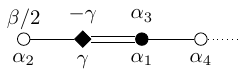}

                                                                    } \\
\hline
  ${\bm\sigma_2 \bm\cap \bm\sigma_3}\cap \sigma_1\cap \alpha_1^\perp$ & $E_3$ &
                                                                   \adjustbox{valign=m}{
                \includegraphics{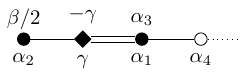}

                                                                   } \\
\hline                                                      
\end{tabular}

\medskip
  \caption{Basic type III cones in $\cF_\rc$}
  \label{tab:rc-cones3}
\end{table}

The lower-dimensional cones are obtained from these cones by
intersecting with some $\alpha_i^\perp$ for $i\ge4$. The diagram is
then obtained by marking these nodes black. Adding to the $D_2$ and
$E_3$ diagrams adjacent vertices makes it into larger $D_n$, $E_n$
diagrams. Marking some of the vertices that are not adjacent to the
end $D$ and $E$ diagrams adds some $A_n$ inside the chain
$\alpha_4,\dotsc,\alpha_{16}$.

\begin{remark}
  The reason for the $ADE$ notation is as follows: Starting with $D_2$
  and $E_3$, the cone is already a cone of the Coxeter fan $\cF_\cox$,
  so we use a subdiagram of the Coxeter diagram of
  Fig.~\ref{fig:coxeter} to label it.
  Note that for an $E_n$ diagram one gets a nonzero cone only if
  $n\le9$. For $n\le 8$ this is an elliptic subdiagram of
  Fig.~\ref{fig:coxeter}, i.e. a type III cone; for $n=9$ the cone
  $\cap\alpha_i^\perp \cap \oC_\bQ$ is the Type II $\wE_8\wE_8$ ray.

  We chose the labels $E_0$, $E_1$, $E'_1$, $E_2$, $D_0$, $D'_0$,
  $D_1$ 
  by analogy with the larger
  $E$ and $D$ diagrams. This will be further explained in
  Section~\ref{sec:rat-curves-divisor}.
\end{remark}

\begin{notation}
  To make the resulting $ADE$ label unique, we add the symbol
  $A_0$ to denote adjacent unmarked vertices.
  By a convention, explained further
  Section~\ref{sec:rat-curves-divisor}, we assign each
  label a \emph{charge}: $Q(A_n)=n+1$,
  $Q(D_n)=n+4$, $Q(E_n)=n+3$, and we require the sum of charges to be
  $24$. With these notations, a string of four white vertices is
  denoted by $A_0^3$ and adding black vertices to the interior two vertices
  produces diagrams $A_1A_0$, $A_0A_1$, $A_2$.
\end{notation}

We summarize this discussion as follows:

\begin{lemma}\label{lem:rc-cones}
  In the fan $\cF_\rc$ there are 9 maximal cones $\sigma_{ij}$, $1\le i,j\le
  3$ modulo $W(\II_{1,17})$ with the Dynkin labels,
  where $(D_0|D_0')$ denotes either $D_0$ or $D_0'$:
  \begin{displaymath}
  E_0A_0^{18}E_0,\ 
  E_0A_0^{17}(D_0|D_0'),\  (D_0|D_0')A_0^{17}E_0, \ 
  (D_0|D_0') A_0^{16} (D_0|D_0'),
  \end{displaymath}
  All type III cones are in a bijection with the labels
  \begin{displaymath}
    (E_{n_0}|E'_1|D_{n_0}|D_0') A_{n_1}\dotsc A_{n_k}
    (E_{n_{k+1}}|E'_1|D_{n_{k+1}}|D_0')
  \end{displaymath}
  with some $n_i\ge0$, and with $n_i\le 8$ for the $E_n$ diagrams, of
  total charge $24$.
\end{lemma}

Next, we list the type II rays of $\cF_\rc$. They are the rays of the
rational closure $\oC_{\bQ}$ of the cone $\{v^2>0\}$, so they are the
same as for the Coxeter fan. In the fan $\cF_\rc$, the $\wE_8\wE_8$
ray is contained in each of the 9 cones $\sigma_{ij}$, and the $\wD_{16}$
ray is contained in $\sigma_{ij}$ for $i=2,3$, $j=2,3$.

\begin{figure}[htp]
  \centering
  \includegraphics[width=.8\textwidth]{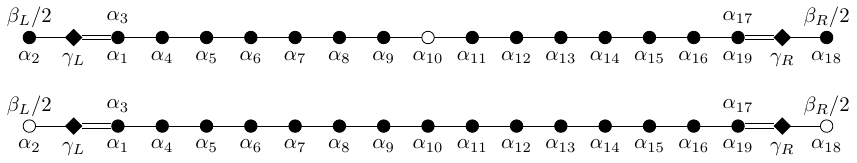}
\caption{Type II cones in $\cF_\rc$: $\wE_8\wE_8$ and $\wD_{16}$,
  shown in $\sigma_{33}$}
  \label{fig:rc-cones2}
\end{figure}

We conclude this section with a result which goes a long way towards
explaining some peculiar features of the fan $\cF_\rc$ which otherwise
may seem quite mysterious.

Recall: Let $\cF$ be a fan in a lattice $N$ defining a toric variety
$X(\cF)$. A cone $\tau\in\cF$ defines a torus orbit $O(\tau)$ whose
closure is $X(\tau)\subset X(\cF)$. Denote $N_\tau = N\cap
\bR\tau$. Then $X(\tau)$ is a toric variety for the fan $\Star(\tau)$
in the lattice $N/N_\tau$. 

Also recall that the root lattices $C_n$ and $D_n$ are the same but
their Weyl groups are different: $W(D_n)\subset W(C_n)$ is a subgroup
of index 2.

\begin{lemma}\label{lem:C-fan}
  Let $\Delta=\{\alpha_1,\alpha_3,\alpha_4,\alpha_5\dots\}$ be a $D_n$
  subdiagram in the Coxeter graph of Fig.~\ref{fig:coxeter} and
  $\tau\in\cF_\cox$ be the corresponding cone. Then $\Star(\tau)$ in
  $\cF_\cox$ is the Coxeter fan for $W(D_n)$ but $\Star(\tau)$ in
  $\cF_\rc$ is the Coxeter fan for $W(C_n)$.
\end{lemma}
\begin{proof}
  Note first that replacing either $\alpha_1$ or $\alpha_3$ by the
  $(-4)$-vector $\gamma=\alpha_3-\alpha_1$ transforms $\Delta$ into a
  $C_n$ Dynkin diagram. Also note that by \eqref{lem:saturation} the
  $\Delta$ root sublattice of $\II_{1,17}$ is saturated.

  The statement for $\Star(\tau)$ in $\cF_\cox$ is standard. The
  hyperplane $\gamma^\perp$ divides the fundamental chamber for
  $W(D_n)$ into two halves, each a fundamental chamber for
  $W(C_n)$. The reflection in $\gamma$ is not defined on
  $N=\II_{1,17}$ but it is well defined on $N/N_\tau$ which is the
  dual of the root lattice $D_n$, the same as for $C_n$. 
\end{proof}

\begin{remark}
  We will see in Section~\ref{sec:rat-curves-divisor} that the moduli
  of the corresponding stable surfaces are described by
  $T(C_n)/W(C_n)$, where $T(C_n)$ is the torus $\Hom(C_n, \bC^*)$. The
  map $T(D_n)/W(D_n) \to T(C_n)/W(C_n)$ is $2:1$.  This leads to an
  involution on a part of the fan $\cF_\rc$ and to the two cones
  $D_0$, $D'_0$ mapping to a unique stable surface of type $C_0$. This
  $D/C$ dichotomy appears to be the main reason for the refinement
  $\cF_\rc$ of $\cF_\cox$.
\end{remark}

\section{Degenerations of K3 surfaces and integral-affine spheres}
\label{sec:ia-pairs}

To prove that $\oF^\slc$ coincides with a toroidal
compactification, we extend the method developed in
\cite{alexeev2019stable-pair}. Central to this method is the notion of
an integral affine pair $(\ias,R_\ia)$ consisting of a singular integral-affine
sphere and an effective integral affine divisor on it.
From a nef model of a type $\III$ one-parameter degeneration, we
construct a pair $(\ias,R_\ia)$.  Vice versa, given a pair
$(\ias,R_\ia)$ we construct a combinatorial type of nef model.

\begin{definition} An {\it integral-affine structure} on an oriented real
  surface $B$ is a collection of charts to $\R^2$ whose transition
  functions lie in $\SL_2(\Z)\ltimes \R^2$. \end{definition}

On the sphere, such structures must have singularities.
We review some unpublished material from
 \cite{engel2018moduli-space} on these singularities.
 Let $\widetilde{\SL}_2(\R)\to \SL_2(\R)$ be the universal cover. This
 restricts to an exact sequence $$0\to \Z\to \widetilde{\SL}_2(\Z)\to \SL_2(\Z)\to 0.$$
Since $\SL_2(\R)$ acts on $\R^2\setminus \{0\}$, its universal cover and
the subgroup $\widetilde{\SL}_2(\Z)$ act on
$\widetilde{\R^2\setminus 0}$,
which admits natural polar coordinates $(r,\theta)\in \R^+\times \R$. A generator of the kernel
$\Z$ acts by the deck transformation $(r,\theta)\mapsto (r,\theta+2\pi)$.

\begin{definition}\label{naiveias} A {\it naive singular integral-affine structure} on $B$
is an integral-affine structure on the complement $B\backslash \{p_1,\dots,p_n\}$ 
of a finite set such that each point $p_i$ has a punctured neighborhood $U_i\setminus \{p_i\}$
modeled by an {\it integral-affine cone singularity}: The result of gluing a circular sector $$
\{ \theta_1 \leq \theta \leq \theta_2\}\subset \widetilde{\R^2\setminus 0}$$
along its two edges $\theta=\theta_1,\theta_2$ by an element of $\widetilde{\SL}_2(\Z)$.
 \end{definition}
 
 \begin{definition}  Let $(B,p)$ be an integral-affine cone singularity.
 We may assume that $\theta_1, \theta_2$ have rational slopes.
 Decompose $\theta_1\leq \theta\leq \theta_2$ into
 standard affine cones, i.e. regions $\SL_2(\Z)$-equivalent
 to the positive quadrant. Let $\{\vec{e}_1,\dots,\vec{e}_n\}$
 denote the successive primitive integral vectors pointing
 along the one-dimensional rays of this
 decomposition. Define integers $d_i$ by the formula
 $$\vec{e}_{i-1}+\vec{e}_{i+1}=d_i\vec{e}_i$$ using the
 gluing to define $d_1$. Then the {\it charge} is $$Q(B,p):=12+\sum (d_i-3)$$ and
 does not depend on the choice of 
 decomposition into standard affine cones.
  \end{definition}

By \cite{engel2018moduli-space, kontsevich2006affine-structures}, 
a naive singular integral-affine
structure on a compact oriented surface $B$ of genus $g$
satisfies $\sum Q(B,p_i) = 12(2-2g)$. As we are interested in
the sphere, the sum of the charges of singularities is $24$. This formula was
first proven by \cite[Prop.~3.7]{friedman1983smoothing-cusp} in the context
of the dual complex of a Kulikov degeneration, see Thm.~\ref{thm:ias-to-kulikov}.
For application to degenerations of K3 surfaces, we
need a more refined notion of integral-affine singularity.

\begin{definition} An {\it anticanonical pair} $(Y,D)$ is a smooth rational surface
$Y$ and an anticanonical cycle $D=D_1+\dots+D_n \in |-K_Y|$ of rational curves. 
Define $d_i:=-D_i^2$. \end{definition}

\begin{definition} The {\it naive pseudo-fan} $\mathfrak{F}(Y,D)$ of
  an anticanonical pair is a integral-affine cone singularity
  constructed as follows: For each node $D_i\cap D_{i+1}$ take a
  standard affine cone $\R_{\geq 0}\{\vec{e}_i,\vec{e}_{i+1}\}$ and
  glue these cones by elements of $\SL_2(\Z)$ so that
  $\vec{e}_{i-1}+\vec{e}_{i+1} = d_i \vec{e}_i$. 
\end{definition}

\begin{remark}\label{cbec-invt} 
Note that the cone singularity itself does not keep track of the rays.
For instance, blowing up the node
$D_i\cap D_{i+1}$ produces a new anticanonical pair
$(Y',D')\to (Y,D)$ whose naive pseudo-fan $\mathfrak{F}(Y',D')$ is identified
with $\mathfrak{F}(Y,D)$. The standard affine cone
$\R_{\geq 0}\{\vec{e}_i,\vec{e}_{i+1}\}$ is subdivided in two.
The charge $Q(Y,D):=Q(\mathfrak{F}(Y,D))$
is invariant under such a {\it corner blow-up}. \end{remark} 

\begin{definition} The {\it c.b.e.c.} (corner blow-up equivalence class)
of $(Y,D)$ is the equivalence class of anticanonical pairs which can be
reached from $(Y,D)$ by corner blow-ups and blow-downs. \end{definition}

Remark \ref{cbec-invt} implies that $\mathfrak{F}(Y,D)$ depends only on
the c.b.e.c. of $(Y,D)$. 

\begin{definition} A {\it toric model} of a c.b.e.c. is a choice of
  representative $(Y,D)$ and an {\it exceptional collection}: A sequence
  of  $Q(Y,D)$ successively contractible $(-1)$-curves
  which are not components of $D$.  The blowdown
  $(\overline{Y},\overline{D})$ is a {\it toric pair}, i.e. a toric
  surface with its toric boundary. We call these {\it internal
    blow-ups}.
\end{definition} 

\begin{definition}\label{def:true-sing} An {\it integral-affine singularity}
is an integral-affine cone singularity isomorphic
to $\mathfrak{F}(Y,D)$ for some anticanonical pair $(Y,D)$,
with a multiset of rays $\{\vec{e}_i\}$
corresponding to the components $D_i\subset D$
meeting an exceptional collection. The {\it pseudo-fan}
$\mathfrak{F}(Y,D)$ is the naive pseudo-fan, equipped with this data.
 \end{definition}
 
 Note that the components $D_i\subset D$ meeting an exceptional collection uniquely determine
 the deformation type of the anticanonical pair $(Y,D)$. 
 
 \begin{definition}\label{iso} Let $\phi\colon \mathfrak{F}(Y,D) \rightarrow \mathfrak{F}(Y',D')$
 be an isomorphism of integral-affine cone singularities.
 We say that $\phi$ is an {\it isomorphism of
  integral-affine singularities} if the two multisets of rays
  $\{\phi(\vec{e}_i)\}$ and $\{\vec{e}_i'\}$ determine the same
 deformation type. \end{definition}
 
Equivalently, after making corner blow-ups on $(Y',D')$ until
 the rays $\phi(\vec{e}_i)$ all form edges of the decomposition of $\mathfrak{F}(Y',D')$
 into standard affine cones, the pair $(Y',D')$ admits
 an exceptional collection meeting the components corresponding to $\phi(\vec{e}_i)$. 
From the definitions, integral-affine singularities, up to isomorphism, are in bijection with
c.b.e.c.s of deformation types of anticanonical pairs $(Y,D)$.
We are now equipped to remove the word ``naive" in Definition \ref{naiveias}.

 \begin{definition} An {\it integral-affine sphere}, or $\ias$ for short,
 is an integral-affine structure
 on the sphere with integral-affine singularities as in Definition \ref{def:true-sing}. \end{definition}
 
In particular, there is a forgetful map from $\ias$ to naive $\ias$
which forgets the data of the multisets of outgoing rays from each singularity.

\begin{definition} Let $(\vec{v}_1,\dots,\vec{v}_k)$ be a counterclockwise-ordered
sequence of primitive integral vectors in $\R^2$
and let $n_i$ be positive integers. We define an integral-affine singularity
$(B,p) = I(n_1\vec{v}_1,\dots,n_k\vec{v}_k)$ by declaring
$(B,p)=\mathfrak{F}(Y,D)$ where $(Y,D)$ is a blow-up of
a smooth toric surface $(\overline{Y},\overline{D})$ whose fan contains the rays
$\R_{\geq 0} \vec{v}_i$ at $n_i$ points
on the component $\overline{D}_i$ corresponding to $\vec{v}_i$.
\end{definition} 

Every c.b.e.c. admits some toric model and hence can be
presented in the form $I(n_1\vec{v}_1,\dots,n_k\vec{v}_k)$.
Since $Q(I(n_1\vec{v}_1,\dots,n_k\vec{v}_k)) = \sum n_i \geq 0$,
 an integral-affine surface with singularities, as defined,
is either a non-singular $2$-torus, or the $2$-sphere.

\begin{definition} Define the {\it$I_k$ singularity} as $I(k\vec{e})$. It has charge $k$. \end{definition}

\begin{remark}\label{rem:i1s} If an $\ias$ has all $I_1$ singularities
there are $24$ such. There is only one integral-affine singularity which underlies the naive cone
singularity of $I(\vec{e})$, corresponding to either marking the ray $\vec{e}$ or $-\vec{e}$.
Hence in the case where all $24$ charges are distinct,
there is no difference between
a naive $\ias$ and an $\ias$. \end{remark}

\begin{definition}
  An $\ias$ is {\it generic} if it has $24$ distinct $I_1$
  singularities.
\end{definition}

The relevance of these definitions lies in the following:

\begin{theorem}\label{thm:ias-to-kulikov} Let $X\rightarrow C$ be a Type III Kulikov model.
The dual complex $\Gamma(X_0)$ has
the structure of an $\ias$, triangulated into lattice triangles
of lattice volume 1.
Conversely, such a triangulated $\ias$ with singularities
at vertices determines a 
Type III central fiber $X_0$ uniquely up to
topologically trivial deformations.
\end{theorem}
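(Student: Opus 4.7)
The plan is to handle the two directions separately. The forward direction extracts an integral-affine structure from the combinatorial and analytic data of a Type III Kulikov central fiber, while the converse is a construction: given the IAS, build the normal crossing central fiber.

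For the forward direction, I would begin by recalling the structure theorem for Type III Kulikov central fibers due to Kulikov and Persson-Pinkham: $X_0=\cup V_i$ is a union of smooth rational surfaces glued along double curves $D_{ij}=V_i\cap V_j$ so that each $D_i=\sum_j D_{ij}$ is an anticanonical cycle on $V_i$, and triple points $p_{ijk}$ are $0$-strata. The dual complex $\Gamma(X_0)$ has vertices $V_i$, edges $D_{ij}$, triangles $p_{ijk}$, and is classically homeomorphic to $S^2$. I would build the integral-affine atlas in two kinds of charts: near each triple point $p_{ijk}$, the étale-local model $\{xyz=t\}$ provides a standard affine chart in which the corresponding triangle has lattice volume $1$; near each vertex $v_i$, the star $\mathrm{St}(v_i)$ is identified with a neighborhood of the origin in the pseudo-fan $\mathfrak{F}(V_i,D_i)$, read off from the self-intersection data $d_j=-(D_{ij}^2)_{V_i}$. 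The multiset of outgoing rays is recorded from a choice of exceptional collection on $V_i$, producing an integral-affine singularity in the sense of Definition \ref{def:true-sing}. One then verifies that the transition functions between these two types of charts lie in $SL_2(\bZ)\ltimes\bR^2$; this is the triple point formula, which is forced by $K_X\sim_C 0$ and $d$-semistability.

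For the converse, I would invert the construction. Given a triangulated IAS, each vertex singularity has the form $\mathfrak{F}(Y_v,D_v)$ for an anticanonical pair determined up to c.b.e.c.\ by Definition \ref{iso}. The marked rays at $v$ pick out those components of $D_v$ corresponding to edges of the triangulation at $v$; after making the necessary corner blow-ups one chooses a representative pair whose boundary components are indexed by these edges. For each edge $(v,v')$, identify $D_{vv'}\subset D_v$ with $D_{v'v}\subset D_{v'}$ as marked $\bP^1$s, the marked points being the triple points on either side, and glue the $V_v:=Y_v$ along these identifications to form $X_0$. The standard affine cones around each triangle (lattice volume $1$) guarantee that $X_0$ is normal crossing, and the integrality of transitions at edges (i.e.\ the triple point formula) forces $K_{X_0}\sim 0$, so $X_0$ is a Type III central fiber.

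The hard part will be matching the local combinatorial data of the IAS with the global algebro-geometric structure. In the forward direction the main obstacle is checking $SL_2(\bZ)$-integrality of transitions globally across the triangulation, which needs the triple point formula in its full form. In the converse the main obstacle is that the prescribed gluings of the $(Y_v,D_v)$ must be simultaneously consistent along every double curve, so that the integrality relations and self-intersection constraints encoded in the IAS translate exactly into gluing data that yields a normal crossing variety with trivial canonical class. The uniqueness up to topologically trivial deformations then follows from the connectedness of the moduli of anticanonical pairs within a fixed c.b.e.c.\ and of the moduli of gluing isomorphisms of the marked double curves.
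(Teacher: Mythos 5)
Your proposal is correct and follows essentially the same route as the paper, which only sketches the argument and defers to \cite{engel2018looijenga} and \cite[Rem1.11v1]{gross2015mirror-symmetry-for-log}: in the forward direction one assembles unit-volume lattice triangles so that each vertex $v_i$ carries the singularity $\mathfrak{F}(V_i,D_i)$, and in the converse one glues the anticanonical pairs whose pseudo-fans model the vertices. Your version simply fills in the local charts (the $\{xyz=t\}$ model at triple points, the pseudo-fan at vertices) and the consistency checks that the cited references carry out in detail.
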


\begin{proof}  See \cite{engel2018looijenga} or
\cite[Rem1.11v1]{gross2015mirror-symmetry-for-log} for the
  forward direction. Roughly, one glues together unit volume lattice triangles
  by integral-affine maps, in such a way
  that the vertex $v_i$ corresponding to a component $V_i\subset X_0$
  has integral-affine singularity
  $\mathfrak{F}(V_i,D_i)$. Here $D_i=\sum_j D_{ij}$ and $D_{ij}:=V_i\cap V_j$
  are the double curves lying on $V_i$.  
  For the reverse direction, one glues together the
  anticanonical pairs $(V_i,D_i)$ whose pseudo-fans 
  model the vertices of the triangulated $\ias$. The gluings
  are ambiguous, but all such gluings give homeomorphic surfaces $X_0$
  which are related by topologically trivial deformations.
  \end{proof} 

\begin{definition}\label{IAdivisor}
  Let $B$ be an $\ias$. An {\it integral-affine divisor} $R_\ia$ on
  $B$ consists of two pieces of data:
  \begin{enumerate}
  \item A weighted graph $R_\ia\subset B$ with vertices $v_i$,
    rational slope line segments as edges $v_{ij}$, and integer labels
    $n_{ij}$ on each edge. 

  \item Let $v_i\in R$ be a vertex and $(V_i ,D_i)$ be an
    anticanonical pair such that $\mathfrak{F}(V_i ,D_i)$ models $v_i$
    and contains all edges of $v_{ij}$ coming into $v_i$. We require
    the data of a line bundle $L_i \in \textrm{Pic}(V_i)$ such that
    $\deg{L_i}\big{|}_{D_{ij}}=n_{ij}$ for the components $D_{ij}$ of
    $D_i$ corresponding to edges $v_{ij}$ and $L_i$ has degree zero on
    all other components of $D_i$.
  \end{enumerate}
\end{definition} 

\begin{definition}\label{def:polarizing-ia-div}
  A divisor $R_{\ia}\subset B$ is {\it polarizing} if each line bundle
  $L_i$ is nef and at least one $L_i$ is big. The {\it
    self-intersection} is
  $R_\ia^2:=\sum_i L_i^2\in \Z_{>0}.$
\end{definition}

\begin{definition}
  Given an nef model $L\rightarrow X$,
  we get an integral-affine divisor
  $R_\ia\subset B=\Gamma(X_0)$ by simply restricting
  $L$ to each component. Since $L$ is nef,
  the divisor $R_\ia$ is effective i.e. $n_{ij}\geq 0$.
\end{definition}

\begin{remark}\label{balancing1}
  When $v_i\in R_\ia$ is non-singular, the pair $(V_i,D_i)$ is toric,
  and the labels $n_{ij}$ uniquely determine $L_i$. They must satisfy
  a balancing condition. If $\vec{e}_{ij}$ are the primitive integral
  vectors in the directions $v_{ij}$ then one must have
  $\sum n_{ij}\vec{e}_{ij}=0$ for such a line bundle $L_i$ to exist.

  Similarly, if $I_1=\mathfrak{F}(V_i,D_i)=I(\vec{e})$
  i.e. $(V_i,D_i)$ is a single internal blow-up of a
  toric pair, the $n_{ij}$ determine a unique line bundle $L_i$ so
  long as $\sum n_{ij}\vec{e}_{ij} \in \Z\vec{e}$. This condition is
  well-defined: the $\vec{e}_{ij}$ are well-defined up to shears in the
  $\vec{e}$ direction.
\end{remark}

Let $B$ be a lattice triangulated $\ias$ or equivalently,
$B=\Gamma(X_0)$ is the dual complex of a Type III 
degeneration. When $B$ is generic, an integral-affine divisor
$R_\ia\subset B$ is uniquely specified by a weighted graph satisfying
the balancing conditions of Remark \ref{balancing1}, so the extra
data (2) of Definition \ref{IAdivisor} is unnecessary.

\begin{definition}
  An integral-affine divisor $R_\ia\subset B$ is {\it compatible} with
  a triangulation if every edge of $R_\ia$ is formed from edges of the
  triangulation.
\end{definition}

If $B$ comes with a triangulation, we assume that an integral-affine
divisor is compatible with it.

\section{Compactification for the ramification divisor}
\label{sec:ramification-divisor}

\begin{theorem}\label{thm:ramif-comp}
  The normalization of the stable pair compactification $\slcram$ is
  the toroidal compactification $\torram$. 
\end{theorem}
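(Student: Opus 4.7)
The plan is to apply the general criterion of Theorem~\ref{threeinputs} together with Corollary~\ref{normalization} to the moduli space $F=F_\el$, the polarizing divisor $R$ equal to the ramification divisor of $X\to \bP(1,1,4)$, and the fan $\cF_\ram$. Because $F_\el$ sits inside $F_2$ as the unigonal divisor and the ramification divisor of the elliptic involution agrees with the ramification divisor of the degree-$2$ involution on a pseudoelliptic K3, most of the inputs can be imported or adapted from the analysis of $\oF_2^\slc$ in \cite{alexeev2019stable-pair}; the toroidal target is then the coarsening of the Coxeter fan by the subgroup $W_J=\langle s_{\alpha_1},s_{\alpha_{19}}\rangle\cong\bZ_2\oplus\bZ_2$.

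To verify input \textbf{(div)}, I would, for each projective class $[\lambda]\in\mathbb{P}(\oC_\Q\cap\delta^\perp/\delta)$, produce a divisor model $(X(\lambda),R)$ via the integral-affine machinery of Section~\ref{sec:ia-pairs}. For Type III rays $\lambda^2>0$, Theorem~\ref{thm:ias-to-kulikov} converts an appropriate triangulated $\ias$ (generic with $24$ $I_1$ singularities) into a Kulikov model, and the ramification class $3(s+2f)$ is realized on it by a balanced weighted graph $R_\ia$ supplied by the elliptic involution on each component, which is unique once the components and the section class are fixed (Remark~\ref{balancing1}). For Type II rays, which correspond to the two maximal parabolic subdiagrams $\wE_8\wE_8$ and $\wD_{16}$, the divisor model is already constructed in the course of the $F_2$ analysis. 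Input \textbf{(d-ss)} then reduces to showing that a $d$-semistable deformation preserving the Cartier property of $s,f$ also deforms the elliptic involution; this is standard obstruction theory for double covers and again parallels the $F_2$ case.

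The main obstacle is input \textbf{(fan)}: the combinatorial type of the stable model $(\oX_0(\lambda),\epsilon\oR)$ must be constant on the interiors of the maximal cones of $\cF_\ram$, each of which is the union $\cup_{g\in W_J}g(P)$ of four Coxeter chambers. On the nef-model side, the Coxeter fan already parametrizes combinatorial types of Kulikov and integral-affine data, so the task is to identify the reflections $s_{\alpha_1}$ and $s_{\alpha_{19}}$ with birational modifications that are \emph{collapsed} by the contraction $|nR|$ to the stable pair. Concretely, I expect these two ``irrelevant'' roots to correspond to $M_1$-type surgeries on the $\ias$ supported away from the trisection $R_\ia$, so that the associated flop alters the nef model but not the stable model contracted to the pseudoelliptic $(\oX,\epsilon\oR)$. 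Establishing this identification component-by-component, and checking that no further identifications occur across different $W_J$-cosets, is the technical heart of the proof.

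For the remaining inputs, \textbf{(qaff)} follows because each Type III stratum of $\slcram$ is a locally closed subscheme of the corresponding Type III stratum of $\oF_2^\slc$, which is quasiaffine by \cite{alexeev2019stable-pair}. Theorem~\ref{threeinputs} then gives a morphism $\torram\to\slcram$. For \textbf{(dim)}, I would count faces of the enlarged chamber $P_\ram$ via Lemma~\ref{lem:faces} applied to each of the four Coxeter sub-chambers, match them against the boundary strata enumerated on the slc side in Section~\ref{sec:ramification-divisor}, and check the dimensions agree cone-by-cone. Corollary~\ref{normalization} then upgrades the morphism to the normalization, completing the proof.
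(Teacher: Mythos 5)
Your overall strategy --- running Theorem~\ref{threeinputs} and Corollary~\ref{normalization} directly for $F_\el$ with the ramification divisor and the fan $\cF_\ram$ --- is not the route the paper takes, and as written it has a genuine gap exactly where you flag ``the technical heart.'' Input \textbf{(fan)} is the whole content of the theorem in your approach: you must prove that the combinatorial type of the stable model is constant on the interior of each union $\cup_{g\in W_J}g(P)$ of four Coxeter chambers, i.e.\ that the walls $\alpha_1^\perp$, $\alpha_{19}^\perp$ are crossed by modifications of the Kulikov model invisible to the contraction $|nR|$, while \emph{every other} wall of $P$ does change the stable type. You only state an expectation (``$M_1$-type surgeries supported away from $R_\ia$'') and do not verify either direction; nor do you supply the surjectivity onto the moduli of stable pairs on each stratum that \textbf{(dim)} actually requires (the analogue of the second half of Proposition~\ref{takestable}, which for the rc divisor occupies most of Section~\ref{sec:rat-curves-divisor}). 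Inputs \textbf{(div)} and \textbf{(d-ss)} are likewise only sketched. So the proposal is a plausible plan, not a proof.

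The paper avoids all of this by a restriction argument. Since $F_\el$ is the unigonal Heegner divisor in $F_2$ and the ramification divisor of $X\to\bP(1,1,4)$ is exactly the specialization of the polarizing divisor used for $\oF_2^\slc$, the compactification $\slcram$ is the closure of the unigonal divisor inside $\oF_2^\slc$, whose normalization is already known from \cite{alexeev2019stable-pair} to be the generalized Coxeter semifan compactification with irrelevant nodes $18$--$23$. Near the $0$-cusp this closure is an $18$-dimensional subtorus of a $19$-dimensional torus; on the fan side, deleting node $23$ and its neighbors from the degree-$2$ Coxeter diagram yields precisely the diagram of Fig.~\ref{fig:coxeter}, and the restricted semifan is the one with irrelevant nodes $1$, $19$, i.e.\ $\cF_\ram$. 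All four inputs of Theorem~\ref{threeinputs} are thereby inherited rather than re-proved. You do invoke this restriction for \textbf{(qaff)}; if you push it through uniformly you get the paper's two-paragraph proof, whereas carrying out your direct plan would require redoing for the ramification divisor essentially everything Section~\ref{sec:rat-curves-divisor} does for the rational curve divisor.
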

\begin{proof}
  Let $\II_{3,19}\ni h$ be the K3 lattice and a vector with $h^2=2$.
  Denote by $N_2$ the lattice
 $h^\perp = \II_{1,1} \oplus \II_{1,17} \oplus \la -2\ra$.  of signature
  $(2,19)$ and by $\bD_2$ be the corresponding Type IV domain. It is
  well known that the moduli space of polarized K3 surfaces $(X,L)$ of
  degree $2$ with $ADE$ singularities is the arithmetic quotient
  $F_2 = O^*(N_2)\backslash\bD_2$ for a finite subgroup
  $O^*(N_2)\subset O(N_2)$.

  There are two $O^*(N_2)$-orbits of vectors $v\in N_2$ with $v^2=-2$,
  with representatives $v_1$ and $v_2$ of divisibility $1$,
  resp. $2$ in $N_2^*$. They define two hyperplanes $v_k^\perp$ in $\bD_2$ and
  two Heegner divisors in $F_2$, for the nodal and unigonal K3
  surfaces. The second hyperplane $\bD=v_2^\perp$ is the Type IV
  domain for the lattice $N_\el = \II_{2,18} \subset N_2$,
  and its arithmetic quotient is our space $F=F_\el$.

  There are single orbits of primitive square $0$ vectors in $N_2$ and in
  $N_\el$. Let us a choose a representative $e\in N_\el\subset N_2$.
  Baily-Borel compactifications $\oF_2^\bb$ and $\oF_\el^\bb$ both have a
  single $0$-cusp.  A toroidal compactification of $F_2$,
  resp. $F_\el$, is described by a single fan supported on the light
  cone $\oC_\bQ$ for the lattice $e^\perp/e$, where $e^\perp$ is taken
  in $N_2$, resp. $N_\el$. One has
  $e^\perp/e = \II_{1,17}\oplus\la -2\ra$, resp.
  $e^\perp/e = \II_{1,17}$.  In particular, the Coxeter fans
  $\cF^2_\cox$, resp. $\cF_\cox = \cF^\el_\cox$, is defined by intersecting
  $\oC_\bQ$ by the hyperplanes $\alpha^\perp$ orthogonal to the roots
  in $e^\perp/e$. It follows that the toroidal compactification
  $\oF_\el^{\cF^\el_\cox}$ is the closure of $F_\el$ in the toroidal
  compactification $\oF_2^{\cF^2_\cox}$. 

  The fundamental domain in $\cF^2_\cox$ is described by the Coxeter
  diagram with 24 vertices (fundamental roots) $\alpha_i$ pictured in
  \cite[Fig. 4.1]{alexeev2019stable-pair}. The roots of divisibility
  $2$ are $\alpha_{21}, \alpha_{22}, \alpha_{23}$. Let us take
  $v_2=\alpha_{23}$. Then the hyperplane $\alpha_j^\perp$ intersects
  $v_2^\perp$ iff $|\alpha_{23} \cdot \alpha_i| \le 2$. Thus, the
  Coxeter diagram for $N_\el$ is obtained from that for $N_2$ by
  removing the nodes $\alpha_{21}, \alpha_{22}, \alpha_3$, and the
  result is precisely the Coxeter diagram of Fig.~\ref{fig:coxeter}
  for lattice $\II_{1,17}$. 

  It is shown in \cite{alexeev2019stable-pair} that the normalization
  of the stable pair compactification $\oF_2^\ram$ for the
  ramification divisor is a semitoric compactification for the semifan
  $\cF^2_\ram$ that is the coarsening of the Coxeter fan $\cF^2_\cox$
  obtained by reflecting the fundamental domain by the Weyl group
  $W_2$ generated by reflections in the six ``irrelevant'' roots
  $\alpha_{18}, \dotsc, \alpha_{23}$. This group is infinite, and so
  $\cF^2_\ram$ is a semifan and not a fan; the maximal-dimensional
  cones are not finitely generated.

  It follows that $\oF_\el^\ram$ is the closure of $F_\el$ in
  $\oF_2^\ram$ and its normalization is the semitoric compactification
  for the fan $\cF^\el_\ram = \cF^2_\ram \cap v_2^\perp$. Thus, it is
  the semifan obtained by reflecting the fundamental domain of
  $\cF^\el_\cox$ by the Weyl group $W_\el$ generated by reflections in
  ``irrelevant'' roots $\alpha_{18}, \dotsc, \alpha_{23}$ that are
  not $\alpha_{21}, \alpha_{22}, \alpha_3$ and
  $\alpha_{23}=v_2$ itself. In Fig.~\ref{fig:coxeter} these are the two
  roots denoted $\alpha_1$ and $\alpha_{19}$. Since the two vertices 1
  and 19 are disjoint, one has $W_\el = \bZ_2\oplus\bZ_2$, the semifan
  $\cF^\el_\ram$ is in fact a fan, and the semitoric compactification
  $\oF_\el^{\cF_\ram}$ is toroidal.
\end{proof}

\begin{figure}[htp]
  \centering
  \includegraphics[width=.6\linewidth]{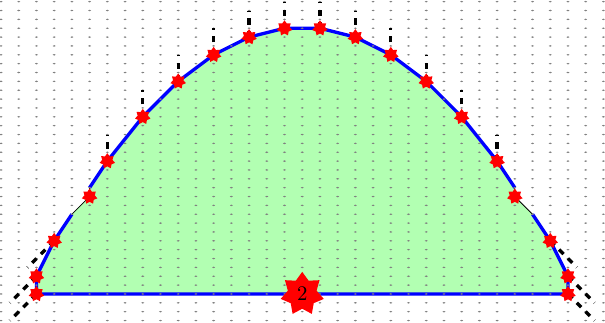}
  \caption{$(\ias, R_\ia)$ for the ramification polarization divisor}
  \label{fig:ramif-ias} 
\end{figure}

\begin{remark}
In \cite{alexeev2019stable-pair} the degenerations of degree~2 K3
pairs $(X,\epsilon R)$ are described by the integral-affine pairs
$(\ias, R_\ia)$ of \cite[Fig.9.1]{alexeev2019stable-pair}.
Following the proof of the above theorem, the pairs for $\oF^\ram$ 
are obtained by setting $a_{23}=0$, i.e. closing
the gap in the second presentation of \emph{loc. cit.}  We give the
result in Fig.~\ref{fig:ramif-ias}. The picture
shows the upper hemisphere, and the entire sphere is glued from two
copies like a taco or a pelmeni (a dumpling).  The polarizing divisor
is the equator; it is drawn in blue.

The divisor models and stable models can be read off from the pair
$(\ias, R_\ia)$: The divisor $R$ is the fixed locus of an involution
on the Kulikov model which acts on the dual complex by
switching the two hemispheres.
Irreducible components of the stable model correspond to the vertices
of $R_\ia$. Fig.~\ref{fig:ramif-ias} gives a stable model
with the maximal possible number 18 of irreducible components.
\end{remark}

\section{Compactification for the rational curve divisor}
\label{sec:rat-curves-divisor}

\subsection{Kulikov models of type III degenerations}
\label{sec:K-models}

Let $L,R\in\{1,2,3\}$.
Consider the following $19$ vectors in $(\frac{1}{2}\Z)^2$
\begin{align*} \vec{v}_1 &= \twopartdef{(0,1)}{L = 2,3}{(1,\tfrac{9}{2})}{L=1} \\
\vec{v}_i &= (1,\tfrac{10-i}{2})\textrm{ if }i=2,\dots,18 \\ 
\vec{v}_{19} &= \twopartdef{(0,-1)}{R = 2,3}{(1,-\tfrac{9}{2})}{R=1.} \end{align*}
Let $\ell=(\ell_1,\dots,\ell_{19})\in \Z_{\geq 0}^{19}$ be non-negative integers,
satisfying
the condition that $\sum \ell_i\vec{v}_i$ is a horizontal vector.
 
Form a polygon $P_{LR}(\ell)$ whose edges are the vectors
$\ell_i\vec{v}_i$ put end-to-end in the plane, together with a segment
on the $x$-axis. For instance $P_{1,2}(2,\dots,2,9)$ is shown in 
Fig.~\ref{fig:rc-ias}. Let $Q_{LR}(\ell)$ be the
lattice polygon which results from taking the union of $P_{LR}(\ell)$
with its reflection across the $x$-axis.

\begin{figure}[htp]
  \centering
  \includegraphics[width=.6\linewidth]{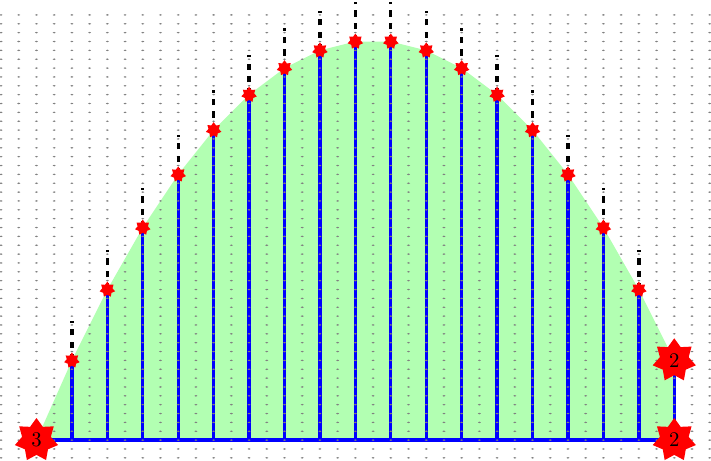}
  \caption[$(\ias, R_\ia)$ for the rational curve divisor]
  {$(\ias, R_\ia)$ for the rational curve polarization divisor.
    End behaviors: $L=1$, $R=2$ or $3$.}
  \label{fig:rc-ias}
\end{figure}

\begin{definition}\label{def:rc-ias} Define $B_{LR}(\ell)$, a naive singular $\ias$, as follows:
Glue each edge $\ell_i\vec{v}_i$ of $Q_{LR}(\ell)$ to its reflected edge
by an element of $\SL_2(\Z)\ltimes \R^2$ which preserves vertical lines.
This uniquely specifies the gluings, except when $\ell_1, \ell_{19}>0$
and $L,R\in\{2,3\}$ respectively. For these edges, we must specify the gluing
to be $-A^4$ where $A(x,y) = (x+y,y)$ is a unit vertical shear.
 \end{definition}
 
 \begin{remark} As naive $\ias$, we have that $B_{LR}(\ell)$ are isomorphic
 when we interchange the end behaviors $2\leftrightarrow 3$.
 It is only when we impose the extra data as in Definition \ref{def:true-sing}
 that we can distinguish them.\end{remark}
 
From Definition \ref{def:rc-ias}, we determine the $\SL_2(\Z)$-monodromy
of the naive $\ias$. Assume for convenience that all $\ell_i>0$.
Let $g_i\in \pi_1(B_{LR}(\ell) \setminus \{p_i\},\,*)$ for $i=1,\dots,20$ be simple 
counterclockwise loops 
based at a point $*$ in the interior of $Q_{LR}(\ell)$,
which successively enclose the singularities of $B_{LR}(\ell)$ from left to
right. Then the $\SL_2(\Z)$-monodromies
are: 
\begin{align*} 
  \rho(g_1) &= A^9 \textrm{ if }L=1,\hspace{10pt}
              \rho(g_1)=\rho(g_2) = -A^4\textrm{ if }L=2,3 \\
  \rho(g_{20}) 
            &= A^9\textrm{ if }R=1, \hspace{10pt}
              \rho(g_{19})=\rho(g_{20}) = -A^4\textrm{ if }R=2,3 \\
  \rho(g_i)&=A^{-1}\textrm{ for all remaining }i.
\end{align*}
When some $\ell_i=0$,
the monodromy of the resulting cone singularity is the product.

\begin{remark} 
  The image of the $\SL_2(\Z)$-monodromy representation of
  $B_{LR}(\ell)$ lands in the abelian group $\pm A^{\Z}$. This is
  related to the existence of a broken elliptic fibration on the
  corresponding Kulikov models.  When all $24$ singularities are
  distinct, the monodromy of an $\ias$ is never abelian, because the
  sphere would then admit a non-vanishing vector field. Here, we
  always have some singularity of charge $\ge2$. 
\end{remark}

Next, we enhance $B_{LR}(\ell)$ from a naive $\ias$ to 
an $\ias$:

\begin{definition}\label{enhanced-ias} 
The multisets of rays (cf. Definition \ref{def:true-sing})
giving toric models of the anticanonical pairs
whose pseudo-fans
model each singularity are listed
in Table~\ref{tab:sings-on-ias}. The rays
are chosen with respect to 
the open chart $Q_{LR}(\ell)$ on $B_{LR}(\ell)$.
The marked rays for right end $R$ are analogous, 
but reflected across the $y$-axis.
\begin{table}[h!]
\caption{Pseudofans modeling each singularity, for the left end type $L$}
\label{tab:sings-on-ias}
\begin{tabular}{c | l | l | c }
$L$ & Singularity & Marked rays & Notation \\ 
\hline
$1$ & $\ell_1\neq 0$, end singularity & $(1,-3),(1,0),(1,3)$ & $X_3$ \\
$1,2$ & $\ell_1=0$, $\ell_2\neq 0$ & $(1,-2),(1,0),(1,1),(1,3)$ & $X_4$ \\
$3$ & $\ell_1=0$, $\ell_2\neq 0$ & $(1,-2), (1,0),(1,2),(1,4)$ & $\iX_4$ \\
$1,2, 3$ & $\ell_i=0$ for $i\leq k$, $k\geq 2$ & All choices equivalent & $X_{k+3}$ \\
$2,3$ & $\ell_1\neq 0$, inner singularity & $(1,0),(1,2)$ & $Y_2$ \\
$2$ & $\ell_1,\ell_2\neq 0$, outer singularity & $(1,1), (1,3)$ & $Y_2$ \\
$3$ & $\ell_1,\ell_2\neq 0$, outer singularity & $(1,2),(1,4)$ & $\iY_2$ \\
$2,3$ & $\ell_1\neq 0,\,\ell_i=0$ for $2\leq i \leq k$ & All choices equivalent & $Y_{k+1}$ \\
 & $\ell_{i+j}=0$, $1\leq j\leq k$ in interior & $(0,-1)$, multiplicity $k$ & $I_k$ \\
\end{tabular}
\bigskip
\caption{All allowable combinatorial types of $\ias$}
\label{tab:all-types}
  \begin{tabular}[htp]{ c | c |  c | c | c }
& $L$ Symbol(s) &   Intermediate Symbols & $R$ Symbol(s) &   \\
    \hline
    $L=1$ & $X_3$ & $I_{1+n_1}\cdots I_{1+n_k}$, $n_i\geq 0$ &     $X_3$  & $R=1$  \\
    $L=2$ & $Y_2Y_2$ & &     $Y_2Y_2$ & $R=2$ \\
    $L=3$ & $Y_2Y_2'$ or $X_4'$ & &    $Y_2'Y_2$ or $X_4'$ & $R=3$ \\
    $L=1,2$ & $X_4$ & &     $X_4$ &  $R=1,2$ \\
    $L=2,3$ & $Y_2Y_{2+n}$, $n\geq 1$ & &    $Y_2Y_{2+n}$, $n\geq 1$ &  $R=2,3$ \\
    $L=1,2,3$ & $X_{3+n}$, $n\geq 2$ & &    $X_{3+n}$, $n\geq 2$ &   $R=1,2,3$
    \end{tabular}
\end{table} 
\end{definition}

When an end is an isolated point, the symbol $X$ is used.
When the left end is a vertical segment
the symbols $Y$ are used for the so-called
{\it inner} and {\it outer} singularities at the points $p_1$ and $p_2$, respectively.
The same applies to $p_{20}$ and $p_{19}$ at the right end.
For instance, in Fig.~\ref{fig:rc-ias}, there 
is one left-most singularity, labeled $X_3$. There are two right-most singularities.
Both are labeled $Y_2$ and the upper right-most singularity in the figure
is the ``outer singularity." The lower right-most singularity is the ``inner singularity."
Intermediate singularities are labeled $I_k$ and in
Fig.~\ref{fig:rc-ias}, specifically $I_1$.

The singularities notated $Y_2$ and $\iY_2$ are abstractly isomorphic,
but the prime is necessary to
distinguish how the marked rays sit on the sphere $B_{LR}(\ell)$
at the outer singularity. This is distinguishes Ends $2$ and $3$, respectively.


\begin{notation}\label{d-notation} Table \ref{tab:sings-on-ias} allows for very succinct notation
for the types of $\ias$ that appear in our construction.
For instance, if $(L,R)=(3,2)$ and $\ell_i\neq 0$
for exactly $i=2,5,6,16,19$ then we say that $B_{LR}(\ell)$
is of {\it combinatorial type} $\iX_4I_3I_1I_{10}Y_4 Y_2$
indicating the sequence of singularities one sees traveling along the vectors $\ell_i\vec{v}_i$.
The subscripts denote the charges, so they always add to $24$. As another example,
Fig.~\ref{fig:rc-ias} has an $\ias$ of combinatorial type $X_3I_1^{17}Y_2Y_2$
assuming that $R=2$. If $R=3$, the combinatorial type is instead
$X_3I_1^{17}Y_2'Y_2$. Generally, all allowable combinatorial types can be formed
by concatenating symbols as in Table \ref{tab:all-types} in an arbitrary manner, choosing
one symbol out of each column, in such a way that the sum of all indices is $24$,
and ensuring that no $X$-symbol has an index of $12$ or more.
%
\end{notation}

\begin{lemma}
  The types of the $\ias$ defined above are in a bijection with the
  types III cones in the fan $\cF_\rc$ of Lemma~\ref{lem:rc-cones} via
  the correspondence of symbols 
  $E_n=X_{n+3}$, $E'_1=X'_4$, $D_n=Y_2Y_{2+n}$, $D'_0=Y_2Y'_2$,
and $A_n=I_{n+1}$.
\end{lemma}

\begin{proof} 
  We have defined 9 maximal dimensional cones in $\mathcal{F}_\rc$ modulo
  $W$ and 9 types of $\ias$, with the Dynkin labels
  $(E_0|D_0|D'_0)A_0^{18|17|16}(E_0|D_0|D'_0)$ and with
  combinatorial types $(X_3|Y_2Y_2|Y_2Y_2')I_I^{18|17|16}(X_3|Y_2Y_2|Y_2'Y_2)$,
  respectively. For each type, an
  $\ias$ is defined by the collection of positive numbers $\ell_i$
  satisfying a single linear relation, that the height difference from
  the left end to the right end is zero. This linear relation between the $\ell_i$
  has 9 positive coefficients, 1 zero coefficient, and 9 negative coefficients.

  On the other hand, a point $\lambda$ in a maximal cone is defined by
  a collection of 19 nonnegative numbers, the intersection numbers
  between $\lambda$ and the 19 vectors among $\alpha_i$, $\beta_L$,
  $\gamma_L$, $\beta_R$, $\gamma_R$ that give the facets of this
  cone. These intersection numbers satisfy the relations given in
  equation~\eqref{eq:lin-relations} with the same sign pattern.  In fact,
  one checks that the formulas given in Cor.~\ref{final-monodromy}
  give an explicit bijection between lattice points in the interiors of the 9
  maximal cones of $\mathcal{F}_\rc$ and $\ias$
  of the corresponding combinatorial type with all $\ell_i>0$. This
   bijection extends to the faces the maximal cones, by allowing some
   $\ell_i=0$, and giving the symbol substitution rules described in the lemma.
\end{proof}



\medskip

We now decompose $B_{LR}(\ell)$ into unit width vertical strips
(in fact these are integral-affine cylinders). Cut these cylinders by the horizontal line
along the base of $P_{LR}(\ell)$
joining the left to the right end, to form a collection of unit width trapezoids, and triangulate
each trapezoid completely into unit lattice triangles.

\begin{remark}\label{rem:odd-issue}  If $\ell_i$ is odd for some odd $i$,
the singularities of $B_{LR}(\ell)$ may not
lie at integral points. In these cases, we can adjust the location of the singularity by moving
it vertically half a unit. This destroys the involution symmetry of $B_{LR}(\ell)$, but
the singularities of $B_{LR}(\ell)$ will be vertices of the triangulation.
Alternatively, we could just triangulate $B_{LR}(2\ell)$ in the same manner,
but our current approach allows
for a wider range of valid $\ell$ values.
\end{remark}
 
\begin{definition}\label{def:kulikov-from-sphere}
Define $X_{0,LR}(\ell)$ to be the unique deformation type of Type III Kulikov surface
associated to the triangulated $B_{LR}(\ell)$ by Theorem \ref{thm:ias-to-kulikov}.\end{definition}

Shifting singularities and replacing $\ell\mapsto 2\ell$ 
as in Remark \ref{rem:odd-issue} has the effect  \cite[Sec.~4]{engel2021smoothings}
of birational modifications and an order $2$ base change
to the Kulikov model in Definition \ref{def:kulikov-from-sphere},
neither of which ultimately affect the stable model.

\begin{example} The deformation type of an anticanonical pair $(V,D)$ forming a component
of $X_{0,LR}(\ell)$ can be quickly read off from Table \ref{tab:sings-on-ias}. For instance,
the singularity $\iX_4$ is the result of gluing the circular sector $\R_{\geq 0}\{(1,-4),(1,4)\}$ by
$A^8(x,y)=(x,8x+y)$ and has the rays $(1,-2),(1,0),(1,2),(1,4)$ marked. To realize this singularity
as a pseudo-fan we should further decompose the circular sector into standard affine cones
so that the one-dimensional rays
are $\vec{e}_n=(1,n)$ for $n=-4,\dots,4$. By the formula $\vec{e}_{i-1}+\vec{e}_{i+1} = -D_i^2 \vec{e}_i$
we have that the anticanonical cycle of $(Y,D)$
consists of eight $(-2)$-curves---computing
$-D_4^2$ requires taking indices mod $8$ and performing the gluing.

The marked rays indicate that four disjoint exceptional curves
meet $D_{-2}$, $D_0$, $D_2$, $D_4$.
Blowing these down gives the unique toric surface
whose anticanonical cycle has self-intersections
$(-1,-2,-1,-2,-1,-2,-1,-2)$, which is itself the blow-up
of $\mathbb{P}^1\times \mathbb{P}^1$ at the four corners of an anticanonical
square. \end{example} 

\subsection{Nef and divisor models of degenerations}
\label{sec:D-models}

We assume henceforth that our polarizing divisor is $R=s+\sum f_i$.
The case $R=s+m\sum f_i$ is treated similarly, by simply 
adding factors of $m$ to anything vertical.

Define a polarizing divisor $R_{\ia}$ on every $\ias$
of the form $B_{LR}(\ell)$ as follows:
The underlying weighted graph of $R_{\ia}$ is the 
union of the following straight lines:
\begin{enumerate}
\item the horizontal line joining the two ends, with label $n_{ij}=1$, and
\item the vertical line through any singularity, with label $n_{ij}=Q$, where
$Q$ is the total charge of the singularities on the vertical line. 
\end{enumerate}
See Figure \ref{fig:rc-ias}, where the graph
is shown in blue 
(note that a copy is reflected across the $x$-axis). In the example,
the label of the right-hand
vertical blue segment is $4$.

To give a complete definition of $R_{\ia}$ as in
Definition \ref{IAdivisor} requires choosing various line bundles. It is simpler
to directly specify the divisor model by giving a
divisor $R_i$ on each component of
$V_i\subset \mathcal{X}_{0,LR}(\ell)$ with appropriate intersection
numbers with the double curves, i.e. $R_i\cdot D_{ij}=n_{ij}$. 
These are listed in Table \ref{tab:divs-on-ias}
and require some explanation.

\medskip
${X_{k+3}\,(k\geq 0)}$, ${\iX_4:}$
The end component $(V,D)$ is an anticanonical pair
with $D$ a cycle of $(-2)$-curves of length $9-k$. Thus, $(V,D)$ is in the deformation
type of an elliptic rational surface with $D$ a fiber of 
Kodaira type $I_{9-k}$. We assume that $(V,D)$ is in fact elliptic.
The $f_i$ in Table \ref{tab:divs-on-ias} are the $Q(V,D)=k+3$
singular elliptic fibers not equal to $D$ and $s$ is a section.
When $Q=4$, the two cases $X_4$ and $\iX_4$
are the two different deformation types of pairs $(V,D)$
with a cycle of eight $(-2)$-curves. In the $\iX_4$ case,
$\oplus \Z D_i$ is an imprimitive
sublattice of $H^2(Y,\Z)$; in the $X_4$ case it is a primitive sublattice.

\smallskip
{Inner ${ Y_2}$:} Taking $(1,0),(0,1)$ to be the rays of the pseudo-fan
with polarization degrees $1$ and $Q$ respectively,
we get a pair $(\mathbb{F}_1 , D_1+D_2)$ with
$D_1^2=0$ and $D_2^2=4$.
Note $D_2$ is a bisection of the ruling on $\mathbb{F}_1$ with fiber
class $D_1$. Then $s$ is the $(-1)$-section
and $f_1$ and $f_2$ are the two fibers in the class of $D_1$
tangent to the bisection $D_2$. 
The fibers $f_i'$ are $Q-4$ other fibers in the same class as,
but not equal to $D_1$.
Here $Q$ is the total charge at the end.

\smallskip
{Outer ${Y_2}$ and ${\iY_2}$:} Taking $(0,-1),(1,4)$ to be the rays of the pseudo-fan
with polarization degrees $4$ and $0$ respectively,
we get $Y_2 = \mathfrak{F}(\mathbb{F}_1 , D_1+D_2)$ and
$\iY_2 = \mathfrak{F}(\mathbb{F}_0 , D_1+D_2)$
with $D_1^2=4$ and 
$D_2^2=0$ in both cases. Then $f_1$ and $f_2$
are the two fibers in the class of $D_2$ tangent to the bisection $D_1$.
Our notation with the prime indicates that $Y_2$ represents the
``primitive'' case, and $\iY_2$ the ``imprimitive'' case. 

\smallskip
${Y_{k+2}\,(k\geq 0):}$ Take $(0,-1),(1,4-k)$ to be the rays of the pseudo-fan.
This anticanonical pair $(V,D_1+D_2)$ has self-intersections $D_1^2=4-k$ and
$D_2^2=0$ respectively. It is the result of blowing up either of the previous
two cases at $k$ points on $D_1$. These cases coincide once $k>0$. Then $f_1$ and
$f_2$ are the pullbacks of the original two fibers tangent to the bisection, and the $f_i'$
are pullbacks of fibers which go through the
points blown up on $D_1$.

\smallskip
${I_k:}$ Take $(0,-1),(0,1)$ and two rays pointing left and right to be the 
rays of the pseudo-fan. Then $(V,D)$ is the blow-up of some
Hirzebruch surface $\mathbb{F}$ at $k$ points on a section. The $f_i$
are the pullbacks of fibers going through blown up points.

\smallskip
{Non-singular surfaces:} All non-singular surfaces $V_i$ are toric and ruled
over either of the double curves corresponding to the
vertical direction. The $f_i$ are fibers of this ruling. The total count of
fibers is $Q$ where $Q$ is the total charge on the vertical
line through the vertex $v_i\in B_{LR}(\ell)$.
At intersection points where the horizontal and
vertical lines of $R_{IA}$ meet, we include a section of the
vertical fibration. At an end of type $2$ or $3$, two of the fibers 
$f_1$ and $f_2$ are quadrupled.

\begin{table}
\caption{Divisors on each anticanonical pair}
\label{tab:divs-on-ias}
 \begin{tabular}{ l l }
Singularity & $R_i\subset V_i\subset X_{0,LR}(\ell)$ \\ 
\hline
$X_{k+3}$, $\iX_4$ & $s+\sum_{i=1}^{k+3} f_i$ \\
inner $Y_2$ & $s+2f_1+2f_2+\sum_{i=1}^{Q-4} f_i'$  \\
outer $Y_2$, $\iY_2$ & $2f_1+2f_2$  \\
 $Y_{k+2}$, $k>0$  & $2f_1+2f_2$+$\sum_{i=1}^k f_i'$   \\
$I_k$ & $\sum_{i=1}^k f_i$  \\
non-singular point at End $2,3$ & $4f_1+4f_2+\sum_{i=1}^{Q-4} (f_i'+f_i'')$  \\
non-singular intersection point of $R_{\ia}$ & $s+ \sum_{i=1}^Q f_i$ \\
non-singular point on vertical line of $R_{\ia}$ & $\sum_{i=1}^Q f_i$  \\
non-singular point not on $R_{\ia}$ & empty  \\
\end{tabular}
\vspace{5pt}
\end{table} 

\begin{definition}\label{def:sing-fibered} We say that $X_{0,LR}(\ell)$ is {\it fibered} if
\begin{enumerate} 
\item The end surfaces (for $X$-type ends) are elliptically fibered, and
\item A connected chain of fibers of the vertical rulings
glue to a closed cycle.
\end{enumerate}
Then $X_{0,LR}(\ell)$ admits a fibration of arithmetic genus $1$ curves over
a chain of rational curves.
We say it is furthermore {\it elliptically fibered} if sections $s$
on the components connecting the left and right ends glue to
a section of this fibration.
\end{definition}

\begin{remark} We henceforth assume that $X_{0,LR}(\ell)$ is glued in such a way
as to be elliptically fibered.
\end{remark}

\begin{remark} When the left end $L\in \{2,3\}$ and $\ell_1>0$,
the chain of fibers in
Definition \ref{def:sing-fibered} consists of one fiber on the
components corresponding to the inner and outer singularity,
and a sum of two fibers on
the intermediate surfaces. Thus, the genus $1$ curve loops through each
intermediate component twice: On its way up, and on its way down. \end{remark}

The number of nodes of the chain over which $X_{0,LR}(\ell)$ is fibered
is the $x$-component of $\ell_1\vec{v}_1+\cdots + \ell_{19}\vec{v}_{19}$
or alternatively the lattice length of
the base of $P_{LR}(\ell)$. The induced map of dual complexes is the projection of
$B_{LR}(\ell)$ onto the base of $P_{LR}(\ell)$, decomposed into unit intervals.

\begin{definition}\label{divisordef} To define the divisor model of $X_{0,LR}(\ell)$:
Assume that $X_{0,LR}(\ell)$ is elliptically fibered.
Choose divisors $R_i\subset V_i$ as prescribed by Table \ref{tab:divs-on-ias}
which glue to a Cartier divisor $R$ on
$X_{0,LR}(\ell)$ and so that the vertical components of $R$
are elliptic fibers. \end{definition}

\begin{definition}\label{def:very-sing} Let $X_{0,LR}(\ell)$ be elliptically fibered. We call the
vertical components of $R$ the {\it very singular fibers}.  \end{definition}

\begin{example} Consider $B_{21}(\ell)$ with $\ell_1=2$, $\ell_8=\ell_{16}=1$, and all other
$\ell_i=~0$. In Notation \ref{d-notation}, the combinatorial type is $Y_2Y_8I_8X_6$.
The polygon $Q_{21}(\ell)$ is shown in Figure \ref{ex:kul} and is decomposed into lattice
triangles with black edges. The decomposition refines the vertical unit strips.
The black circles indicate non-singular vertices and the red triangles are
the four (once glued) singular vertices $Y_2$, $Y_8$, $I_8$,~$X_6$.

The intersection complex of $X_{0,21}(\ell)$
is overlaid on the dual complex, with orange edges for double curves $D_{ij}$ and blue
vertices for triple points. The self-intersections $D_{ij}\big{|}_{V_i}^2$ are written in dark green and satisfy
the triple point formula
$D_{ij}\big{|}_{V_i}^2+D_{ij}\big{|}_{V_j}^2=-2$ which is necessary for being a Kulikov model.
The neon green indicates the section $s$ and the hot pink indicates the very singular fibers, with $\times N$
indicating that there are $N$ such vertical components of $R$ and $2(\times 2)$ indicating
that there are two such vertical components, each doubled.
\end{example}

\begin{figure}[htp]
\centering
  \includegraphics[width=5in]{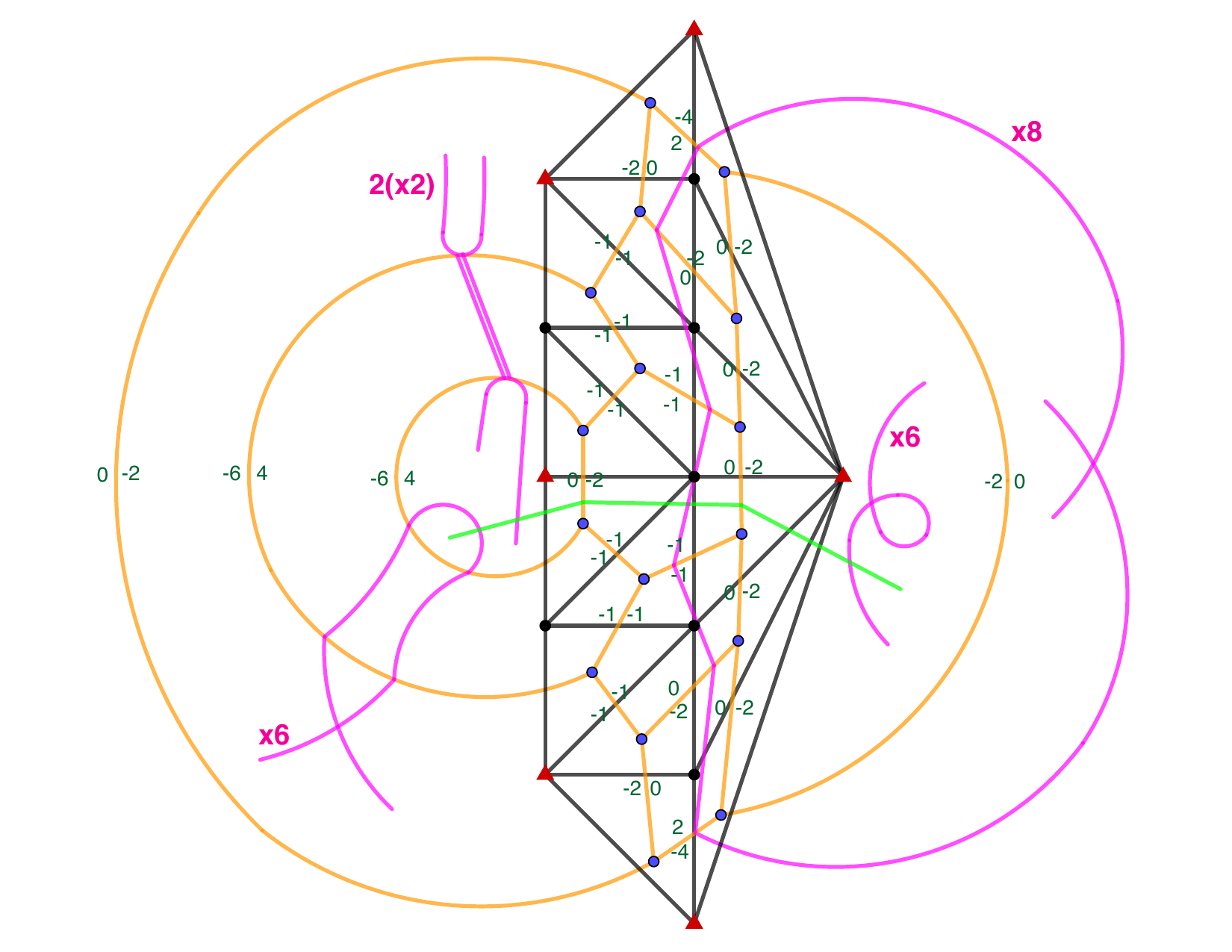}
  \caption{A divisor model of type $Y_2Y_8I_8X_6$.} 
  \label{ex:kul}
\end{figure}

\subsection{Moduli of $d$-semistable divisor models}
\label{sec:dss}

In this section we understand the condition of $d$-semistability
on our elliptically fibered surfaces $X_{0,LR}(\ell)$. 
Let $X_0$ denote a {\it Kulikov surface}, that is, a topologically
trivial deformation of the central fiber of a Kulikov model $X\to (C,0)$.
For example, $X_{0,LR}(\ell)$ is a Kulikov surface.

\begin{definition}\label{def:dss} We say $X_0$ is {\it $d$-semistable} if
$\mathcal{E}xt^1(\Omega^1_{X_0},\mathcal{O}_{X_0})=\cO_{(X_0)_{\rm sing}}.$
\end{definition}

By \cite{friedman1983global-smoothings}, $X_0$ is the central fiber of a Kulikov model
if and only if it is $d$-semistable. We recall some basic statements about $d$-semistable Kulikov
surfaces from \cite{friedman1986type-III, laza2008triangulations,
  gross2015moduli-of-surfaces}.  Let $X_0$ be a Type III Kulikov surface
with irreducible components $V_i$ and double curves
$D_{ij}=V_i\cap V_j$.
One defines the lattice of ``numerical Cartier divisors''
\begin{displaymath}
  L = \ker\big( \oplus_i \Pic V_i \to \oplus_{i<j} \Pic D_{ij} )
\end{displaymath}
with the homomorphism given by restricting line bundles and applying
$\pm1$ signs.
The map is surjective over $\bQ$ by \cite[Prop. 7.2]{friedman1986type-III}.
The set of
isomorphism classes of not necessarily $d$-semistable Type III
Kulikov surfaces of the combinatorial type $X_0$ is isogenous to
$\Hom(L, \bC^*)$.

The {\it period point} \cite[Sec.~3]{friedman1986type-III}
associated to $X_0$ is an element $\psi \in \Hom(L,\bC^*)$.
It inputs a collection of line bundles $L_i\in \Pic V_i$ whose
degrees agree on double curves $L_i\cdot D_{ij} = L_j\cdot D_{ji}$ and measures
an obstruction in $\C^*$ to their gluing together to form a line bundle on $X_0$.
In particular, the Picard group of the surface $X_0$ is $\ker(\psi)$. The surface is $d$-semistable
iff the following divisors are Cartier:
$\xi_i = \sum_j D_{ij}-D_{ji} \in L$. Note that $\sum_i
\xi_i=0$. Thus, the $d$-semistable surfaces correspond to the points
of multiplicative group $\Hom(\bL,\bC^*)$, where 
\begin{displaymath}
  \Xi = \frac{ \oplus_i \bZ\xi_i }{ (\sum_i \xi_i)},
  \qquad
  \bL = \coker(\Xi\to L).
\end{displaymath}
There is a symmetric bilinear form on
$L$ defined by $(R_i)^2:=\sum R_i^2$ which descends to $\mathbb{L}$
because $\Xi$ is null (in fact it generates
 the null space over $\Q$). Define
 $\overline{\mathbb{L}}:=\mathbb{L}/(\rm tors)$.

\begin{definition} Call a surface $X_0$ with $\psi=1\in \Hom(L,\C^*)$
a {\it standard surface}. \end{definition}

\begin{proposition}\label{prop:dss-fiber} Let $X_{0,LR}(\ell)$ be an elliptically fibered divisor model
as in Definition \ref{divisordef}. The classes of the fibers of the fibration
$$X_{0,LR}(\ell)\to \mathbb{P}^1\cup \dots \cup \mathbb{P}^1$$ reduce to the same
class in $\mathbb{L}$.  \end{proposition}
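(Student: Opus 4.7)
The plan is to write $[F] - [F']$ explicitly as a $\bZ$-combination of the generators $\xi_i$ of $\Xi$, so that it vanishes in $\mathbb{L} = L/\Xi$. First I reduce to the case where $F$ and $F'$ lie over distinct components of the base. When $p, p'$ lie on the same base component $B_k \cong \bP^1$, the difference $[p] - [p']$ is the divisor of a rational function $f = (f_k, 1, 1, \ldots)$ on $B$, with $f_k$ the rational function on $B_k$ having zeros at $p$ and poles at $p'$, extended by the constant $1$ on the other components. Then $F - F' = \operatorname{div}(\pi^* f)$, so $[F] = [F']$ holds even in $L$ itself.

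For the essential case, $F$ lies over $p \in B_k$ and $F'$ over $p' \in B_{k+1}$, where $B_k$ and $B_{k+1}$ meet at a node $q$. Let $S = \{i : \pi(V_i) \subseteq B_{k+1} \cup B_{k+2} \cup \cdots\}$, fixing a consistent convention for the components $V_i$ whose dual-complex vertex sits at $x$-coordinate $x(q)$. I expect the identity
\[
[F'] - [F] = \sum_{i \in S} \xi_i \quad \text{in } L.
\]
The verification is component-by-component. For $V_j$ ruled over $B_{k+1}$ with $j \in S$, the $V_j$-component of $\sum_{i \in S} \xi_i$ simplifies, using $D_j = \sum_i D_{ji}$, to $\sum_{i \notin S,\, i\,\text{adj}\,j} D_{ji}$. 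These boundary double curves all lie over $q$ and, since $V_j$ carries the vertical-direction ruling of Table~\ref{tab:divs-on-ias}, they form the scheme-theoretic preimage of $q$ on $V_j$, which is a fiber of the ruling --- matching $F'|_{V_j}$. A symmetric argument handles $V_j$ over $B_k$, and for $V_j$ not adjacent to $q$ both sides vanish.

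The main obstacle is the bookkeeping for the components $V_i$ whose vertex $v_i$ lies exactly at the $x$-coordinate of $q$, i.e., ``over the node.'' These carry the same vertical-direction ruling but can be assigned to either $B_k$ or $B_{k+1}$ as part of the data of the fibration $\pi$. I expect that any consistent assignment works: toggling one such component between the two sides alters $\sum_{i \in S} \xi_i$ by a single $\xi_i$, which is zero in $\mathbb{L}$. The only local verification required at such a $V_i$ is that its double curves on the side opposite to its assignment sum, in $\Pic V_i$, to a fiber of the assigned ruling; this is precisely the fibration-compatibility baked into the correspondence between $X_{0,LR}(\ell)$ and its integral-affine pair $(B_{LR}(\ell), R_\ia)$. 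Granting this local check, the component-by-component verification closes the argument and shows $[F'] - [F] \in \Xi$, proving the proposition.
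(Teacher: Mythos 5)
Your proof is correct and takes essentially the same route as the paper's: both express the difference of two fiber classes as $\sum_{j\in S}\xi_j$, where $S$ collects the components lying over the base components on one side of the separating node, so the difference dies in $\mathbb{L}$ (the paper telescopes from a fixed $f_1$ rather than comparing adjacent fibers, which is the same computation). Your worry about components ``over the node'' is vacuous here --- every vertex of the triangulated $B_{LR}(\ell)$ sits at an integer $x$-coordinate, so every component surjects onto some base $\mathbb{P}^1$ --- but your handling of it is harmless and correct in any case.
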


\begin{proof} Let $f_i$ be a fiber of the fibration over a
non-nodal point on the $i$th $\mathbb{P}^1$.
Define $\sigma_i :=\sum_{j\in S_i } \xi_j $ where $S_i$ denotes
the set of components which fiber over a $\mathbb{P}^1$
with index less than $i$. Then $[f_i]-[f_1] = \sigma_i$. Hence $[f_i]$ and $[f_1]$
define the same class in $\mathbb{L}$ for all $i$, which we denote by $f$. \end{proof}

\begin{lemma}\label{dlemma} A standard surface $X_{0,LR}(\ell)$ is elliptically fibered. \end{lemma}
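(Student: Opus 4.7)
The plan is to exhibit the elliptic fibration on the standard surface by constructing the classes of its fiber and section as Cartier divisors on $X_{0,LR}(\ell)$, and then showing that effective representatives can be chosen compatibly across components. First I would use Proposition~\ref{prop:dss-fiber} (and its analog for sections) to produce well-defined candidate classes $f,s\in\mathbb{L}$ from the vertical rulings and horizontal sections on each component. For a standard surface $\psi=1$ one has $L\subset\ker\psi=\Pic(X_{0,LR}(\ell))$, so both classes lift to actual Cartier classes on $X_{0,LR}(\ell)$.

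Next I would show that the class $f$ is represented by a morphism $\pi\colon X_{0,LR}(\ell)\to B$ to a chain of $\bP^1$'s whose length equals the $x$-component of $\sum_i\ell_i\vec{v}_i$. On each intermediate toric component the restriction $f|_{V_i}$ is a fiber of the canonical vertical ruling; on each $Y$-type end it is a fiber of the explicit ruling recorded in Table~\ref{tab:divs-on-ias}; on each $X$-type end it is an elliptic fiber of a chosen elliptic pencil on the anticanonical pair, which exists because the cycle of $(-2)$-curves is of Kodaira type $I_{9-k}$. For $\psi=1$, the torus-fixed points on adjacent toric components are identified with each other, and the limit point of the $X$-end elliptic pencil matches the torus-fixed fiber of the adjacent intermediate component, so the local fibrations assemble into a global morphism $\pi$. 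Over a generic $b\in B$ the fiber $\pi^{-1}(b)$ is a cycle of $\bP^1$'s, one per intermediate component lying above $b$, which has arithmetic genus one by adjunction (equivalently, because $K_{X_{0,LR}(\ell)}\sim 0$).

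Finally, the same standardness argument, now applied to the canonical horizontal sections of the rulings (the torus-fixed section on each intermediate component and the chosen section $s$ on each $X$-end), produces a global section of~$\pi$ running along the horizontal spine of $B_{LR}(\ell)$. This establishes the elliptic condition of Definition~\ref{def:sing-fibered}. The main obstacle is the gluing step: promoting the numerical compatibility of Proposition~\ref{prop:dss-fiber} to a genuine geometric identification of fiber base-points and section points on each double curve $D_{ij}\cong\bP^1$. This amounts to tracking how $\psi=1$ acts on the pair of torus-fixed points of each $D_{ij}$ and matching these with the $I_{9-k}$ limit points at the $X$-type ends, which is a direct check using Friedman's presentation of Type~III gluings together with the explicit local models in Table~\ref{tab:sings-on-ias}.
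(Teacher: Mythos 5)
Your overall strategy is the same as the paper's: the only real input is $\psi=1$, used to show that the vertical chains of rational curves close up into cycles and that the components of the section glue into a global section. The paper phrases this by observing that $\psi(f_i)$, resp.\ $\psi(s)$, is precisely the element of $\bC^*$ measuring the failure of the chain, resp.\ the section, to match up on the relevant double curve; your explicit assembly of the morphism $\pi$ and the genus count are a slightly more verbose packaging of the same argument, and deferring the gluing bookkeeping to ``a direct check'' is no worse than the paper's ``it is easy to check.''

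There is, however, one step that is wrong as stated. You assert that the elliptic pencil on an $X$-type end component exists ``because the cycle of $(-2)$-curves is of Kodaira type $I_{9-k}$.'' This is false: an anticanonical pair $(V,D)$ with $D$ a cycle of $(-2)$-curves is only in the \emph{deformation type} of a rational elliptic surface, and the pencil $|D|$ exists if and only if $\cO_V(D)|_D$ is trivial in $\Pic^0(D)\cong\bC^*$, i.e.\ if and only if the period point of $(V,D)$ kills the fiber class. This is exactly condition (1) of Definition~\ref{def:sing-fibered}, and Remark~\ref{surjperiods} singles out ``$V_i$ is elliptically fibered in class $f$'' as an \emph{extra} condition $\psi_i(f)=1$, not an automatic one. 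For the standard surface the condition does hold, since $\psi_i=\psi\circ\iota_i=1$, so the fix costs nothing and uses the same hypothesis you invoke everywhere else; but as written your argument would produce an elliptic pencil on every deformation of the end component, which would make part of the lemma vacuous and contradicts the remark just cited. You should also be careful with the phrase ``$\psi=1$ identifies the torus-fixed points'': the torus-fixed points are matched under \emph{every} admissible gluing (the triple points force this); what standardness actually controls is that the composite of the induced identifications of the bases of the vertical rulings around a cycle of components is the identity, which is the content of $\psi(f_i)=1$.
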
 

\begin{proof} Consider a vertical chain of rational curves as in
Definition \ref{def:sing-fibered} on $X_{0,LR}(\ell)$,
which is not, a priori, elliptically fibered. This vertical chain defines a class $f_i\in L$
and it is easy to check that $\psi(f_i)$ is the element of $\C^*$ which makes the two ends
of the chain match on the appropriate double curve. Since $\psi(f_i)=1$,
the chain $f_i$ closes into a cycle.
Since the standard surface is $d$-semistable,
Proposition \ref{prop:dss-fiber} implies all vertical strips of $X_{0,LR}(\ell)$ are fibered.

Similarly, there is a unique way to successively glue the components of the section $s$
into a chain from left to right, except possibly that the section at the right end doesn't match up.
The mismatch is an element of $\C^*$ equal to $\psi(s)$. Hence $s$ glues to a section
on the standard surface. \end{proof} 

\begin{proposition}\label{dssinput} The moduli space of $d$-semistable elliptically fibered
surfaces $X_{0,LR}(\ell)$ is isogenous to the torus
$\Hom(\overline{\mathbb{L}}/\Z f\oplus \Z s,\C^*)\cong (\C^*)^{17}$. In particular,
all deformations which keep $f$ and $s$ Cartier are elliptically fibered. \end{proposition}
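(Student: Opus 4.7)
The plan is to parametrize $d$-semistable surfaces of the combinatorial type of $X_{0,LR}(\ell)$ by characters of $\mathbb{L}$, cut out the elliptically fibered locus by two explicit linear conditions in $\mathbb{L}$, and match dimensions. By the discussion preceding the proposition, the moduli of (not necessarily $d$-semistable) surfaces of this combinatorial type is isogenous to $\Hom(L,\C^*)$, and the $d$-semistability condition $\psi(\xi_i)=1$ for all $i$ says precisely that $\psi$ factors through $\mathbb{L} = L/\Xi$. So the first step is to rewrite the moduli of $d$-semistable surfaces as (isogenous to) $\Hom(\mathbb{L},\C^*)$, and up to killing torsion as $\Hom(\overline{\mathbb{L}},\C^*)$.

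Next I would promote $f$ and $s$ to canonical classes in $\mathbb{L}$. Proposition \ref{prop:dss-fiber} already shows that the vertical fiber classes $f_i$ all coincide in $\mathbb{L}$, giving a class $f\in\mathbb{L}$. I would apply the same $\xi$-class telescoping argument, but to the horizontal chain of section components $s_i$ running between the two ends, to produce a single class $s\in\mathbb{L}$. This step is formally the same as Proposition \ref{prop:dss-fiber}: adjacent $s_i, s_{i+1}$ differ, in $L$, by a sum of $\xi_j$'s over the components on one side of the corresponding double curve.

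The core step is a direct extension of the proof of Lemma~\ref{dlemma} to an arbitrary $\psi \in \Hom(\mathbb{L},\C^*)$. For a fixed vertical chain with class $f_i\in L$, the scalar $\psi(f_i)\in\C^*$ is exactly the mismatch between the two ends of the chain on the relevant double curve; hence the chain closes into a cycle iff $\psi(f_i)=1$, which by Proposition~\ref{prop:dss-fiber} is equivalent to $\psi(f)=1$. The analogous statement for section pieces says the horizontal section chain glues iff $\psi(s)=1$. Therefore the surface $X_\psi$ is elliptically fibered precisely when $\psi$ kills both $f$ and $s$, and the moduli of $d$-semistable elliptically fibered surfaces is isogenous to $\Hom(\mathbb{L}/(\Z f+\Z s),\,\C^*)$, which after passing to the torsion-free quotient is $\Hom(\overline{\mathbb{L}}/\Z f\oplus \Z s,\,\C^*)$. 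The ``in particular'' clause is then immediate: a flat deformation keeping $f$ and $s$ Cartier holds the characters $\psi(f),\psi(s)$ identically equal to $1$ along the deformation, so every fiber lies in the locus just identified.

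What remains is to verify that this torus has dimension $17$. The cleanest route is a dimension count: the $d$-semistable locus of $X_{0,LR}(\ell)$ is an open subset of the corresponding toroidal boundary stratum of $\overline{F}_\el^{\cF_\rc}$, which is $17$-dimensional (codimension $1$ in the $18$-dimensional moduli $F_\el$) as soon as $\ell$ lies in the interior of a ray of $\cF_\rc$; imposing two further independent Cartier conditions $f,s$ must not drop the dimension below $17$, and the two characters $\psi(f),\psi(s)$ are independent because $f$ and $s$ are linearly independent in $\overline{\mathbb{L}}$ (their classes have different self-intersection data through the form $(R_i)^2=\sum R_i^2$, e.g.\ $f^2=0$ while $s^2=-2$ are visible from Table~\ref{tab:divs-on-ias}). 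I expect the main obstacle to be this rank/independence assertion; a self-contained combinatorial proof would compute $\rk L$, $\rk \Xi$, and the rank-$2$ subgroup $\Z f+\Z s$ directly from the triangulated $\ias$ $B_{LR}(\ell)$, using that the number of components, double curves, and triple points of $X_{0,LR}(\ell)$ are read off from triangles, edges, and vertices, so that the Euler characteristic of $\oC_\bQ$ pins down $\rk\overline{\mathbb{L}}=19$ and hence $\rk(\overline{\mathbb{L}}/(\Z f+\Z s))=17$.
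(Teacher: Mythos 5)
Your proposal is correct and follows essentially the same route as the paper: identify the $d$-semistable locus with $\Hom(\overline{\mathbb{L}},\C^*)$, reuse the mismatch argument of Lemma~\ref{dlemma} to show that the fiber chain and section glue precisely when $\psi(f)=\psi(s)=1$, and count dimensions. The paper's proof is just a terser version of this (it cites Lemma~\ref{dlemma} both for existence of one elliptically fibered $d$-semistable surface and for the equivalence with the two character conditions), so your extra remarks on the independence of $f,s$ in $\overline{\mathbb{L}}$ are a harmless elaboration rather than a different method.
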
 

\begin{proof} By Proposition \ref{dlemma}, a $d$-semistable elliptically fibered surface exists.
Given one, the $d$-semistable 
topologically trivial deformations
are locally parameterized by the $19$-dimensional torus
$\Hom(\overline{\mathbb{L}},\C^*).$
Those that keep $s$ and $f$ Cartier are thus identified with the $17$-dimensional
subtorus for which $\psi(f)=\psi(s)=1$. Starting with the elliptically fibered
standard surface $X_{0,LR}(\ell)$, the arguments in Lemma \ref{dlemma} imply that
keeping $s$ and $f$ Cartier preserves the condition of being elliptically fibered. The
converse is also true, so the proposition follows.
\end{proof}

The space of $d$-semistable deformations of $X_{0,LR}(\ell)$ 
which keep $f$ and $s$ Cartier is $18$-dimensional and smooth and the
$17$-dimensional subspace of topologically trivial deformations
is a smooth divisor. 

\begin{definition}\label{lambdalatt}
Let $X_0$ be any Kulikov model. Define for any component $V_i$
the lattice $\widetilde{\Lambda}_i:=\{D_{ij}\}^\perp\subset H^2(V_i,\Z)$. Then
there is an inclusion $\iota_i\colon \widetilde{\Lambda}_i\hookrightarrow L$
sending $\lambda\in \widetilde{\Lambda}_i $ to
the numerically Cartier divisor
which is $\lambda$ on $V_i$ and $0$ on all other components.
Now suppose that $X_0 = X_{0,LR}(\ell)$ is elliptically fibered.
Define $\Lambda_i$ to be the image of $\widetilde{\Lambda}_i$ in
$\overline{\mathbb{L}}/\Z f\oplus \Z s$ and let $\Lambda := \oplus \Lambda_i$.
\end{definition}

Concretely, $\widetilde{\Lambda}_i$ is zero unless $Q(V_i)> 0$ and it maps
isomorphically to $\Lambda_i$ unless $V_i$ is an $X$-type end surface, in which
case the map to $\Lambda_i$ quotients by $\Z f$.

\begin{remark}\label{surjperiods}
By Proposition \ref{dssinput}, it is possible to realize any homomorphism
$\Hom(\Lambda,\C^*)$ as the restriction of the period map $\psi$
of some $d$-semistable elliptically fibered surface. Following 
\cite{gross2015moduli-of-surfaces}, \cite{friedman2015on-the-geometry}
the {\it period point} of the anticanonical pair $(V_i, \sum_j D_{ij})$ is the 
restriction homomorphism
$$\psi_i\colon \widetilde{\Lambda}_i \to {\rm Pic}^0(\textstyle\sum_j D_{ij})\cong \C^*$$
and this period map is compatible with the inclusion of $\widetilde{\Lambda}_i$
into $L$ in the sense that $\psi\circ \iota_i = \psi_i$. Thus, any period
point of any component $V_i$ can be realized by
some $d$-semistable elliptically fibered surface, {\it except} for the case
when $V_i$ is an $X$-type end, where the extra condition $\psi_i(f)=1$ ensures
either of the equivalent conditions that (1) $\psi_i$ descends to $\Lambda_i$
or (2) $V_i$ is elliptically fibered in class $f$.

\end{remark}

\subsection{Limits of elliptic fibrations}

We prove in this section that $X_{0,LR}(\ell)$ is a limit of elliptically fibered
K3 surfaces and that the very singular fibers (cf. Definition \ref{def:very-sing}) are the limits
of the correct number of singular fibers. 

\begin{proposition}\label{limit} Let $X_{LR}(\ell)\rightarrow C$
be a smoothing of an
elliptically fibered $X_{0,LR}(\ell)$ which keeps $f$
and $s$ Cartier. Then the general fiber is an elliptic K3 surface,
the very singular fibers are the limits of the singular fibers,
and the section $s$ is the limit of the section. \end{proposition}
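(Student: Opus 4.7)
The plan is to extend the elliptic fibration and the section from the central fiber to the whole family, and then identify the limits of the singular fibers via conservation of Euler characteristic. By hypothesis $f$ is Cartier on $X = X_{LR}(\ell)$, and by Lemma \ref{dlemma} its restriction to the central fiber $X_0 = X_{0,LR}(\ell)$ is the fiber class of the elliptic fibration $\phi_0 \colon X_0 \to \mathcal{B}_0$ from Definition \ref{def:sing-fibered}. First I would show that $\mathcal{O}_X(nf)$ is relatively globally generated over $C$ for $n \gg 0$: its restriction to each component $V_i$ is nef (pulled back from a ruling or fiber class by the description in Table \ref{tab:sings-on-ias}), so a vanishing argument on each $V_i$ together with the Mayer--Vietoris sequence for the dual complex gives $H^1(X_0, nf) = 0$. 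Cohomology and base change then lifts a basis of sections from $X_0$ to $X$ and produces a relative morphism $\phi \colon X \to \mathcal{B}$ over $C$ whose restriction to the central fiber is $\phi_0$.

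Next I would identify the general fiber of $\phi$. After normalizing $\mathcal{B}$ (and contracting any superfluous components of $\mathcal{B}_0$), the generic fiber $\mathcal{B}_t$ is $\mathbb{P}^1$, and a generic fiber of $\phi_t \colon X_t \to \mathbb{P}^1$ has arithmetic genus one since, by continuity, it specializes to a generic fiber of $\phi_0$, which is a smooth elliptic curve obtained by gluing the vertical cycle of rational curves of $X_0$. Thus $\phi_t$ is an elliptic fibration on the smooth K3 $X_t$. Applying the same extension argument to the Cartier divisor $s$, which by Lemma \ref{dlemma} glues to a section of $\phi_0$ meeting each vertical fiber once, produces a horizontal divisor on $X$ whose restriction to $X_t$ is a section of $\phi_t$.

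Finally, for the singular fibers I would invoke conservation of topological Euler characteristic: a smooth elliptic K3 with section satisfies
$$e(X_t) = 24 = \sum_{p} e(\phi_t^{-1}(p)),$$
summed over the singular values of $\phi_t$. The singular locus of $\phi_0$ lies above the nodes of $\mathcal{B}_0$, i.e.\ above the base-points of the vertical lines of $R_{\ia}$ through the singularities of $B_{LR}(\ell)$. A case check using Tables \ref{tab:sings-on-ias} and \ref{tab:divs-on-ias} shows that the very singular fiber of $\phi_0$ above each such node has topological Euler characteristic equal to the total charge $Q$ of the singularities on that vertical line, and these charges sum to $24$. Semicontinuity of $e$ then forces every singular fiber of $\phi_t$ to specialize onto a very singular fiber of $\phi_0$, and no singular fiber can degenerate onto a smooth $\mathrm{I}_0$ fiber of $\phi_0$.

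The main obstacle is the extension step: proving that $|nf|$ on the non-normal central fiber lifts to a relative morphism and that the image family $\mathcal{B}$ has generic fiber $\mathbb{P}^1$. This is essentially a Shepherd-Barron-type argument, relying on the vanishing on each anticanonical pair and on the flatness of $\phi$. The Euler-characteristic bookkeeping is then purely combinatorial, amounting to verifying the charge-equals-Euler-characteristic identification uniformly across all singularity types $X_k$, $\iX_4$, $Y_k$, $\iY_2$, and $I_k$ in Table \ref{tab:sings-on-ias}.
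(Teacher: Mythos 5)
Your overall strategy --- deform $f$ and $s$ to nearby fibers, then count the singular fibers of the resulting elliptic fibration on $X_t$ --- matches the paper's, but you take a heavier route to the first step. The paper never constructs the relative morphism $X\to\mathcal{B}$: it only uses $\chi(\mathcal{O}(s))=1$, $\chi(\mathcal{O}(f))=2$, the vanishing of $h^1$ on every fiber, and cohomology and base change to conclude that $s$ and every vertical cycle $f$ are flat limits of curves on $X_t$, and then the local analytic model of the smoothing to see that nodes lying on double curves are smoothed. That lighter argument already yields everything your fibration $\phi$ is used for, and it sidesteps the delicate question of what $\phi_0$ does over the nodes of $\mathcal{B}_0$ (where it is not flat).

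The genuine gap is in your final counting step. A generic fiber of $\phi_0$ is a cycle of rational curves, not a smooth elliptic curve, and its topological Euler characteristic equals the length of the cycle rather than zero; similarly, the Euler characteristic of a very singular fiber of $\phi_0$ is the cycle length plus the number of nodes off the double locus, not the charge $Q$. So semicontinuity of $e$ computed on fibers of $\phi_0$ does not perform the bookkeeping you want: the quantity that matches the charge is the number of nodes of the limit curve \emph{not} lying on the double locus of $X_0$ (the only nodes that survive the smoothing), and identifying it requires the local analytic model, not $e$ of the central fiber. More seriously, at an end of type $2$ or $3$ with $\ell_1>0$ the very singular fibers include two \emph{non-reduced} (doubled) components tangent to the bisection; their reduced supports have no nodes off the double locus at all, so neither an Euler-characteristic count nor a node count assigns them the correct multiplicity. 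A global count only shows that these two components absorb four singular fibers in total; to split this as $2+2$ the paper invokes a separate monodromy argument, and a further degeneration argument handles the non-generic configurations where charges collide. Your closing claim that what remains is purely combinatorial bookkeeping across the singularity types is therefore not correct for these cases.
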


\begin{proof} Let $f$ be some fiber.
Since we keep $s$ and $f$ Cartier, there are line bundles $L_s$ and $L_f$ on
$X_{LR}(\ell)$ which when restricted to the central
fiber are $\mathcal{O}(s)$ and $\mathcal{O}(f)$ respectively.
By constancy of the Euler characteristic, $\chi(\mathcal{O}(s))=1$ and
$\chi(\mathcal{O}(f))= 2$. 
Since 
$h^0(\cO(s))=1$, $h^0(\cO(f))=2$ and
$h^0(\cO(-s))=h^0(\cO(-f))=0$
on every fiber,
it follows from Serre duality that
$h^1(\mathcal{O}(s))=h^1(\mathcal{O}(f))=0$ on every fiber.
By Cohomology and Base Change
\cite[III.12.11]{hartshorne1977algebraic-geometry} we conclude that 
$H^0(L_s)$ and $H^0(L_f)$ surject onto the corresponding
spaces of sections on the central fiber. Thus, we can ensure that $s$ and
$f$ are flat limits of curves.
Note that for any choice of $f$, the line bundle $L_f$ is the
same on the general fiber, and so any $f$ is the limit of a section
from the same linear system.

A local analytic model of the smoothing shows that
any simple node of a fiber of
$X_{0,LR}(\ell)\rightarrow \mathbb{P}^1\cup\cdots\cup\mathbb{P}^1$
lying on a double curve gets smoothed. So any representative of
 $f$ which is not very singular is the limit of a smooth genus $1$
curve: Each node lies on the double locus.
Similarly, the nodes of $s$ are 
necessarily smoothed to give a smooth genus $0$ curve.
So the general fiber of $X_{LR}(\ell)$ is an elliptic K3 surface with
fiber and section classes $f$ and $s$.

Thus,
the only fibers which can be limits of singular fibers of the elliptic fibration
are the very singular fibers. If the ends are $L,R=1$, the generic choice
of $X_{0,LR}(\ell)$ has $24$ distinct very singular fibers with only one
node not lying on a double curve. Hence they must be limits of at worst $I_1$
Kodaira fibers on a smoothing. By counting,
each very singular fiber is the flat limit of an $I_1$ fiber.

It remains to show that the when $\ell_1>0$ for end type $L$ or $R=2,3$
the two non-reduced vertical components of $R$ are each limits of two singular fibers.
This again follows from counting, 
along with a monodromy argument which shows these two components
of $R$ must be limits of an equal number of singular fibers.

Finally when $X_{0,LR}(\ell)$ is not generically chosen,
is it a limit of such. This allows us to determine the
multiplicities in all cases. \end{proof}

\begin{remark} A consequence of Proposition \ref{limit} is that
on any degeneration of elliptic K3 surfaces, the limit of any individual fiber or the section
in the divisor or stable model is Cartier
(though a priori, only the limit of $s+m\sum f_i$ need be Cartier).  \end{remark}
 
 \subsection{The monodromy theorem}
 \label{sec:monodromy-thm}
 
We begin with a well-known result on the monodromy of Kulikov/nef models:

\begin{theorem}[\cite{friedman1986type-III}]\label{thm:fs} Let $X\to C$ be a Type II or III degeneration
of $\mathbb{M}$-lattice polarized K3 surfaces.
Then the logarithm of monodromy 
on $H^2(X_t)$ of a simple loop enclosing $0\in C$ has the form 
$\gamma\mapsto (\gamma \cdot \delta)\lambda -(\gamma\cdot \lambda)\delta$
for $\delta$ isotropic, $\delta\cdot \lambda=0$, and
$\lambda^2=\#\{\textrm{triple points of }X_0\}.$ Furthermore
$\lambda, \delta\in \mathbb{M}^\perp$.
There is a homomorphism $\overline{\mathbb{L}}\rightarrow \{\delta,\lambda\}^\perp/\delta$
which is an isometry and respects $\mathbb{M}$. \end{theorem}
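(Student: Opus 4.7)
The plan is to apply the standard machinery of the nilpotent orbit theorem and the Clemens--Schmid exact sequence to a Kulikov semistable degeneration $X\to C$, and then to identify the combinatorial structure of the monodromy with the lattice of numerically Cartier divisors $\overline{\mathbb{L}}$ on $X_0$.

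First, I would use Landman's theorem together with the Kulikov property $K_X\sim_C 0$ to control the nilpotency index of $N=\log T$: for Type II one has $N^2=0$, while for Type III one has $N^3=0$ with $N^2\neq 0$. Combined with the limit mixed Hodge structure on $H^2(X_t,\Q)$, this forces a very restricted weight filtration with $\dim W_0=1$ in the Type III case, spanned by an isotropic class that I will call $\delta$ (it is the class of the ``monodromy-invariant'' fiber, i.e.\ the image of $N^2$, lifted from $W_0\subset \ker N$). The composite $N^2\colon W_4/W_3\to W_0$ is an isomorphism, and pairing with the K3 intersection form on $W_4/W_3$ gives a canonical dual generator $\lambda\in H^2(X_t,\Q)$ with $\delta\cdot\lambda=0$. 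Substituting into the formula $N(\gamma)=\alpha(\gamma)\lambda+\beta(\gamma)\delta$ (forced by $N(\ker N)=0$ and $\dim\mathrm{Im}\, N=2$) and using symmetry of $N$ with respect to the intersection form (Steenbrink) pins the coefficients and gives $N\gamma=(\gamma\cdot\delta)\lambda-(\gamma\cdot\lambda)\delta$.

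Next I would compute $\lambda^2$ via Steenbrink's description of $N^2$: on the graded piece $W_4/W_3$, the operator $N^2$ is identified with the cup product of the combinatorial $2$-cocycle counting triple points of $X_0$. Explicitly, $\lambda^2=(N\lambda)\cdot\lambda/(\text{something})$ reduces, after normalizing so $\delta$ is primitive, to the number of $0$-simplices in the dual complex $\Gamma(X_0)$ that are triple points, matching the Friedman formula. The assertion $\lambda,\delta\in\M^\perp$ is then immediate from the fact that the $\M$-polarization extends to a flat family of classes on the total space, hence lies in $\ker N$, and the image of $N$ is contained in $\delta^{\perp}\cap\lambda^{\perp}$ after orthogonalizing against the invariants.

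Finally, for the isometry $\overline{\mathbb{L}}\to \{\delta,\lambda\}^\perp/\delta$, I would use the Mayer--Vietoris (Clemens--Schmid) exact sequence to identify $H^2(X_0,\Q)$ with the primitive cohomology of $X_t$ invariant under $T$, modulo the image of $N$. The numerical Cartier lattice $L=\ker(\oplus\Pic V_i\to\oplus\Pic D_{ij})$ maps to this invariant cohomology by restricting a flat family of line bundles to the general fiber, and the kernel of this map is precisely the subspace $\Xi=\oplus\Z\xi_i/(\sum\xi_i)$ (these are the classes that arise from changing the gluing, equivalently the vanishing cycles, equivalently $\delta\bmod\text{torsion}$). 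Intersection numbers on each component add up to the K3 intersection form on the limit, so the induced map on $\overline{\mathbb{L}}$ is an isometry into $\delta^\perp/\delta$ whose image is killed by pairing with $\lambda$ (because $\lambda$ pairs with the ``height'' in the cycle direction, which is trivial on divisorial classes). The main obstacle is this last step: carefully matching the combinatorial intersection pairing on $\overline{\mathbb{L}}$ with the limit pairing on $\{\delta,\lambda\}^\perp/\delta$ requires tracking signs in Mayer--Vietoris and checking that $\Xi$ is saturated modulo torsion; this is precisely what occupies \cite{friedman1986type-III}, and I would follow that argument rather than re-derive it.
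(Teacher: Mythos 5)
The paper does not prove this statement: it is quoted directly from Friedman--Scattone \cite{friedman1986type-III}, so there is no in-paper argument to compare against. Your outline --- Landman/nilpotent-orbit control of the nilpotency index, the limit mixed Hodge structure plus the symmetry of $N$ with respect to the intersection form to pin down the formula $N\gamma=(\gamma\cdot\delta)\lambda-(\gamma\cdot\lambda)\delta$, the triple-point count for $\lambda^2$, and Clemens--Schmid to produce the isometry on $\overline{\mathbb{L}}$ --- is precisely the route taken in that reference, and you appropriately defer the delicate lattice-theoretic bookkeeping (saturation of $\Xi$, sign conventions in Mayer--Vietoris) to it rather than re-deriving it.
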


To compute the monodromy invariant $\lambda$ of the degeneration $X_{LR}(\ell)$
requires constructing an explicit basis of the lattice $\delta^\perp/\delta$, to coordinatize
the cohomology.

\begin{definition} Let $B$ be a generic $\ias$. A {\it visible surface} is a $1$-cycle valued in
the integral cotangent sheaf $T^*_\Z B$. Concretely, it is a collection of paths $\gamma_i$
with constant covector fields $\alpha_i$ along $\gamma_i$ such that at the boundaries of the
paths, the vectors $\alpha_i$ add to zero in $T^*_\Z B$. When the paths $\gamma_i$ are incident
to an $I_1$ singularity, the covectors $\alpha_i$ must sum to a covector vanishing on the
monodromy-invariant direction. Such a visible surface is notated $\gamma=\{(\gamma_i, \alpha_i)\}$.
 \end{definition}
 
 \begin{example}\label{ex:simple-vis} The simplest example of a
 visible surface is a path connecting two $I_1$ singularities
 with parallel monodromy-invariant lines (under parallel transport along the path). Another example
 is an integral-affine divisor $R_{\ia}$: It is the special case where the paths are straight lines $e_{ij}$
 and the cotangent vector field is $n_{ij}$ times the primitive integral
 covector vanishing along the corresponding edge. \end{example}
 
Following \cite{symington2003four-dimensions}, if $B$ is a generic $\ias$, there is a symplectic
four-manifold $(S,\omega)$ diffeomorphic to a K3 surface, together with
$\mu\colon (S,\omega)\rightarrow B$
a Lagrangian torus fibration over $B$ that has $24$ singular fibers
over the $I_1$ singularities. From a visible surface
$\gamma$ one can build from cylinders a surface $\Sigma_\gamma\subset S$ 
fibering over $\gamma$. Its fundamental class is well-defined in $F^\perp/F$,
where $F=[\mu^{-1}({\rm pt})]$ is the Lagrangian fiber class. Its symplectic area can be computed as
$$[\omega]\cdot [\Sigma_\gamma]=\sum_i \int_{\gamma_i} \alpha_i(\gamma_i'(t))\,dt$$
and so in particular, for any integral-affine divisors $R_{\ia}$ we have $[\omega] \cdot [\Sigma_{R_{\ia}}]=0.$
 Furthermore, the symmetric bilinear form
 $$\gamma\cdot \nu=\{(\gamma_i,\alpha_i)\}\cdot \{(\nu_j,\beta_j)\}
 := \sum_{p\in \gamma\,\cap\, \nu } (\gamma_i\cdot \nu_j)_p \det(\alpha_i,\beta_j)_p$$
 agrees with the intersection number $[\Sigma_\gamma]\cdot [\Sigma_\nu]$ in $F^\perp/F$.
 The relevance of the symplectic geometry
lies in the following theorem:

 \begin{theorem}[Monodromy Theorem]\cite[Prop.3.14]{engel2021smoothings}, 
\cite[Thm.8.38]{alexeev2019stable-pair}
\label{thm:mono}  Suppose that $B=\Gamma(X_0)$ is
 generic and the dual complex of a Type III Kulikov model. There is a symplectic K3
 manifold $S$ with a Lagrangian torus fibration over $B$, and
 a diffeomorphism $\phi\,:S\rightarrow X_t$ to a nearby smooth fiber such that
 \begin{enumerate}
 \item $\phi_*[F] = \delta$
 \item $\phi_*[\omega] = \lambda$
 \end{enumerate}
Furthermore, if $R$ is an integral-affine divisor, then $R$ determines both
an element $[R]\in \mathbb{L}$ and a visible surface $\Sigma_R\subset S$. The image of
$[R]$ under the map $\overline{\mathbb{L}}\rightarrow \{\delta,\lambda\}^\perp/\delta$
from Theorem \ref{thm:fs}
is the same as $\phi_*[\Sigma_R]$.
 \end{theorem}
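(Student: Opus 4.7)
The plan is to construct $\phi$ by matching two torus fibrations over $B$: the Clemens collapse of the Kulikov degeneration (a $T^2$-fibration of the nearby smooth fiber $X_t$ over $B=\Gamma(X_0)$, degenerating in a controlled way over edges and vertices of the triangulation), and Symington's Lagrangian torus fibration $\mu\colon(S,\omega)\to B$ arising from the integral-affine structure. Since $B$ is generic, both fibrations have the same discriminant locus (the $24$ charge-one singularities), the same local singular fiber models (focus-focus), and the same integral-affine monodromy. They are therefore fiberwise diffeomorphic over $B$, and any such equivalence provides the required $\phi\colon S\to X_t$.

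For item (1), the Clemens torus fiber vanishes as $t\to 0$, so its class is the vanishing cycle of the one-parameter family; Friedman-Scattone's formula in Theorem~\ref{thm:fs} characterizes $\delta$ as (up to sign) the unique primitive isotropic class annihilated by the log-monodromy, so $\phi_*[F]=\delta$. For item (2), compute $\phi_*[\omega]^2 = 2\,\mathrm{Vol}(B)$ by a Duistermaat--Heckman computation on the Lagrangian fibration; because $B$ is triangulated into unit-lattice-volume triangles, this agrees with the number of triple points of $X_0$, which equals $\lambda^2$ by Theorem~\ref{thm:fs}. Together with $\phi_*[\omega]\cdot\delta=0$ (the Lagrangian condition $\omega|_F=0$) and $\phi_*[\omega]\in\mathbb{M}^\perp$ (the polarizing classes in $\mathbb{M}$ arise as visible surfaces on which $\omega$ integrates to zero), these three constraints pin down $\phi_*[\omega]$ in the coset $\lambda+\bZ\delta$ inside $\delta^\perp/\delta$.

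For the final statement, note that $\{\delta,\lambda\}^\perp/\delta$ is a non-degenerate lattice, so it suffices to match intersection numbers. Given integral-affine divisors $R,R'$ with associated visible surfaces $\Sigma_R,\Sigma_{R'}$ and classes $[R],[R']\in L$, the algebraic pairing $[R]\cdot[R']=\sum_i L_i\cdot L'_i$ is computed component by component, while the symplectic pairing $[\Sigma_R]\cdot[\Sigma_{R'}]$ is computed via the displayed formula in terms of the paths and covector fields. A local calculation at each vertex $v_i$, using the pseudo-fan identification of the integral-affine singularity with $\mathfrak{F}(V_i,D_i)$ from Definition~\ref{def:true-sing}, shows that visible-surface intersections projecting to $v_i$ contribute exactly $L_i\cdot L'_i$; contributions from interior intersections of edges vanish for generic $R,R'$ by the balancing condition. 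Hence the two pairings agree and the classes coincide in $\{\delta,\lambda\}^\perp/\delta$.

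The main obstacle is the detailed local analysis at the singular fibers of the torus fibration, where one must link the symplectic picture (focus-focus singularities, integral-affine monodromy) to the algebro-geometric picture (anticanonical pairs $(V_i,D_i)$ with their pseudo-fans, and the Gross-Hacking-Keel style smoothings producing the nearby fiber $X_t$). In particular, upgrading from a purely topological matching of the two torus fibrations to one that carries the symplectic class $[\omega]$ faithfully requires a reconstruction in the spirit of Kontsevich-Soibelman and Gross-Siebert; this is exactly where the preceding papers \cite{engel2019smoothings} and \cite{alexeev2019stable-pair} do the technical work, and I would cite those results rather than redoing the local models from scratch.
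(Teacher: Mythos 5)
First, a point of comparison: the paper does not actually prove Theorem~\ref{thm:mono} --- it is imported verbatim from \cite[Prop.3.14]{engel2019smoothings} and \cite[Thm.8.38]{alexeev2019stable-pair} --- so there is no internal proof to measure you against. Your sketch does follow the broad strategy of those references (match Symington's Lagrangian torus fibration over $B$ with the Clemens collapse of the Kulikov family, using genericity to get $24$ focus--focus fibers on both sides), and your volume computation $\phi_*[\omega]^2 = 2\operatorname{Vol}(B) = \#\{\text{triple points}\} = \lambda^2$ is correct. But two of your ``pinning down'' steps have genuine gaps.

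The first gap is in item (2): the three constraints you list --- $\phi_*[\omega]^2=\lambda^2$, $\phi_*[\omega]\cdot\delta=0$, and $\phi_*[\omega]\in\mathbb{M}^\perp$ --- do \emph{not} determine $\phi_*[\omega]$. The set of classes of a fixed positive square in $\{\delta\}^\perp/\delta\cap\mathbb{M}^\perp$, a hyperbolic lattice of rank $18-\operatorname{rk}\mathbb{M}$, is an infinite union of orbits, and nothing in your argument forces such a class into the coset $\lambda+\mathbb{Z}\delta$. (A related slip: $\delta$ is not the unique primitive isotropic class annihilated by the log-monodromy $N$; since $N(\gamma)=(\gamma\cdot\delta)\lambda-(\gamma\cdot\lambda)\delta$, the kernel of $N$ is all of $\{\delta,\lambda\}^\perp$, which contains many isotropic vectors. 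The correct characterization is that $\delta$ generates the image of $N^2$.) Identifying $\phi_*[\omega]$ with $\lambda$ genuinely requires pairing $[\omega]$ against a full basis of $\delta^\perp/\delta$ realized by visible surfaces, i.e.\ the hard local analysis you defer to the references; it cannot be shortcut by the three numerical constraints. The second gap is in the last assertion: verifying $[R]\cdot[R']=[\Sigma_R]\cdot[\Sigma_{R'}]$ shows only that $R\mapsto\phi_*[\Sigma_R]$ is an isometry on the sublattice spanned by integral-affine divisors. Two isometric maps into the nondegenerate lattice $\{\delta,\lambda\}^\perp/\delta$ need not coincide, so this does not identify $\phi_*[\Sigma_R]$ with the image of $[R]$ under the specific Friedman--Scattone map $\overline{\mathbb{L}}\to\{\delta,\lambda\}^\perp/\delta$ of Theorem~\ref{thm:fs}; one must compare the two maps directly, e.g.\ by checking that both send $[R]$ to the class swept out over $R$ in the Clemens collapse. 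You are candid that the technical core lives in the cited papers, which is the right instinct, but as written the proposal asserts two determinations that its stated constraints do not force.
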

  
 By choosing a collection of visible surfaces $\gamma$, we may produce
 coordinates on the lattice $\delta^\perp/\delta$ which allow us to determine
 how the classes $\lambda$ sit relative to various classes. But, to employ
 this technique for general $X_0$
 we must first factor all singularities with charge $Q>1$ into $I_1$
 singularities, and only then apply the Monodromy Theorem.
 We describe this process when all $\ell_i>0$ but the general case
 follows from a limit argument.
 
Consider $B_{LR}(\ell)$. Let $f_{\ia}$ and $s_{\ia}$ be the
 integral-affine divisors corresponding
 to the fiber $f$ and section $s$ of $X_{0,LR}(\ell)$, respectively. We have described
 in Table \ref{tab:sings-on-ias} toric models for the $Q=2$ and $Q=3$ singularities.
 We may flop all the exceptional $(-1)$-curves in these toric models in the
 smooth threefold $X_{LR}(\ell)$.
 This has the effect of blowing down these $(-1)$-curves
 and blowing up the intersection point with the double curve
 on the adjacent component. 
 In particular, the left and
 right ends of the section $s$ are $(-1)$-curves which get flopped.

By first making a base change of $X_{LR}(\ell)\rightarrow C$ and resolving to a
new Kulikov model, we may ensure that the $(-1)$-curves get flopped onto toric components.
 This gives a new Kulikov model $X_{0,LR}'(\ell)$ with $24$ distinct $I_1$ singularities.
  The effect of these modifications on the dual complex is to first refine
  the triangulation (the base change), then factor each singularity
 into $I_1$ singularities, moving each one one unit of lattice length in its
 monodromy-invariant direction (the flops). These {\it $I_1$-factorization directions}
 are listed for the various end singularities in Table \ref{tab:sings-on-ias}.
 
 \begin{definition} We define $19$ visible surfaces $\gamma_i\in \{s_{\ia},f_{\ia}\}^\perp$
 in the dual complex $\Gamma(X_{0,LR}'(\ell))$
 as follows: If $\ell_i\vec{v}_i$ connects two $I_1$ singularities,
 then $\gamma_i$ is the path along the vector $\ell_i\vec{v}_i$
 connecting them as in Example \ref{ex:simple-vis}. For $i=1,2,3$ and all end behaviors,
 the visible surfaces $\gamma_i$ are uniquely defined by the following properties:
 \begin{enumerate}
 \item $\gamma_i$ is supported on the edge $\ell_i\vec{v}_i$ and the segments along
 which the $I_1$-factorization occurs of the singularities at the two ends of $\ell_i\vec{v}_i$.
\item The support of $\gamma_1$ does not contain the $I_1$-factorization
 direction corresponding to the section $s$.
 \item $\gamma_i$ is integral, primitive, and
 $[\omega]\cdot \Sigma_{\gamma_i}$ is a positive integer multiple of $\ell_i$.
 \end{enumerate}

 \end{definition}
 
\begin{example} The visible surface $\gamma_1$ has
 weights $-1,0,1$ along
 the $I_1$ factorization directions $(1,-3),(1,0),(1,3)$ respectively
of $X_3$ and is balanced by a unique choice of covector along the edge
$\ell_1\vec{v}_1$. Here the ``weight" is the multiplicity of the primitive covector
vanishing on the monodromy-invariant direction of the $I_1$ singularity
at the end of the segment. The covector that $\ell_1\vec{v}_1$ carries ends up
being three times the primitive covector vanishing on the monodromy-invariant
direction at the endpoint
of $\ell_1\vec{v}_1$. \end{example}

As we are henceforth concerned only with intersection numbers,
we lighten the notation by simply writing $\gamma$ for $\phi_*[\Sigma_\gamma]$.
 
 \begin{proposition}\label{vis-int} The classes $\lambda=\phi_*[\omega]$ and $\gamma_i$
 lie in $\{s,f\}^\perp$ and their
 intersection matrices for the three end behaviors are:
 
 \begin{table}[H]
  \begin{tabular}[htp]{c|ccc}
   $L=1$   &$\gamma_1$ &$\gamma_2$ & $\gamma_3$ \\
    \hline
  $\lambda$ 	 & $3\ell_1$ & $\ell_2$ & $\ell_3$ \\
  $\gamma_1$   &  $-8$ & $3$ & $0$ \\
  $\gamma_2$   & $3$ & $-2$ & $1$  \\
  $\gamma_3$   & $0$ & $1$ & $-2$ \\
  \end{tabular}\hspace{5pt}
   \begin{tabular}[htp]{c|ccc}
     $L=2$   &$\gamma_1$ &$\gamma_2$ & $\gamma_3$ \\
    \hline
  $\lambda$ 	& $2\ell_1$ & $2\ell_2$ & $\ell_3$ \\
  $\gamma_1$   & $-8$ & $2$ & $0$ \\
  $\gamma_2$   & $2$ & $-4$ & $2$ \\
  $\gamma_3$   & $0$ & $2$ & $-2$ \\
  \end{tabular}
  \hspace{5pt}
     \begin{tabular}[htp]{c|ccc}
     $L=3$   &$\gamma_1$ &$\gamma_2$ & $\gamma_3$ \\
    \hline
  $\lambda$ & $\ell_1$ & $2\ell_2$ & $\ell_3$ \\
  $\gamma_1$   & $-2$ & $1$ & $0$  \\
  $\gamma_2$   & $1$ & $-4$ & $2$ \\
  $\gamma_3$   & $0$ & $2$ & $-2$ \\
  \end{tabular}
  \end{table}
\noindent We also have $\gamma_i\cdot \gamma_{i-1}=1$, $\gamma_i^2=-2$,
  $\lambda\cdot \gamma_i = \ell_i$ for $i\geq 4$ until the right end.
  \end{proposition}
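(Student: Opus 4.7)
The strategy is to reduce every intersection number in the statement to a combinatorial computation on $B_{LR}(\ell)$ via the Monodromy Theorem (Theorem \ref{thm:mono}). First I would pass from $X_{0,LR}(\ell)$ to the $I_1$-factorization $X_{0,LR}'(\ell)$ constructed in the paragraph preceding the statement, so that the dual complex becomes a \emph{generic} $\ias$ and the Monodromy Theorem applies directly; the visible surfaces $\Sigma_{\gamma_i}$, $\Sigma_{s_\ia}$, $\Sigma_{f_\ia}$ and the symplectic class $[\omega]$ are unaffected by this modification, so all intersection numbers are preserved. Under the identification $\overline{\mathbb{L}} \to \{\delta,\lambda\}^\perp/\delta$, $\lambda = \phi_*[\omega]$, and the intersection pairings on visible surfaces coincide with the cup product in $F^\perp/F$.

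For the row $\lambda \cdot \gamma_i$, I would use the symplectic-area formula $[\omega]\cdot[\Sigma_\gamma] = \sum_i \int_{\gamma_i}\alpha_i(\gamma_i'(t))\,dt$. For $i\ge 4$, the path is the straight segment along $\ell_i \vec{v}_i$ carrying the primitive covector annihilating $\vec{v}_i$, and the integral evaluates to $\ell_i$. For the three special surfaces $\gamma_1,\gamma_2,\gamma_3$ at the left end, the covector on the main edge $\ell_i \vec{v}_i$ is forced by the balancing condition at the $I_1$-factored endpoints together with the defining condition (2) (integrality, primitivity, and positive symplectic area proportional to $\ell_i$); working this out case by case for $L=1,2,3$ using the factorization directions from Table \ref{tab:sings-on-ias} yields the stated entries $(3\ell_1,\ell_2,\ell_3)$, $(2\ell_1,2\ell_2,\ell_3)$, $(\ell_1,2\ell_2,\ell_3)$.

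For the $\gamma_i\cdot\gamma_j$ block I would use the intersection formula $\gamma\cdot\nu = \sum_{p\in\gamma\cap\nu}(\gamma_i\cdot\nu_j)_p\det(\alpha_i,\beta_j)_p$. Adjacent surfaces $\gamma_i$ and $\gamma_{i-1}$ (for $i\ge 4$) share exactly one $I_1$ singularity, where the determinant of the two primitive covectors equals $1$, giving $\gamma_i\cdot\gamma_{i-1}=1$; self-intersections $\gamma_i^2=-2$ arise from the two endpoints contributing $-1$ each. The special surfaces $\gamma_1,\gamma_2,\gamma_3$ pick up additional contributions from the $I_1$-factorization points replacing the end singularity: summing $\det$-contributions across the $Q$ factorization directions of $X_3,X_4$, or $\iX_4$ gives the ``diagonal'' entries $-8,-2,-4$ appearing in each table. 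Orthogonality to $s$ and $f$ follows since $s_\ia$ and $f_\ia$ are integral-affine divisors (hence visible surfaces for $s$ and $f$): the supports are arranged so that $\gamma_i$ either avoids $s_\ia$ (the horizontal equator), in the case $i\ge 4$, or by defining condition (1) excludes the section factorization direction, for $i=1,2,3$, and vertical crossings with $f_\ia$ cancel by the balancing of covectors at the $I_1$ singularities.

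\textbf{Main obstacle.} The delicate part is the self-intersection computation for $\gamma_1,\gamma_2,\gamma_3$ at the left end: after $I_1$-factorization, a single charge-$Q$ singularity becomes $Q$ distinct $I_1$ singularities, each contributing to $\gamma_i^2$ with a weight determined \emph{implicitly} by the balancing condition against the covector on $\ell_i \vec{v}_i$ and the specific multiset of rays from Table \ref{tab:sings-on-ias}. Extracting the stated integer entries requires carefully tracking these weights and $\det$-pairings across all three end types; the bookkeeping differs for $L=1$ (three factorization directions, one end segment $\gamma_1$ of ``long-range'' type) versus $L=2,3$ (four factorization directions, a two-segment ``boundary chain'' $\gamma_1,\gamma_2$), which is why the three tables disagree in their first two rows but agree from $\gamma_3$ onward.
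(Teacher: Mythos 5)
Your outline follows the paper's own proof essentially step for step: pass to the $I_1$-factored model so that Theorem~\ref{thm:mono} applies, compute the row $\lambda\cdot\gamma_i$ from the symplectic-area formula, compute the block $\gamma_i\cdot\gamma_j$ from the combinatorial intersection form on visible surfaces, and get orthogonality to $s,f$ from disjointness of supports (for the $\gamma_i$) and from the fact that $s_\ia,f_\ia$ are integral-affine divisors (for $\lambda$). The paper's proof is no more explicit than yours about the end-of-diagram bookkeeping you flag as the main obstacle, so on the level of strategy there is nothing to criticize.

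There is, however, one step that as written would fail. For $i\ge4$ you say the segment $\ell_i\vec{v}_i$ carries ``the primitive covector annihilating $\vec{v}_i$'' and that the area integral then evaluates to $\ell_i$. These two claims contradict each other: if $\alpha(\vec{v}_i)=0$ then $\int\alpha(\gamma_i'(t))\,dt=0$, and you would get $\lambda\cdot\gamma_i=0$ for all $i\ge4$. The balancing condition in the definition of a visible surface (cf.\ Example~\ref{ex:simple-vis}) forces $\alpha$ to annihilate the \emph{monodromy-invariant directions of the $I_1$ singularities at the two endpoints} of the segment, which by Table~\ref{tab:sings-on-ias} are vertical; hence $\alpha$ is the primitive covector killing $(0,1)$, i.e.\ $dx$, and the integral is $dx(\ell_i\vec{v}_i)=\ell_i$ precisely because each $\vec{v}_i$ ($2\le i\le18$) has first coordinate $1$. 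You appear to have pattern-matched to the integral-affine-divisor case, where the covector \emph{is} conormal to the edge and where, consistently, the symplectic area vanishes. Your treatment of $\gamma_1,\gamma_2,\gamma_3$ does invoke the balancing condition correctly, so the slip is localized, but it is exactly the mechanism producing the first row of each table, so it needs to be fixed. (A very minor further point: the three tables do not quite ``agree from $\gamma_3$ onward''---the entry $\gamma_2\cdot\gamma_3$ is $1$ for $L=1$ and $2$ for $L=2,3$.)
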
 
  
  \begin{proof} Because the weight of the visible surface $\gamma_1$
  along the edge corresponding to $s_{\ia}$ is always zero, so
  we have $\Sigma_{\gamma_1} \cdot \Sigma_{s_{\ia}}=0$. The other $\gamma_i$ 
  are also disjoint from $s_{\ia}$. Furthermore, all $\gamma_i$ are disjoint
  from some fiber $f_{\ia}$ and hence $\Sigma_{\gamma_1} \cdot \Sigma_{f_{\ia}}=0$.
  Because $s_{\ia}$ and $f_{\ia}$ are integral-affine divisors, we have
  $[\omega] \cdot \Sigma_{f_{\ia}}  = [\omega]\cdot \Sigma_{s_{\ia}}  =0$.
  More generally, the formula $\int_{\Sigma_\gamma} \omega =\sum \int \alpha_i(\gamma_i'(t))\,dt$
  allows us to compute $[\omega]\cdot \Sigma_{\gamma_i}$ for all $i$.
  The other intersection numbers $\Sigma_\gamma \cdot \Sigma_\nu$ 
  can be computed via the defined intersection form $\gamma\cdot \nu$ on visible surfaces.
  Applying $\phi_*$ to the aforementioned classes
  preserves their intersection numbers, giving the tables above.
    \end{proof}

\begin{corollary}\label{identifygamma}
  After an isometry in $\Gamma$,
  the classes $\gamma_i \in \{s,f\}^\perp$ are:
\begin{align*}
&L=1 && \gamma_1 = -\beta_L,&& && && \gamma_i=\alpha_i \textrm{ for }i\geq 2 \\
& L=2 && \gamma_1 = \beta_L, && \gamma_2 = -\gamma_L, && && \gamma_i=\alpha_i \textrm{ for } i\geq 3 \\
& L=3 && \gamma_1 = \alpha_2, && \gamma_2 = \gamma_L, && \gamma_3=\alpha_1, && \gamma_i=\alpha_i \textrm{ for } i\geq 4.
\end{align*}
\end{corollary}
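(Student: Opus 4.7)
The plan is to verify the identification by matching intersection data: compute the pairing matrix of the proposed right-hand side vectors $\alpha_i$, $\beta_L$, $\gamma_L$ in $\IIell$ from the Coxeter diagram, show it coincides with the pairing matrix of the $\gamma_i$ computed in Proposition~\ref{vis-int}, and then invoke Witt's extension theorem to produce an isometry in $\Gamma$ effecting the identification.

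I would first read off from Fig.~\ref{fig:coxeter} that $\alpha_1$ attaches to the main Coxeter chain at $\alpha_4$ (the node of mark $6$), so $\alpha_1\cdot\alpha_2 = \alpha_1\cdot\alpha_3 = 0$ while $\alpha_2\cdot\alpha_3 = 1$ on the short branch at $\alpha_3$. Direct expansion of $\beta_L = \alpha_3 + 2\alpha_2 - \alpha_1$ and $\gamma_L = \alpha_3 - \alpha_1$ then yields $\beta_L^2 = -8$, $\gamma_L^2 = -4$, $\beta_L\cdot\gamma_L = -2$, $\beta_L\cdot\alpha_3 = 0$, $\gamma_L\cdot\alpha_3 = 2$, $\beta_L\cdot\alpha_2 = 3$, and $\gamma_L\cdot\alpha_1 = 2$. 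A case-by-case check against each of the three tables in Proposition~\ref{vis-int} shows that the stated assignment of $\gamma_1,\gamma_2,\gamma_3$ reproduces every entry exactly; for instance, for $L=2$ the pair $(\gamma_1,\gamma_2)=(\beta_L,-\gamma_L)$ gives the block $\bigl(\begin{smallmatrix}-8 & 2\\ 2 & -4\end{smallmatrix}\bigr)$ as required. For $i\geq 4$, the $\gamma_i$ form a chain of $(-2)$-classes with adjacent intersection $1$, matching the roots $\alpha_i$ along the main Coxeter chain, and the same analysis applies symmetrically at the right end.

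Next I would show the $\gamma_i$ span $\IIell$ over $\bQ$. Since there are $19$ vectors in rank $18$, there is a unique linear relation among them; this relation is recovered from the visible-surface pairings with $\lambda$ via the Monodromy Theorem, and must coincide with the Coxeter relation~\eqref{eq:coxeter}. Together with the intersection matching, Witt's extension theorem for the even unimodular lattice $\IIell$ then extends the map $\gamma_i\mapsto$ (right-hand side vector) to an isometry $\phi\in O(\IIell)$. To arrange $\phi\in \Gamma = O^+(\IIell)$, one composes with the non-trivial element of $O(\IIell)/\Gamma$ if needed; the correct orientation is forced by the requirement that $\lambda = \phi_*[\omega]$ lie in the positive cone component distinguished by the symplectic form, which is compatible with all the $\lambda\cdot\gamma_i = \ell_i \ge 0$.

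I expect the main obstacle to be pinning down the linear relation among the $\gamma_i$ as precisely the relation \eqref{eq:coxeter}. This is essentially a computation in the visible-surface homology inside $\{s,f\}^\perp/\delta$, tracking the $I_1$-factorization directions at the two end singularities and their compatibility with the classes $f_\ia$ and $s_\ia$ throughout. Once that is established, the remaining lattice-theoretic argument is a routine Witt-type extension.
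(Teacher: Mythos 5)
Your overall strategy --- compute the Gram matrix of the proposed target vectors from the Coxeter diagram, match it against the tables of Proposition~\ref{vis-int}, and conclude by lattice-theoretic rigidity --- is the same as the paper's, and your Gram-matrix verification is essentially right. (Two of your listed intermediate values have wrong signs: $\beta_L\cdot\alpha_2=-3$, not $3$, and $\gamma_L\cdot\alpha_3=-2$, not $2$; but since the assignments use $-\beta_L$ and $-\gamma_L$ in exactly those slots, the entries you actually need do agree with the tables, as your worked $L=2$ block shows.) Also, the ``main obstacle'' you flag is not one: the form is nondegenerate of rank $18$ on the span of the $19$ classes, so the unique linear relation among them is exactly the kernel of the Gram matrix and is forced once the Gram matrices agree --- no separate visible-surface computation is required.

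The genuine gap is in the final lattice-theoretic step. Matching Gram matrices gives a well-defined isometry between the \emph{integral spans} of the two $19$-tuples; spanning $\IIell$ over $\bQ$ is automatic and not sufficient. Witt's extension theorem is a statement about quadratic spaces over fields, and promoting an isometry of sublattices to an element of $O(\IIell)$ requires either that both integral spans equal $\IIell$ or a compatibility with the overlattice glue. This is exactly where the end behavior matters. For $L=1$ one has $\alpha_1=\alpha_3+2\alpha_2-\beta_L$ and for $L=3$ one has $\alpha_3=\gamma_L+\alpha_1$, so the target vectors (hence the $\gamma_i$) generate all of $\IIell$ and no extension is needed. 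But for $L=2$ the targets $\beta_L,-\gamma_L,\alpha_3,\dots$ span only an index-$2$ sublattice, because $\alpha_2=\tfrac12(\beta_L-\gamma_L)$ is not in their integral span; an isometry of an imprimitive sublattice need not extend to the unimodular overlattice. The paper's proof supplies precisely the missing ingredient: $\tfrac12(\gamma_1+\gamma_2)$ is itself an \emph{integral} visible-surface class, and it is sent to the integral vector $\tfrac12(\beta_L-\gamma_L)=\alpha_2$, so the isometry of index-$2$ sublattices does extend to $\IIell$. Without this observation your argument does not close in the $L=2$ (or $R=2$) case. Your final orientation adjustment, using $\lambda\cdot\gamma_i=\ell_i\ge0$ to land in $O^+(\IIell)$, is fine.
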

 
 \begin{proof} This follows directly from Proposition \ref{vis-int}. When $L=1,3$ the
 $\gamma_i$ span a lattice isomorphic to $I\!I_{1,17}$ and hence their intersection matrix
 determines them uniquely up to isometry in $\Gamma$. When $L=2$,
 the lattice spanned by $\gamma_i$ is imprimitive
 but after adding the {\it integral} visible surface
 $\frac{1}{2}(\gamma_1+\gamma_2)$ it becomes all of $I\!I_{1,17}$ and the same logic
 applies. Note $\frac{1}{2}(\beta_L-\gamma_L)$ is also integral.\end{proof}

\begin{corollary}\label{final-monodromy} The monodromy invariant of $X_{LR}(\ell)$ is the unique
  lattice point $\lambda\in \sigma_{LR}$ whose coordinates
  $a_i=\alpha_i\cdot \lambda$, $b_L=\beta_L\cdot \lambda$, $c_L=\gamma\cdot \lambda$,
  $b_R=\beta_R\cdot \lambda$, $c_R=\gamma_R\cdot \lambda$ 
(cf. Section \ref{sec:rc-fan}) take the values

\begin{centering}

  \begin{tabular}[htp]{c | c c c | c c c | c c c | c }
    $L$ End &$\ell_1$ &$\ell_2$ &$\ell_3$ & $\cdots$ & $\ell_i$ & $\cdots$ & $\ell_{17}$ & $\ell_{18}$ & $\ell_{19}$ & $R$ End  \\
    \hline
  $1$  &$-b_L/3$ &$a_2$ &$a_3$ & & & & $a_{17}$ & $a_{18}$ & $-b_R/3$ & $1$ \\
  $2$  &$b_L/2$ &$-c_L/2$ &$a_3$ & $\cdots$ & $a_i$ & $\cdots$ & $a_{17}$ & $-c_R/2$ & $b_R/2$ & $2$ \\
  $3$ &$a_2$ &$c_L/2$ & $a_1$ & & & & $a_{19}$ & $c_R/2$ & $a_{18}$ & $3$
    \end{tabular}
  
  \end{centering}
    
\end{corollary}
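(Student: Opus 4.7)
The plan is to assemble this corollary directly from the two preceding results: Proposition~\ref{vis-int} computes the intersection numbers $\lambda \cdot \gamma_i$ in terms of the $\ell_j$, while Corollary~\ref{identifygamma} identifies each $\gamma_i$, up to an isometry in $\Gamma$, with one of the roots $\alpha_i$ or the distinguished vectors $\beta_L, \gamma_L, \beta_R, \gamma_R$. Since the linear functions $a_i$, $b_L$, $c_L$, $b_R$, $c_R$ on $\oC_\bQ$ are by definition the pairings with these roots, combining the two results immediately yields linear equations expressing each $\ell_j$ in terms of the claimed coordinates of $\lambda$, entry by entry of the table.

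First I would run through the three end behaviors separately. For $L=1$, Proposition~\ref{vis-int} gives $\lambda\cdot\gamma_1 = 3\ell_1$, and by Corollary~\ref{identifygamma} $\gamma_1 = -\beta_L$, so $-b_L = 3\ell_1$, i.e.\ $\ell_1 = -b_L/3$. For $i\ge 2$, $\gamma_i = \alpha_i$ and $\lambda \cdot \gamma_i = \ell_i$, so $a_i = \ell_i$. For $L=2$, $\gamma_1 = \beta_L$ and $\gamma_2 = -\gamma_L$, giving $\ell_1 = b_L/2$, $\ell_2 = -c_L/2$, and $\ell_i = a_i$ for $i \ge 3$. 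For $L=3$, $\gamma_1 = \alpha_2$, $\gamma_2 = \gamma_L$, $\gamma_3 = \alpha_1$, giving $\ell_1 = a_2$, $\ell_2 = c_L/2$, $\ell_3 = a_1$, and $\ell_i = a_i$ for $i \ge 4$. The right-hand computation is formally identical by the left–right symmetry of the setup.

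Next I would verify that the lattice point $\lambda$ so determined actually lies in $\sigma_{LR}$ rather than on the wrong side of some defining hyperplane. This is immediate from $\ell_i \ge 0$ and the defining inequalities of the cones $\sigma_L$ and $\sigma_R$ recorded in~\eqref{eq:lin-relations}: e.g.\ for $L=1$ one has $b_L = -3\ell_1 \le 0$; for $L=2$ one has $b_L = 2\ell_1 \ge 0$ and $c_L = -2\ell_2 \le 0$; for $L=3$ one has $c_L = 2\ell_2 \ge 0$; and analogously on the right. Uniqueness of $\lambda$ is the observation that the vectors $\gamma_1, \dots, \gamma_{19}$, together with $\tfrac12(\gamma_1+\gamma_2)$ when $L=2$ (and the analogous addition when $R=2$), span $\IIell$ by the integrality argument in the proof of Corollary~\ref{identifygamma}, so $\lambda$ is determined by its pairings with them, which are precisely the listed coordinates.

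The only mild subtlety, and thus the main bookkeeping hurdle, is the $L=2$ (or $R=2$) case where the natural basis $\{\gamma_i\}$ generates an index-$2$ sublattice of $\IIell$; one has to confirm that the half-integral combinations $\tfrac12(\beta_L-\gamma_L)$ and $\tfrac12(\beta_L+\gamma_L)$ pair integrally with $\lambda$ given the parity constraints $b_L \equiv c_L \pmod 2$ forced by $b_L = 2\ell_1$, $c_L = -2\ell_2$. This parity is built into the table (the entries are $\pm b_L/2$, $\pm c_L/2$, all integers by construction), so once one confirms $\ell_j \in \bZ_{\ge 0}$, the claimed $\lambda$ is genuinely an integral point of $\sigma_{LR}$ and the corollary follows.
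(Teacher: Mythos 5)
Your proposal is correct and follows essentially the same route as the paper: the paper's proof is exactly the one-line combination of Proposition~\ref{vis-int} (the values $\lambda\cdot\gamma_i$) with Corollary~\ref{identifygamma} (the identification of the $\gamma_i$ with $\pm\beta_L,\pm\gamma_L,\alpha_i$, etc.), which determines $\lambda$ uniquely. Your additional checks — that the resulting $\lambda$ satisfies the defining inequalities of $\sigma_{LR}$ from~\eqref{eq:lin-relations} and that the index-$2$ issue in the $L=2$ case causes no integrality problem — are left implicit in the paper but are consistent with it.
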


 \begin{proof} The monodromy invariant $\lambda = \phi_*[\omega]$ is uniquely determined
 by the tabulated values of $\lambda\cdot \gamma_i$ in Proposition \ref{vis-int}. 
 The result follows from Corollary \ref{identifygamma}.
 \end{proof}
 
 \begin{definition}\label{monodivisor} Let $X(\lambda)\to (C,0)$ be a divisor model
   of a degeneration of elliptic K3 surfaces whose monodromy invariant
 is $\lambda\in \sigma_{LR}$. That is, $X(\lambda)=X_{LR}(\ell)$ for an appropriate choice of $\ell$ by
 Proposition \ref{limit}.
From Corollary \ref{final-monodromy} such a model exists
whenever \begin{align*} b_L(\lambda)& \equiv b_R(\lambda)\equiv 0\pmod 6, \\
c_L(\lambda)&\equiv c_R(\lambda)\equiv 0\pmod 2.\end{align*}
Let $X_0(\lambda)$ be the central fiber
and $B(\lambda):=\Gamma(X_0(\lambda))$
be the dual complex.
 \end{definition}
  
  \begin{remark} The divisor model $X(\lambda)$ is not combinatorially unique---various
  choices were made in its construction, such as how to triangulate $B(\lambda)$. But
  these choices play no role, since the function of $X(\lambda)$ in the paper is to
  apply Theorem \ref{threeinputs}. It verifies
  input {\bf (div)} and serves an example on which input {\bf (d-ss)} can be checked.
  \end{remark}

\subsection{Type II models}
\label{sec:type-ii}

We now describe Type II divisor models. These correspond to when
the $\ias$ on the dual complex degenerates to a segment. It can do
so in two ways.

If $\{L,R\}\in \{2,3\}$ and
$\ell_2=\cdots = \ell_{18}=0$, the sphere degenerates to a vertical segment.
Define a Type II Kulikov model, of combinatorial type $\wY_4\wY_{20}$, 
associated to the Type II ray $\wD_{16}$ of $\cF_\rc$
as follows:

It is a vertical chain of surfaces. The bottom ($\wY_4$)
of the chain is $\mathbb{F}_2$.
It is glued to the next component up along a genus $1$ curve in the
anticanonical class $2(s+2f)$ with $s$ the $(-2)$-section.
Next, a sequence of elliptic ruled surfaces glued successively along
elliptic sections of the ruling, of self-intersections $-8$ and $8$. 
At the top of the chain ($\wY_{20}$)
is the blow-up $Bl_{16}\mathbb{F}_2$ at $16$ points on a
genus $1$ curve in the class $2(s+2f)$, glued along the strict transform of the
curve.

We now give the structure of a divisor model.
On the top of the chain, the divisor $R$
is the sum of the $16$ reducible fibers of the ruling and
four doubled fibers tangential to the double curve. On the bottom
it is four doubled fibers tangential to the double curve, plus $16$ fibers
of the ruling, plus the $(-2)$-section. On the intermediate surfaces,
it is the sum of $16$ fibers and $4$ double fibers.
The union of fibers of the rulings on all components 
form the very singular fibers. 

If $\{L,R\}\in\{1,2,3\}$ and $\ell_1=\cdots = \ell_9=\ell_{11}=\cdots \ell_{19}=0$,
the sphere degenerates to a
horizontal segment. Define a Type II Kulikov model, of combinatorial type
$\wX_{12}\wX_{12}$, associated to the Type II ray $\wE_8\wE_8$ of $\cF_\rc$
as follows:

The left end ($\wX_{12}$) is a rational elliptic surface. It is glued along a smooth elliptic fiber
to a chain of surfaces isomorphic to $E\times \mathbb{P}^1$ until the right end ($\wX_{12}$)
is reached, which also rational elliptic. The divisor model is defined as follows:
The section is an exceptional curve at each end, and a section $\{e\}\times \mathbb{P}^1$ on the
intermediate components. The very singular fibers are the singular fibers of the 
elliptic fibrations of the left and right ends.

\subsection{Stable models and their irreducible components}
\label{sec:irr-comps}

It remains to describe the stable model resulting from the divisor model $X(\lambda)$.
We describe here the components which will appear in the stable model,
and prove that in Type III their moduli spaces are affine. 

\begin{definition}\label{stabletype} 
  The {\it stable type} of a cone in $\cF_\rc$ is gotten by the
  following transformations on ADE type:
  Bold the symbols $A_n$, $E_n$, $E'_1$, replace $D_n$ by ${\bf C_n}$ and
  $D_0,D'_0$ by ${\bf C_0}$. Thus the stable type only fails to distinguish
  between $D_0$ and $D'_0$; both of them have the stable type ${\bf C_0}$. 
   The conversions between the three notations of the paper
  are summarized in Table \ref{tab:notation-conversions}.
  \end{definition}

\begin{table}

  \begin{tabular}[htp]{ c | c |  c  }
Cones of $\cF_\rc$ & Singularities of $\ias$ & Stable types   \\
    \hline
$E_k\,(0\leq k\leq 8)$ and $E_1'$ & $X_{k+3}$ and $X_4'$ & ${\bf E_k}$ and ${\bf E_1'}$  \\
$D_k\,(0\leq k\leq 17)$ and $D_0'$  & $Y_2Y_{2+k}$ and $Y_2Y_2'$ & ${\bf C_k}$ and ${\bf C_0}$ \\
$A_k\,(0\leq k\leq 17)$  & $I_{k+1}$ & ${\bf A_k}$ \\
$\wE_8\wE_8$ and $\wD_{16}$  & $\wX_{12}\wX_{12}$ and $\wY_{4}\wY_{20}$ & ${\bf \wE_8\wE_8}$ and ${\bf \wC_{16}}$
    \end{tabular}
    \bigskip
    \caption{Conversions between the three notations}
    \label{tab:notation-conversions}
\end{table} 



\begin{definition}\label{def:stab}
For each possible symbol in the stable type, we define an
irreducible stable pair $(X, \Delta + \epsilon R)$ as follows:

\smallskip
${\bf E_n} \,(n\geq 0)$, ${\bf E_1'}$:
$X$ is the contraction of an elliptically fibered rational surface with 
an $I_{9-n}$ fiber along all components of fibers not meeting a section $s$.
In particular an $A_{8-n}$ is contracted in the $I_{9-n}$ fiber to give the nodal
curve $\Delta$. The divisor $R$ is $s$ plus the images of the
singular fibers not equal to $\Delta$. There is an induced lattice embedding
$A_{8-n} \subset E_8$. For $k=1$, the inclusion $A_7\subset E_8$ can be
either primitive (for the surface ${\bf E_1}$) or imprimitive (for the
surface ${\bf E_1'}$).

\smallskip
${\bf A_n}\,(n\ge 0)$: Let $(X^\nu,\Delta^\nu)$ be the toric anticanonical pair
$(\mathbb{F}_0,s_1+f_1+s_2+f_2)$. Then $X$ is the result of
gluing along the two sections $s_1$ and $s_2$ via the fibration $|f_1|$. The
boundary $\Delta$ is the sum of two glued fibers $f_1$ and $f_2$ and
$R$ is another section $s$ plus $n+1$ other nodal fibers. 

Restricting to either $s_1$ or $s_2$ gives a weighted stable curve
$(\bP^1, q_1+q_2 +\epsilon \sum p_i)$ with two boundary points
$q_1,q_2$ and $n+1$ other points $p_i$. Stable degenerations of ${\bf A_n}$
surfaces are in a bijection with degenerations of such curve
pairs and are describe by the well known Losev-Manin space $\oL_{n+1}$
\cite{losev2000new-moduli-spaces}. Thus, the moduli of ${\bf A_n}$ surfaces
are reduced to the moduli of curves. 

\smallskip
${\bf C_n}\,(n\ge0)$: Let $X^\nu=(\mathbb{F}_1, \Delta_1+\Delta_2)$ be an
anticanonical pair with $\Delta_1^2=0$ and $\Delta_2^2=4$. Then $X$ is the result of
gluing $X^\nu$ along the bisection $\Delta_2$ by the involution
switching the intersection points with the fibration of class $\Delta_1$. Here $\Delta$ is
$\Delta_1$ plus the gluing of $\Delta_2$ and $R$ is the $(-1)$-section $s$, plus
the sum of $k$ nodal glued fibers not equal to $\Delta$, plus twice the fibers
tangent to $\Delta_2$. These fibers become cuspidal upon gluing.

Restricting to $\Delta_2$ gives a weighted stable
curve $(\bP^1, q^+ + q^- + \epsilon \sum_{i=1}^k (p_i^+ + p_i^-))$
together with an involution $\iota$ exchanging $q^\pm$ and
$p_i^\pm$. The stable degenerations of such curve pairs are the
$C_n$ curves of Batyrev-Blume \cite{batyrev2011on-generalizations}, and
they are in a bijection with stable degenerations of our surface
pairs. Thus, as in the ${\bf A_n}$ case, the moduli of ${\bf C_n}$ surfaces are
reduced to the moduli of curves.
One should compare this to Lemma~\ref{lem:C-fan}.

\smallskip
${\bf \widetilde{C}_{16}}$: The Hirzebruch surface $\mathbb{F}_2$ glued to itself
along a smooth genus $1$ bisection of the ruling, in class $2(s+2f)$. The divisor
is the section, plus double the fibers tangent to the bisection
which get glued to cuspidal curves, plus $16$ nodal fibers. There is
no boundary.

\smallskip
${\bf \widetilde{E}_8}$: A rational elliptic surface contracted along
 all components of fibers not meeting a section $s$. The boundary $\Delta$ is an
$I_0$ fiber, i.e. a smooth elliptic curve and the divisor $R$ is
$s$ plus the sum of the singular fibers. 

\smallskip
Given a stable type ${\bf S_1}\cdots {\bf S_N}$
we define a stable surface as follows: For each symbol
${\bf S_i}$ take the corresponding irreducible stable pair listed
above, and glue the ${\bf S_i}$ together along $\Delta$ such that
the sections $s$ glue.
\end{definition}

\begin{remark}
  The maximal number of irreducible components of a stable pair
  is $20$, achieved for the $\cF_\rc$ cone $E_0A_0^{18}E_0$ or
  $\ias$ combinatorial type
  $X_3I_1^{18}X_3$, whose stable type is
  ${\bf E_0A_0^{18}E_0}$. 
\end{remark}

\begin{warning}
  All of the above stable pairs are Weierstrass fibrations, normal or
  non-normal. Thus, they have an elliptic involution $\iota$, and their moduli
  can be analyzed from the perspective of their $\iota$ quotients, in
  a manner similar to \cite{alexeev17ade-surfaces}.
  But the ${\bf ACE}$ surfaces defined above for the rc
  polarizing divisor are \emph{different} from the $ADE$ surfaces of
  \cite{alexeev17ade-surfaces}; the latter are adapted to the ramification
  polarizing divisor.
\end{warning} 

Recall the definitions of the root lattices $C_n$ ($n\ge1$) and $E_n$
($n=6,7,8$).  The $C_n$ lattice is the same as the $D_n$ lattice: the
sublattice of $\bZ^n(-1)$ of vectors with even sum of coordinates. The
Weyl group $W(C_n)$ is the group $\bZ_2^n\rtimes S_n$ of signed permutations,
and $W(D_n)$ is the index
2 subgroup $\bZ_2^{n-1}\rtimes S_n$ of signed permutations
with an even number of sign changes.

$E_n$ is the lattice $K_V^\perp\subset\Pic V$
for a smooth del Pezzo surface $V$ of Picard rank $\rho=n+1$.  Their
Weyl groups are defined to be generated by reflections in the
$(-2)$-vectors.
For some small $n$ these definitions give root lattices
$E_5=D_5$, $E_4=A_4$, $E_3=A_2A_1$. For lower
$n$ the definitions still make sense but may produce non-root lattices. In addition,
for $\rho=2$ there are two del Pezzo surfaces surfaces $\bF_1$ and
$\bF_0$, giving $E_1$ and $\iE_1$ respectively. We list the lattices
and their Weyl groups for these special cases in
Table~\ref{tab:small-DE}.

\begin{table}[htp!]
  \caption{Lattices $E_1,E'_1,E_2$ and their Weyl groups}
  \label{tab:small-DE}
  \centering
  \begin{tabular}{cccccccc}
    Symbol &Lattice &Group &&&Symbol &Lattice &Group\\
           $E_1$ &$\la -8\ra$ &$1$
               &&&$E_2$ &$\begin{pmatrix}-2&1\\1&-4\end{pmatrix}$ &$W(A_1)$\\
           $\iE_1$ &$\la -2\ra$ &$W(A_1)$\\

  \end{tabular}
\end{table}

\begin{definition}
  For a Dynkin type $A_n,C_n,E_n,\iE_1$, we denote by $\Lambda$ the
  corresponding root lattice, $T = \Hom(\Lambda, \bC^*)$ the torus with
  this character group, and by $W$ the Weyl group.
\end{definition}

\begin{theorem}\label{thm:moduli-slc-pair}
  The coarse moduli of stable pairs of type ${\bf A_n}$, ${\bf C_n}$, 
  ${\bf E_n}$, ${\bf E_1'}$ is $T/W$.
  The moduli space
  of stable pairs of a fixed stable type
  ${\bf S_1}\cdots {\bf S_N}$ is the product of the moduli
  spaces for the pairs of types ${\bf S_i}$, divided by the LR involution
  if the type is left-right symmetric.
\end{theorem}

\begin{proof} The easiest cases are ${\bf A_n}$ and ${\bf C_n}$ since they are
  reduced to the curve case.
  For ${\bf A_n}$ the
  moduli of such choices is simply a choice of $n+1$ fibers not equal
  to either component of $\Delta$, up to the $\C^*$-action on the
  base. This gives $\bC^*\backslash (\bC^*)^{n+1} / S_{n+1}= \Hom(A_n,\bC^*) /
  W(A_n)$. 

  For ${\bf C_n}$ surfaces the moduli is given by choosing
  $n$ fibers $y_1,\dotsc, y_n\in\bC$ not equal to the irreducible
  fiber $\Delta$ at $\infty$, modulo $S_n$ and the involution
  $(y_i)\to (-y_i)$. Using the maps $y_i = x_i + \frac1{x_i}$, this is
  the same as choosing
  $(x_1, \dotsc, x_n) \in \Hom(\bZ^n,\bC^*) / (\pm) = \Hom(C_n,
  \bC^*)$ modulo $S_n\ltimes\bZ_2^n = W(C_n)$.

  The minimal resolution of an ${\bf E_n}$ or ${\bf E_1'}$ surface is a
  rational elliptic surface $Y$ with a section $s$ and anticanonical
  $I_{9-n}$ fiber $D= D_1+\cdots + D_{9-n}$.  One has
  \begin{displaymath}
  E_n=\{D_1,\dots,D_{9-n}\}^\perp/f = \{s,D_1,\dots,D_{9-n}\}^\perp.
  \end{displaymath}
  Contracting $s$ then
  successively contracting all but one component of $D$, we see that
  $E_n\cong (K_V)^\perp$ on a del Pezzo surface $V$, so this is the
  same definition of $E_n$ as above.  
  The \emph{period torus} for the anticanonical pairs preserving the
  elliptic fibration is $\Hom(E_n, \bC^*)$. 
  Deformations of such pairs always preserve the $(-1)$-section
  $s$. 

  The {\it period point} $\varphi_Y\in \Hom(E_n,\,\C^*)$ is given by
  the restriction map on line bundles $E_n \to \Pic^0(D)= \C^*$.  In
  the current setting, the Torelli theorem for anticanonical pairs
  \cite[Thm.1.8]{gross2015moduli-of-surfaces},
  \cite[Thm.8.7]{friedman2015on-the-geometry} implies that two such
  surfaces $Y$ with marked section $s$ and fiber $D$ are isomorphic if
  and only if there is an element of the finite reflection group
  $W(E_n)$ relating their period points $\varphi_Y$. Thus the moduli
  space is $\Hom(E_n ,\C^*)/ W(E_n)$.

  For a stable surfaces of type ${\bf S_1\cdots S_N}$, the
  gluings of the components are unique up to an isomorphism, since the
  components form a chain. So the moduli space is the product of the
  moduli spaces for the irreducible components, modulo the LR
  involution if the type is symmetric.
\end{proof}
 
 \begin{corollary}\label{cor:affine}
   A type III stratum in $\slcrc$ of a fixed stable type 
   is affine.
 \end{corollary}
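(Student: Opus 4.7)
The plan is to reduce the statement directly to Theorem \ref{thm:moduli-slc-pair}. First I would observe that a Type III stratum of $\slcrc$ corresponds to a stable type $\mathbb{S}_1\cdots\mathbb{S}_n$ built only from the non-tilde symbols $\mathbb{A}_k,\mathbb{D}_k,\mathbb{E}_k,\ibE_1$, since the symbols $\widetilde{\mathbb{D}}_{16}$ and $\widetilde{\mathbb{E}}_8$ describe the Type II components (cf.\ Section \ref{sec:type-ii} and Definition \ref{def:stab}).

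Next, by Theorem \ref{thm:moduli-slc-pair} such a stratum is isomorphic to a finite product $\prod_i T_i/G_i$, possibly further quotiented by the order-two LR involution when the type is left-right symmetric. Here each $T_i = \Hom(\Lambda_i,\bC^*)$ is an algebraic torus, and $G_i$ is one of the finite groups $W(A_k)$, $W(E_k)$, $W(\ibE_1)$, or $O(D_k) = \bZ_2\ltimes W(D_k)$, acting algebraically. In particular each $T_i$ is an affine variety and each $G_i$ is a finite group.

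To conclude, I would invoke the standard fact that if a finite group $G$ acts algebraically on an affine variety $X = \operatorname{Spec} A$, then the invariant subring $A^G$ is finitely generated (Hilbert--Noether) and the categorical quotient $X/G = \operatorname{Spec} A^G$ is affine. Applying this to each factor $T_i/G_i$, taking the finite product (which preserves affineness), and then taking one additional finite quotient by the LR involution if the type is symmetric, all yield affine varieties. There is no genuine obstacle here: the corollary is an immediate consequence of the explicit description of the strata in Theorem \ref{thm:moduli-slc-pair}, combined with this standard invariant-theoretic fact. The only observation requiring a moment of care is the first one, namely that the tilde-components are exactly those that arise in the Type II locus and thus do not appear in any Type III stratum.
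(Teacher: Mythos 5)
Your proposal is correct and matches the paper's intent exactly: the paper states the corollary as an immediate consequence of Theorem~\ref{thm:moduli-slc-pair} with no further argument, the implicit reasoning being precisely your chain (torus is affine, finite quotients of affines are affine by Hilbert--Noether, products of affines are affine, one more finite quotient by the LR involution stays affine). Your preliminary observation that the tilde symbols $\widetilde{\mathbb{D}}_{16}$, $\widetilde{\mathbb{E}}_8$ occur only in Type II is also the right point of care, and nothing further is needed.
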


 \begin{remark}\label{DC-remark}
   For an $\ias$ with a $Y_2Y_{2+n}$ or $Y_2Y_{2+n}'$
   end as in Table~\ref{tab:sings-on-ias} and Notation~\ref{d-notation}, 
  there is an irreducible component~$V$ of a divisor model defining
   singularity $Y_{n+2}$ or $Y'_{2+n}$ of the integral-affine
   structure. For $Y_{n+2}$ (resp. $Y'_{2+n}$), one begins with an anticanonical pair
   $(\bF_1,\Delta_1+\Delta_2)$ (resp. $(\bF_0,\Delta_1+\Delta_2)$), 
   $\Delta_1^2=0$, $\Delta_2^2=4$ and blows up $n$ points on $\Delta_2$
   plus some corner blowups.  For $n=0$ these two deformation types 
   are distinct; but once $n>0$ they coincide (which is why we do not require
   the notation $Y_{2+n}'$ for $n>0$).

   For $n>0$,
   the orthogonal complement $\{\Delta_1, \Delta_2\}^\perp \subset \Pic V$ to the
   boundary is the $D_n$-lattice, and the group of admissible
   monodromies is $W(D_n)$, so the moduli space of
   anticanonical pairs is
   $T(D_n)/W(D_n)$. Exchanging a \emph{pair} of points $p_1,p_2\in
   \Delta_2$ which are blown up to their involution partners $\iota
   p_1,\iota p_2$ gives an anticanonical pair with the same period
   modulo $W(D_n)$
   but changing only one point $p$ to $\iota p$ changes the
   period and the isomorphism type of the surface.

   However, on the stable model this information is lost: exchanging
   any point $p$ to $\iota p$ gives the same fiber. So the moduli
   space of stable pairs is $T(C_n)/W(C_n)$, the quotient of
   $T(D_n)/W(D_n)$ by an involution. For $n=0$ there are two types of
   anticanonical pairs but only one type of stable pairs, so the map
   to the stable moduli is again $2:1$. 
 \end{remark}

 \begin{remark}
   Since the stable degenerations of the ${\bf A_n}$ and ${\bf C_n}$
   surfaces are compatible with the degenerations of the ${\bf A_n}$
   and ${\bf C_n}$ curves, their the moduli spaces come with
   compactifications of the form $\overline{T}/W$, where
   $\overline{T}$ is a toric variety for the Coxeter fan of type $A$,
   resp. $C$.  These are moduli spaces of Losev-Manin curves
   \cite{losev2000new-moduli-spaces} and Batyrev-Blume curves
   \cite{batyrev2011on-generalizations}.

   For the moduli of ${\bf E_n}$ surfaces, taking the star of the
   corresponding cone in $\cF_\rc$ and the closure of $T/W$ in
   $\oF^\rc$ provides a stable slc pair compactification $\oT/W$,
   where $\oT$ is a toric variety for the fan obtained from the $E_n$
   Coxeter fan by subdiving a Weyl chamber by the hyperplanes
   $\beta^\perp$, $\gamma^\perp$ as in Fig.~\ref{fig:cones}. This is a
   very interesting fan indeed which we investigate further 
   in a forthcoming paper.
 \end{remark}

 \begin{notation}\label{not:R-quotient}
   We now study the moduli stack of pairs of type ${\bf S}$. To do so,
 we introduce the following notations: Let $G$ be a discrete group acting properly
 discontinuously on an analytic space $X$. We notate the coarse space of the quotient by $X/G$
 and we notate the stack (orbifold) quotient by $[X:G]$.
 
 For our purposes, we require a more refined notion. Suppose we are
 given a reflection subgroup $W\subset G$ corresponding to a root
 system $R$ and for every root $\alpha\in R$ a divisor
 $\Delta(\alpha)\subset X$ contained in the fixed locus of the
 reflection $s_\alpha$. For any $x\in X$ consider the set of roots
 $R_x = \{\alpha \mid x\in\Delta(\alpha) \}$ and the subgroup
 $W(R_x)\subset G_x$ in the stabilizer of $x$ generated by the
 reflections $s_\alpha$ with $\alpha\in R_x$. Assume that $W(R_x)$ is
 a normal subgroup of $G_x$ for all $x$.  Under these conditions, we
 define $[X:_R G]$ as follows.

 For each point $x\in X$, there is an open $G_x$-invariant
 neighborhood $U_x\ni x$ where the $G_x$-action is approximated by the
 linear action of $G_x$ on the tangent space $\bT_x$, and 
 which satisfies following condition: for any
 $y\in U_x$ one has $G_y\subset G_x$, $R_y\subset R_x$, and
 $W(R_x)\cap G_y = W(R_y)$. This neighborhood is obtained by
 applying Luna's slice theorem and by 
 successfully removing the closed subsets where the above conditions
 fail. Now define
 \begin{displaymath}
   [U_x:_RG] =[U_x/W(R_x):G_x/W(R_x)].   
 \end{displaymath}
In words: we take the coarse quotient by $W(R_x)$ first, and then the
stack quotient by the remaining group $G_x/W(R_x)$.

At this point, we recall the theorem of Chevalley, Shephard and Todd
\cite{chevalley1955invariants-of-finite, shephard1954finite-unitary}:
if $G$ is a finite group acting linearly on a complex vector space
then $V/G$ is smooth iff $G$ is generated by pseudoreflections, i.e. linear
transformations fixing a hyperplane pointwise. For a Weyl group $W$
pseudoreflections are reflections $s_\alpha$. In particular, 
if $U_x$ is smooth then so is $U_x/W(R_x)$.
 
 For any $y\in U_x$ we can choose an open neighborhood $U_y\ni y$ in
 the same way as above.
 Since $W(R_x)\cap G_y = W(R_y)$, the
 map $U_y/W(R_y) \to U_x/W(R_x)$ is unramified. Thus, the map
 $[U_y:_RG] \to [U_x:_RG]$ is an open embedding, and the stacks
 $[U_x:_RG]$ patch together to define the stack $[X:_RG]$.

 The coarse moduli space of $[X:_R G]$ is $X/G$, same as for $[X:G]$, but the stack
 structure is different: the local reflection groups $W(R_x)$ are not
 part of the inertia.
 
 The reason for introducing this notation is that it concisely describes the types of moduli stacks
 which occur in the presence of a Torelli type theorem, for a more detailed discussion, see
 \cite[Rem.~2.36, Thm.~8.12]{alexeev2021compact}. For instance, the moduli stack of lattice-polarized ADE
 K3 surfaces is $[{D}:_RG]$ where ${D}$ is the period domain, $G$ is the appropriate
 arithmetic subgroup of $O(2,19)$, and the root system $R$ consists 
 of the vectors $\alpha$ with $\alpha^2=-2$.
 \end{notation}

 We also prove the following Lemma.  Using the notations of
 \cite{bourbaki2002lie-groups}, let $R$ be a root system with a root
 lattice $Q$, weight lattice $P$ and Weyl group $W$.
 Denote by $\mu_R$
 the finite abelian group $\Hom(P/Q, \bC^*)$. Then one has the
 following commutative diagram

 \begin{equation}
   \label{eq:PQ}
 \begin{tikzcd}
   \Hom(P,\bC^*) \ar[r] \ar[d, "/\mu_R"] & 
   \Hom(P,\bC^*)/W \ar[d, "/\mu_R"] \\
   \Hom(Q,\bC^*) \ar[r] &
   \Hom(Q,\bC^*)/W 
 \end{tikzcd}
 \end{equation}

 It is a basic result of the invariant theory of multiplicative type that
 one has $\Hom(P,\bC^*)/W = \bA^n$, with the coordinates on $\bA^n$ equal to
 the characters of the fundamental weights, see e.g. \cite[Ch.8, \S7,
 Thm.2]{bourbaki2005lie-groups}.
 
 For each root $\alpha\in Q$, let $\Delta_Q(\alpha)$ be the kernel of
 the homomorphism $\Hom(Q,\bC^*)\to\Hom(\bZ\alpha,\bC^*)=\bC^*$. Let
 $\Delta_P(\alpha)$ be its preimage in $\Hom(P,\bC^*)$. We use these
 divisors $\Delta_Q(\alpha)$, $\Delta_P(\alpha)$ to define the
 $:_R$ quotients as in Notation~\ref{not:R-quotient}. 
 
 \begin{lemma}\label{lem:R-quotient}
   One has $[\Hom(Q,\bC^*):_R W] = [\Hom(P,\bC^*)/W : \mu_R] = [\bA^n:\mu_R]$.
 \end{lemma}
 \begin{proof}
   The ramification divisor of $\Hom(P,\bC^*) \to \bA^n$ is
   $\cup_{\alpha\in R}\Delta_P(\alpha)$, see
   e.g. \cite[6.4, 6.8]{steinberg1965regular-elements}.
   An easy direct computation shows
   that the fixed locus of the reflection $s_\alpha$ on the weight lattice
   torus $\Hom(P,\bC^*)$ is
   $\Delta_P(\alpha)$. 

   For $x\in \Hom(P,\bC^*)$, let $\bT_x$ be the tangent space at $x$.
   Since the quotient $\Hom(P,\bC^*)/W=\bA^n$ is smooth, the quotient
   $\bT_x/W_x$ is smooth as well (this follows from a baby version of
   Luna's slice theorem).  By the theorem of Chevalley, Shephard and
   Todd,
   $\bT_x/W_x$ is smooth if and only if $W_x$ is generated by
   pseudo-reflections.
   Alternatively, we can cite \cite{bourbaki2002lie-groups}, Exercise
   7 to Ch.V \S5.
   Pseudoreflections in $W$ are reflections. So $W_x$ is
   generated by the reflections that it contains.  (We thank Michel
   Brion for this argument.)

   Now it follows that $[\Hom(P, \bC^*) :_R W] = \Hom(P, \bC^*)/ W =
   \bA^n$ and that 
   \begin{displaymath}
     [\Hom(Q, \bC^*) :_R W ] = 
     [\Hom(P, \bC^*) :_R (W \times \mu_R) ] =
     [\bA^n : \mu_R]. 
   \end{displaymath}
 \end{proof}

 \begin{remark}
   For the action of $W$ on the root lattice torus $\Hom(Q,\bC^*)$ it is in general not true that
   the stabilizer $W_x$ coincides with $W(R_x)$ for all $x$. For
   example, for 
   $R=A_2$ and $W=S_3$ there are two points with stabilizer
   $W_x=\bZ_3$ and trivial $W(R_x)$.
   Also, an explicit computation shows that the fixed locus of a reflection
   $s_\alpha$ on $\Hom(Q,\bC^*)$ is $\Delta_Q(\alpha)$ if and only if $\alpha$ is
   primitive in the weight lattice $P$. This holds for all irreducible
   root $ADE$ lattices except for $A_1$, in which case
   $\Delta_Q(\alpha)$ is a single point $\{1\}$ while the fixed
   locus of the involution $z\to z\inv$ is $\{\pm1\}$.
 \end{remark}

 For a simply laced $ADE$ root system one has $Q=\Hom(P,\bZ)$. For the
 $C_n$ root system one has $P(C_n)=\bZ^n$ and $Q(C_n)$ the sublattice
 of vectors with even sums, so that $P/Q=\bZ_2$. To simplify notation,
 we frequently denote the root lattice by the same symbol as the root
 system, and write $A_n$ etc. instead of $Q(A_n)$ etc.
 
 \begin{theorem}\label{thm:irr-comp-stacks}
   The moduli stack of irreducible pairs of type ${\bf S}$ is a
   $\mu_2$-gerbe over:
   \smallskip

   \begin{tabular}{lll}
     ${\bf S}$ & Stack & Group action\\ \hline
     ${\bf A_n}$, $n\ge0$ & $[\bA^n : \mu_{n+1}]$
                   & acting as $\mu_R$: $(c_i)\to (\xi^ic_i)$, $\xi^{n+1}=1$\\
     ${\bf C_n}$, $n\ge 0$ & $[\bA^n : \mu_2]$ & $(c_i) \to (-c_i)$\\
     ${\bf E_n}$, $n\ge3$ & $[\bA^n : \mu_{9-n}]$ & acting as $\mu_R=\Hom(P/Q,\bG_m)$\\
     ${\bf E_2}$ & $\bG_m \times \bA^1$\\
     ${\bf E_1}$ & $\bG_m$ \\
     ${\bf E'_1}$ & $[\bA^1 : \mu_4]$ & for $\mu_4=\la g\ra$, $g(c) = -c$\\
     ${\bf E_0}$ & $[\pt : \mu_3]$
   \end{tabular}

   \smallskip
   Here, for the ${\bf A_n}$ pairs we fix the left-right orientation.
 \end{theorem}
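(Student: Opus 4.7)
The plan is to upgrade the coarse-moduli description of Theorem \ref{thm:moduli-slc-pair} to the level of stacks by carefully tracking automorphisms of each irreducible pair. The global $\mu_2$-gerbe structure will come from the elliptic involution.

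First, I would establish the $\mu_2$-gerbe. Every irreducible stable pair $(X, \Delta + \epsilon R)$ in Definition \ref{def:stab} is a (possibly non-normal) Weierstrass fibration whose divisor $R$ consists of the section $s$ together with fibers, and similarly $\Delta$ is built from fibers or their gluings. Consequently the elliptic involution $\iota$ lifts to an automorphism of each pair fixing $s$ pointwise and preserving $\Delta + R$, so $\iota$ sits in $\mathrm{Aut}(X, \Delta + \epsilon R)$ at every geometric point. By the Torelli theorem for anticanonical pairs invoked in Theorem \ref{thm:moduli-slc-pair}, $\iota$ is the sole generator of the generic stabilizer, so the moduli stack is a $\mu_2$-gerbe over its rigidification.

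Next, I would identify the rigidification type by type. For the root-lattice types $\bA_k$ and $\bE_k$ ($k \ge 3$), this uses Chevalley's theorem: with $T = \mathrm{Hom}(\Lambda, \bC^*)$, $T_{\mathrm{sc}} = \mathrm{Hom}(\Lambda^\ast, \bC^*)$, and $\mu_\Lambda = \mathrm{Hom}(\Lambda^\ast/\Lambda, \bG_m)$, the exact sequence $1 \to \mu_\Lambda \to T_{\mathrm{sc}} \to T \to 1$ gives $[T/W] = [(T_{\mathrm{sc}}/W)/\mu_\Lambda] = [\bA^{\rk\Lambda} : \mu_\Lambda]$, since $T_{\mathrm{sc}}/W \cong \bA^{\rk\Lambda}$ canonically. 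For $\bA_k$, the $c_i$ realize as elementary symmetric functions of the $k+1$ fiber positions $x_i \in \bG_m$ normalized so that $\prod x_i = 1$; the residual $\mu_{k+1}$-action arises from $x_i \mapsto \xi x_i$ and yields $c_i \mapsto \xi^i c_i$. For $\bE_k$ one uses the identification $E_k = K_V^\perp$ on a del Pezzo surface, with $\mu_{9-k}$ the center of the simply-connected group.

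For $\bD_k$ the coarse moduli is $T/O(D_k)$ rather than $T/W(D_k)$, the extra $\bZ_2$ coming from the outer automorphism of the $D_k$ Dynkin diagram (geometrically, exchanging the $\bF_0$ and $\bF_1$ resolutions, as noted in the remark preceding the theorem). Using the parameterization by $k$ unordered fiber positions $y_i \in \bC$ modulo the global sign $y_i \mapsto -y_i$, and passing to the appropriate symmetric-function coordinates on $\bA^k$, the residual $\mu_2$ acts by $c_i \mapsto -c_i$. The four low-rank cases $\bE_2, \bE_1, \bE'_1, \bE_0$ I would handle individually: $\bE_0$ is a rigid extremal rational elliptic surface with $I_9$ fiber whose Mordell--Weil group contributes an extra $\mu_3$ of $3$-torsion translations; $\bE_1$ has trivial Weyl group so the stack is $\bG_m$; $\bE_2$ has rank $2$ with $W = W(A_1)$ acting by inversion on one factor; and $\bE'_1$ has an extra $\mu_2$-automorphism coming from the imprimitivity of $A_7 \subset E_8$ (geometrically, translation by a $2$-torsion section) which combines with $\iota$ to yield the $\mu_4$-structure, with $\mu_4$ acting on $\bA^1$ only through the quotient $\mu_4 \twoheadrightarrow \mu_2$, $g(c) = -c$.

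The main obstacle will be the small-rank cases, particularly $\bE'_1$: one must carefully identify the extra $\mu_2$-automorphism and verify that $\mu_4$ (rather than $\mu_2 \times \mu_2$) is the correct non-split extension acting in the stated manner. A secondary challenge is matching the stated actions on $\bA^k$ to natural coordinates on the moduli arising from fiber positions, and ensuring that Chevalley's identification $T_{\mathrm{sc}}/W \cong \bA^k$ is compatible with these coordinates.
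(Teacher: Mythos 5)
Your overall architecture --- the $\mu_2$-gerbe generated by the elliptic involution, the Bourbaki/Chevalley identification $[\Hom(\Lambda,\bC^*):W]=[\Hom(\Lambda^*,\bC^*):(W\times\mu_\Lambda)]=[\bA^k:\mu_\Lambda]$ for $\bA_k$ and $\bE_k$ ($k\ge3$), and the symmetric-function coordinates for $\bD_k$ --- is exactly what the paper does. The divergence, and the problem, is in the low-rank cases $\bE_0,\bE_1,\bE'_1,\bE_2$, which is where the real content of the theorem lies, since the lattices of Table~\ref{tab:small-DE} are not root lattices and Chevalley's theorem no longer applies. The paper handles these by passing to the quotient $X/\iota$, writing the branch trisection on $\bP(1,1,2)$ in an explicit normal form $f(x,y)$, reading off the Kodaira fiber at infinity from $\deg\Delta(x)$, and identifying the residual group $G$ as the stabilizer of the normal form inside the torus $(\bC^*)^2$ acting on $(x,y)$ (Table~\ref{tab:E-normal-forms}). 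Your proposed replacements for this computation do not work as stated.

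Two concrete issues. First, the extra symmetries you invoke for $\bE_0$ and $\bE'_1$ --- translations by $3$- resp.\ $2$-torsion Mordell--Weil sections --- are not automorphisms of the pair $(X,\Delta+\epsilon R)$: a torsion translation carries the marked section $s$ to a different section, and $s$ is a component of $R$ (Definition~\ref{def:stab}), so $R$ is not preserved. The groups $\mu_3$ and $\mu_4$ in the table actually come from automorphisms of the base $\bP^1$ fixing the $I_n$ fiber at infinity and permuting the remaining singular fibers; this is precisely the stabilizer-of-the-normal-form computation, and there is no shortcut through the Mordell--Weil group. Second, for $\bE_2$ the stack is not $[T:W(A_1)]$: the reflection in the $(-2)$-vector of the $E_2$ lattice fixes a one-dimensional subtorus of $T=\Hom(E_2,\bC^*)$ (namely $\{\psi(\alpha)=1\}$), so $[T:W(A_1)]$ carries $\bZ_2$-stabilizers along a divisor, whereas the asserted answer $\bG_m\times\bA^1$ is a scheme. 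A similar caution applies to $\bE'_1$, where the coarse space is the right $\bA^1$ but the stack is $[\bA^1:\mu_4]$ with $\mu_4$ acting through its quotient $\mu_2$, a structure you cannot see from the lattice alone. The Torelli theorem gives you the coarse space $T/W$; upgrading to the stack requires determining which elements of the stabilizer of a period point are induced by honest automorphisms of the pair preserving $s$ and the marked fibers, and for these small lattices that must be checked by hand --- which is exactly what the paper's Weierstrass normal-form analysis accomplishes.
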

 \begin{proof}
   For ${\bf A_n}, {\bf C_n}$ and for ${\bf E_n}$ with $n\ge3$ the result is the direct
   application of Lemma~\ref{lem:R-quotient}. For smaller ${\bf E_n}$ we use
   the explicit normal forms of the surfaces.

   \smallskip   
   For ${\bf A_n}$, $\Hom(P,\bC^*)$ is the same as the choice of $n+1$
   points $p_i\in\bC^*$ with $\prod p_i=1$, with a choice of the
   origin, and $\Hom(P,\bC^*)/W=\bA^n$ is the set of coefficients
   $(c_i)$ in
   the equation
   \begin{math}
     \prod (x+p_i) = 1+c_1x + \dotsc + c_nx^n+x^{n+1}
   \end{math}
   which are well defined up to rescaling to $(c_i)\to (\xi^i c_i)$.

   \smallskip
   
   The data for the surface ${\bf C_n}$, $\Hom(P,\bC^*)=(\bC^*)^n$ is the
   data for the $n$ points $p_i^+$ on the bisection $B_2\setminus
   \{0,\infty\}$. These points are well defined up to switching
   $p_i^+\to p_i^-=\iota p_i^+$ and switching $p_i^+\to p_j^{\pm}$ for
   $i\ne j$. The quotient is $\bA^n$ with the coefficients $(c_i)$
   giving the equation $x^n+c_1x^{n-1} + \dotsb +c_n$ of the fibers on
   $\bP^1\setminus \{0,\infty\}/\iota = \bP^1\setminus\infty$. 

   Alternatively, $\Hom(Q,\bC^*)$ is the choice of $n$ points $p_i^+\in\bC^*$
   defined up to $p\mapsto -p$, and the moduli stack is
   $[\Hom(Q,\bC^*):_R W]$, giving the same result by
   Lemma~\ref{lem:R-quotient}. 

   \smallskip
   
   The normal forms for ${\bf E_6}$, ${\bf E_7}$, ${\bf E_8}$ were given in
   \cite{alexeev17ade-surfaces}. Here, we extend them to ${\bf E_n}$ with
   $0\le k\le 5$ and ${\bf E'_1}$. The quotient by the elliptic involution
   is $X/\mu_2=\bF_2$,
   the double cover is branched in the $(-2)$-section and a
   trisection.  After contracting the $(-2)$-section we get
   $\bP(1,1,2)$ and the equation of the trisection is a polynomial
   $f(x,y)$ of degree~6, where $\deg x=1$ and $\deg y=2$ so that
   $f(x,y)$ is a cubic in $y$. In affine
   coordinates $X$ has the equation $z^2+f(x,y)=0$. 

   For a Weierstrass surface $V\to \bP^1$, its minimal resolution
   $\wV$ has an $I_n$ Kodaira fiber with $n\ge1$ over $x_0\in\bP^1$
   iff the equation $f(x_0,y)$ has a double and a single roots in
   $y$, see e.g. \cite[IV.2.2]{miranda1989the-basic-theory}.  Putting
   the double root at $y=0$ and the single root at $y=\frac14$, we can
   assume that $f(x_0,y) = y^3 - y^2/4$. If the nodal fiber is at
   $x_0=\infty$, this means that the degree~6 part of $f(x,y)$ is
   $y^3-(xy)^2/4$.  By making substitutions $x\to x+a$ and
   $y\to y+bx +c$ and completing the square, $f(x,y)$ can
   be put in the following form, unique up to rescaling $x,y$ 
   (cf. \cite[Sec. 5]{alexeev17ade-surfaces}):
   \begin{displaymath}
     f =
     y^3 + c'_2 y^2 + c'_1 y 
     - \tfrac14(xy-c'')^2 +
     c_0 + c_1x + c_2x^2 +c_3x^3 + c_4x^4 + c_5x^5.
   \end{displaymath}
   This surface has an $I_n$ fiber iff its discriminant satisfies
   $\mult_{x_0}\Delta(x) = n$. For $x_0=\infty$ this means that
   $\deg\Delta(x) = 12-n$.  Putting $f(x,y)$ in the Weierstrass form
   and computing the normalized discriminant
   $\Delta_n(x)=-2^4(4A^3+27B^2)$, we find the following.
   One has
   $\deg\Delta_n(x)\le 11$ and the coefficient of $x^{11}$ in
   $\Delta_n$ is $c_5$. Thus, the surface $V$ is of type ${\bf E_8}$,
   (i.e. with $I_1$ fiber at $x_0=\infty$) iff $c_5\ne0$, in
   which case we can set $c_5=1$. If $c_5=0$ then
   $\coeff(x^{10},\Delta_n) = c_4$. Thus $V$ has type ${\bf E_7}$
   (i.e. with $I_2$ fiber at $x_0=\infty$) iff $c_5=0$ and $c_4\ne0$, and we can
   set $c_4=1$. This argument continues for ${\bf E_6},\dotsc, {\bf E_3}$.

   For ${\bf E_2}$, one must have $c_5=\dotsb = c_0=0$ and then
   $\coeff({x^5},\Delta_n)=c_1'c''$. We can normalize by setting
   $c''=1$ and take any $c_1'\ne0$.

   For $k=1$ one must have $c_1'c''=0$. Choosing $c'_1=0$ gives
   $\coeff(x^4,\Delta_n)=c_2'(c'')^2$. We normalize by setting $c''=1$
   and $c'_1\ne0$ and call this ${\bf E_1}$. Choosing $c''=0$ gives
   $\coeff(x^4,\Delta_n)=(c_1')^2$. Normalizing $c'_1=1$ gives ${\bf E'_1}$.

   Finally, for $k=0$ one must have
   $c_1'= c_2'=0$, and then we normalize $c''=1$. We call this case ${\bf E_0}$.
   When $c_1'=c''=0$, one has
   $\Delta_n(x)\equiv0$, so all fibers are singular. This is a nonnormal
   surface of type ${\bf C_0}$; one may call the fiber at infinity $I_\infty$.
   
   \begin{table}[ht]
     \caption{Normal forms of rational elliptic surfaces
       with $I_n$ fiber}
     \label{tab:E-normal-forms}
     \centering
     \begin{tabular}[h]{ll|cccccccccll}
       ${\bf S}$&$I_n$ &$c'_2$&$c'_1$&$c''$&$c_0$&$c_1$&$c_2$&$c_3$&$c_4$
       &$c_5$&$x^ny^m$&$G$\\ \hline
       ${\bf E_8}$&$I_1$& $\star$&$\star$&$\star$&$\star$&$\star$&$\star$&$\star$&$\star$&$1$&$x^5$&$\mu_1$\\
       ${\bf E_7}$&$I_2$& $\star$&$\star$&$\star$&$\star$&$\star$&$\star$&$\star$&$1$&&$x^4$&$\mu_2$\\
       ${\bf E_6}$&$I_3$ &$\star$&$\star$&$\star$&$\star$&$\star$&$\star$&$1$&&&$x^3$&$\mu_3$\\
       ${\bf E_5}$&$I_4$ &$\star$&$\star$&$\star$&$\star$&$\star$&$1$&&&&$x^2$&$\mu_4$\\
       ${\bf E_4}$&$I_5$ &$\star$&$\star$&$\star$&$\star$&$1$&&&&&$x$&$\mu_5$\\
       ${\bf E_3}$&$I_6$ &$\star$&$\star$&$\star$&$1$&&&&&&$1$&$\mu_6$\\
       \hline
       ${\bf E_2}$&$I_7$ &$\star$&$\ne0$&$1$&&&&&&&$xy$&$\mu_3$\\
       ${\bf E_1}$&$I_8$ &$\ne0$&&$1$&&&&&&&$xy$&$\mu_3$\\
       ${\bf E'_1}$&$I_8$ &$\star$&$1$&&&&&&&&$y$&$\mu_4$\\
       ${\bf E_0}$&$I_9$ &&&$1$&&&&&&&$xy$&$\mu_3$\\
       ${\bf C_0}$&$I_\infty$&$1$&&&&&&&&&$y^2$&$\mu_2$\\
     \end{tabular}
   \end{table}

   We summarize the results in Table~\ref{tab:E-normal-forms} and
   Fig.~\ref{fig:forms}.  The
   star means the coefficient is arbitrary and we don't write zeros.
   The normal forms of this table are unique up to the subgroup $G$ of
   $(\bC^*)^2$ acting on $x,y$ for which $y^3 + x^2y^2 + x^ny^m$ is
   semi-invariant. The monomial $x^ny^m$ and the group $G$ are given
   the last two columns. Taking the quotient of $\bA^n$, resp. of
   $\bG_m\times\bA^{k-1}$ by $G$ gives the stacks in the statement of
   the theorem. For ${\bf E_2}$ and ${\bf E_1}$, when a $\bG_m$ summand is
   present, the $\mu_3$-action is free.

   \begin{figure}[htp!]
     \centering
     \includegraphics{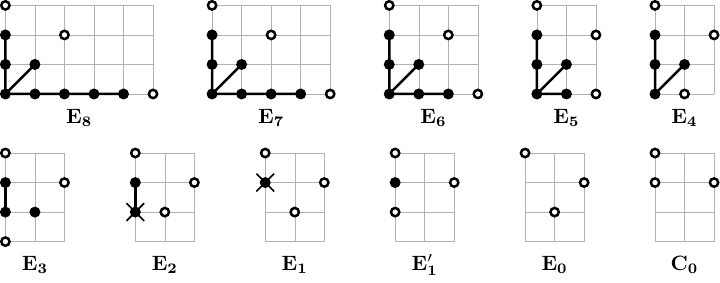}
     \caption{Normal forms of rational elliptic surfaces with $I_n$
       fiber}
     \label{fig:forms}
   \end{figure}
 \end{proof}

 For the Type II strata in $\slcrc$ we have the following:


\begin{theorem}\label{thm:moduli-slc-pair-type2}
  For the irreducible pairs the 
  moduli stack of $\bZ_2$-quotients of the stable pairs by the
  elliptic involution are
  \begin{displaymath}
    {\bf \wE_8}: \  [\Hom(E_8, \cE) :_R W(E_8)], 
    \qquad
   {\bf \wC_{16}}: \ [\Hom(C_{16},\cE) :_R W(C_{16})],
  \end{displaymath}
  where $\cE$ is the universal family of elliptic curves over their moduli
  $j$-stack. For the stable pairs of these types the moduli stack is a
  $\bZ_2$-gerbe over these.

  For the surfaces of type ${\bf \wE_8\wE_8}$ the moduli stack 
  is $[\Hom(E_8^2, \cE) :_R W(E_8^2)\rtimes\bZ_2]$.
\end{theorem}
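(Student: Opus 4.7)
The plan is to parallel the Torelli-based proof of Theorem~\ref{thm:moduli-slc-pair}, with the target of the period map now being the elliptic curve $E=\Delta$ that forms the Type II boundary, rather than $\bC^*$. The three stacks are handled separately and then assembled.

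\textbf{Step 1 ($\wt\bE_8$).} The normalization of a $\wt\bE_8$ pair is a rational elliptic surface $Y\to\bP^1$ with section $s$, smooth $I_0$ anticanonical fiber $D=\Delta$, and the stable model contracts everything in singular fibers outside $s$. Here $\{s,f\}^\perp\subset \Pic Y$ is isomorphic to $E_8$, and the Torelli theorem for anticanonical pairs with smooth elliptic boundary (\cite[Thm.1.8]{gross2015moduli-of-surfaces}, \cite[Thm.8.7]{friedman2015on-the-geometry}) identifies such $(Y,s,D)$, up to isomorphism, with a period point $\varphi\in\Hom(E_8,\Pic^0(D))=\Hom(E_8,D)$ modulo $W(E_8)$. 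Letting $D$ vary over the $j$-stack gives the stack $[\Hom(E_8,\cE):W(E_8)]$ as the moduli of the $\bZ_2$-quotient pair. The unquotiented stable pair carries the elliptic involution as a nontrivial $\mu_2$-automorphism of every object, producing the $\bZ_2$-gerbe.

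\textbf{Step 2 ($\wt\bD_{16}$).} Its normalization is $\bF_2$ with boundary two copies of a smooth bisection $D$ in the class $2(s+2f)$, glued via the hyperelliptic involution of $D$ interchanging the two intersection points on each ruling fiber. All remaining moduli lie in the choice of the $16$ nodal fibers, i.e.\ an unordered $16$-tuple of points $p_i\in D$, modulo permutations and the elliptic involution of $D$ inherited from the ambient involution. A computation identical to the $\bD_k$ case of Theorem~\ref{thm:moduli-slc-pair}---replacing $\bC^*$ with $D$ and using $y_i\mapsto x_i+x_i^{-1}$ on the level of the elliptic curve---identifies this with $\Hom(D_{16},D)/O(D_{16})$. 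Varying $D$ gives $[\Hom(D_{16},\cE):O(D_{16})]$, and the elliptic involution again provides the $\bZ_2$-gerbe structure.

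\textbf{Step 3 ($\wt\bE_8\wt\bE_8$).} The intermediate components in the chain are each $E\times\bP^1$; they have no moduli once $E$ is fixed and force both end elliptic curves to be identified with a single $E$. The gluings along smooth elliptic sections are unique up to the elliptic involutions already recorded in the two ends. Thus the moduli is the fiber product over the $j$-stack of two copies of the $\wt\bE_8$ stack from Step~1, further quotiented by the LR swap of the two ends. Since $\Hom(E_8\oplus E_8,\cE)=\Hom(E_8,\cE)\times_{j}\Hom(E_8,\cE)$ as stacks over the $j$-line, this yields $[\Hom(E_8^2,\cE):W(E_8^2)\rtimes\bZ_2]$.

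\textbf{Main obstacle.} The principal difficulty is the bookkeeping of automorphism groups and gerbe structure: one must check (i) that no residual moduli are hidden in the gluings of the intermediate $E\times\bP^1$ components, nor in the self-gluing of $\bF_2$ in $\wt\bD_{16}$, and (ii) that the group acting on the period torus is exactly $W(E_8)$, $O(D_{16})$, or $W(E_8^2)\rtimes\bZ_2$, with no extra involution either absorbed into or split off from the elliptic involution. Unlike Type III, the applicable Torelli theorem is for anticanonical pairs with \emph{smooth} elliptic boundary, where $\Pic^0(D)$ is an elliptic curve rather than $\bC^*$; the statement is well-known but its application here requires care, especially in matching the geometric moduli of the $16$-tuple of nodal fibers on $\wt\bD_{16}$ with the algebraic quotient $T/O(D_{16})$ for the elliptic torus $T=\Hom(D_{16},\cE)$.
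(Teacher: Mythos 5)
Your proposal is correct and follows essentially the same route as the paper: Torelli for anticanonical pairs with smooth elliptic boundary for $\wt\bE_8$, direct parametrization of the $16$ marked fibers for $\wt\bD_{16}$, and the product-plus-LR-swap for $\wt\bE_8\wt\bE_8$. The one point you flag as "requiring care" but do not actually carry out is exactly the step the paper makes precise: for $\wt\bD_{16}$ the identification $D\cong E$ compatible with the double cover $D\to\bP^1$ is only canonical up to translation by $E[2]$, so the $16$-tuple is a point of $E^{16}$ defined modulo the diagonal $E[2]$; the paper then uses the pair of exact sequences $0\to D_{16}\to\bZ^{16}\to\bZ_2\to 0$ and $0\to E[2]\to E^{16}\to\Hom(D_{16},E)\to 0$ to conclude that this quotient is precisely $\Hom(D_{16},E)$, on which $\bZ_2^{16}\rtimes S_{16}=O(D_{16})$ then acts. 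Supplying that observation would close the only real gap in your Step~2.
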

\begin{proof}
  By Torelli theorem for anticanonical pairs
  \cite{gross2015moduli-of-surfaces, friedman2015on-the-geometry}, for a
  fixed elliptic curve $E$, the moduli of ${\bf \wE_8}$ surfaces is the $:_R$-quotient of
  $\Hom(D^\perp / f, E)$ by the group of admissible monodromies, where $D\sim f$
  is the boundary, a smooth elliptic curve. We have an identification
  $D^\perp/f = \{D, s\}^\perp = E_8$, and the group is
  $W(E_8)$. A surface of type ${\bf \wE_8\wE_8}$ is glued from two such
  surfaces along the boundary $D\simeq E$, so we get the product
  above. The additional $\bZ_2$ is the left-right symmetry. Varying
  the elliptic curve, for the stack we get the same formulas with $E$
  replaced by the universal elliptic curve over the moduli stack of elliptic curves.

  A pair $(X,\epsilon R)$ of type ${\bf \wC_{16}}$ is $\bF_2$ with a
  smooth bisection $D\sim 2s+4f$, an elliptic curve $E$, plus 16
  fibers. The data of the 16 fibers gives a point $(x_1,\dotsc,
  x_{16}) \in E^{16}$ but defined only up to a 2-torsion (an element
  of $E[2]$), permuting the points, and dividing by the elliptic
  involution. One has the exact sequences
  \begin{displaymath}
    0 \to C_{16} \to \bZ^{16}\to \bZ_2\to 0 
    \qquad
    0 \to E[2] \to E^{16} \to \Hom(C_{16}, E) \to 0.
  \end{displaymath}
  Therefore a point $(x_i)$ mod $E[2]$ is an element of $\Hom(C_{16},
  E)$, and we take the $:_R$-quotient of this space by $\bZ_2^{16}\rtimes S_{16} =
  W(C_{16})$. Varying the elliptic curve $E$ gives the same
  formulas with $E$ replaced by the universal family $\cE$.
\end{proof}

\begin{remark}
  The commutative diagram~\eqref{eq:PQ}
  holds with
  $\bC^*$ replaced by an elliptic curve $E$. However, it is no longer
  true in general that $W_x = W(R_x)$, see e.g. the example in
  \cite[3.6]{looijenga1976root}. The Chevalley-Shephard-Todd's theorem
  implies that one has $W_x=W(R_x)$ for all $x$ iff the quotient
  $\Hom(P,E)/W$ is smooth.  By Looijenga
  \cite{looijenga1976root}, $\Hom(P,E)/W$ is a weighted projective
  space with the weights equal to the coefficients of highest root,
  and 1. It is frequently singular, e.g. for $R=D_n$ ($n\ge4$) and
  $E_n$ ($n=6,7,8$). Then the
  coarse quotient $\Hom(P,E)/W$ is singular but the stack
  $[\Hom(P,E):_R W]$ is smooth since it has an \'etale cover by the
  local quotients $U_x/W(R_x)$ as in \eqref{not:R-quotient}, which are
  smooth. Thus, in this case the weighted projective spaces may be
  considered as smooth orbifolds instead of as singular varieties.

  For $E_8$, $\Hom(P,E)/W = \bP(1,2^2,3^2,4^2,5,6)$ and for $C_n$ it
  is $\bP(1^2,2^{n-1})$.  Thus, the alternative description of the
  boundary divisors is as a family of stacky weighted projective
  spaces over the $j$-stack of elliptic curves.
 \end{remark}

\subsection{Proof of main theorem}
\label{sec:stable-models}

In this section we assemble the inputs necessary to apply Theorem \ref{threeinputs}.
First, we must show:

\begin{proposition}\label{takestable}
Let $X(\lambda)\rightarrow (C,0)$ be a divisor model with monodromy
invariant~$\lambda$. The stable model $\oX(\lambda)$
(cf. Definition \ref{stabletype})
has stable type gotten from
the combinatorial type (cf. Notation \ref{d-notation}) of the cone containing $\lambda$.

Furthermore, it is possible to vary $X(\lambda)\rightarrow (C,0)$ so that
any stable surface of the given combinatorial type
is realized as the stable model $\oX(\lambda)$.
  \end{proposition}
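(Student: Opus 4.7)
The strategy is to analyze the contraction $f\colon X(\lambda)\to\overline{X}(\lambda)$ given by $|nR|$ component-by-component, matching what is left with the irreducible pieces of Definition~\ref{def:stab}, and then to compare the two moduli descriptions on each stratum.

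First I would determine, for each irreducible component $V_i\subset X(\lambda)$ of a divisor model associated to the integral-affine structure $B_{LR}(\ell)$, whether $R|_{V_i}$ is big, and if not, what it contracts. Using Table~\ref{tab:divs-on-ias}: the non-singular components of $B_{LR}(\ell)$ that are not on the integral-affine divisor $R_{\ia}$ have $R|_{V_i}=0$ and are contracted outright to a curve in $\overline{X}(\lambda)$; the non-singular components on (only) the vertical line of $R_{\ia}$ have $R|_{V_i}=\sum f_i$, which is nef but contracts the section direction; and the components on the horizontal line of $R_{\ia}$ carry the nef-and-big divisor $s+\sum f_i$. Thus a ``horizontal strip'' of components carrying the $I_k$ singularities collapses under $f$ to an $\mathbb{A}_{k-1}$ surface (a non-normal pair obtained by gluing $(\mathbb{F}_0,s_1+f_1+s_2+f_2)$ along its two sections), exactly as in Definition~\ref{def:stab}. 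Similarly, at an end of type $X_{k+3}$ (resp.\ $\iX_4$) the single elliptic ruled surface with $I_{9-k}$ fiber contracts the $A_{8-k}$ chain of fibers not meeting the section, producing $\mathbb{E}_k$ (resp.\ $\ibE_1$). At a type-$2$ or $3$ end, the two components corresponding to $Y_{a+2}$ and $Y_{b+2}$ glue along their common double curve, and the contraction by $R$ collapses them to a single non-normal $\mathbb{D}_{a+b}$ pair (this is the source of the transformation $Y_{a+2}Y_{b+2}\rightsquigarrow \mathbb{D}_{a+b}$, reflecting Remark~\ref{gluedstrata}). Gluing these contracted pieces along their sections $s$ and boundary double curves $\Delta$ is forced by the chain structure of $X(\lambda)$ and matches the gluing recipe at the end of Definition~\ref{def:stab}. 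This establishes the first assertion of the proposition.

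For the realization part, I would combine the moduli descriptions on the two sides. By Proposition~\ref{dssinput} and Remark~\ref{surjperiods}, the $d$-semistable elliptically fibered divisor models of the combinatorial type of $\lambda$ are parametrized (up to topologically trivial deformation) by the period torus $\Hom(\Lambda,\mathbb{C}^*)$ of Definition~\ref{lambdalatt}, and the period homomorphism $\psi_i$ of any fixed component can be prescribed arbitrarily (with the single constraint $\psi_i(f)=1$ on $X$-type ends, which is automatically satisfied when $V_i$ is elliptically fibered). Under the contraction $f$, the period of a component $V_i$ descends to exactly the modulus of the corresponding irreducible stable pair described in Theorem~\ref{thm:moduli-slc-pair}: for $X_{k+3}$ (resp.\ $\iX_4$) ends the quotient $\widetilde{\Lambda}_i/\mathbb{Z} f=\{D_1,\dots,D_{9-k}\}^\perp/f$ is identified with $E_k$ (resp.\ the imprimitive sublattice giving $\iE_1$), and the residual Weyl-group ambiguity matches the one in Theorem~\ref{thm:moduli-slc-pair}; for $Y_{k+2}$ pairs the lattice $\widetilde{\Lambda}_i$ is $D_k$, and after the involution arising from the $Y$-$Y'$ exchange (cf.\ the remark following Corollary~\ref{cor:affine}) we recover $T/O(D_k)$; and the $I_k$ component is toric, contributing $\Hom(A_{k-1},\mathbb{C}^*)/W(A_{k-1})$ from the moduli of choices of $k$ fibers up to left-right translation. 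Since the components form a chain and the gluings are unique up to isomorphism (Theorem~\ref{thm:moduli-slc-pair}), the map from the divisor-model period torus to the stable-pair moduli is literally the projection onto the product of per-component factors (with the LR quotient when symmetric), hence surjective.

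The main obstacle is the bookkeeping in the second paragraph: one must carefully check that the period homomorphism $\psi_i\colon \widetilde{\Lambda}_i\to\C^*$ of a component in a divisor model equals the Gross--Hacking--Keel period of its image in the stable model under the contraction, component-by-component. The non-trivial points are (i)~the passage from $\widetilde{\Lambda}_i$ to $\Lambda_i$ at $X$-type ends, which is exactly the quotient $E_k=\{s,D_1,\dots,D_{9-k}\}^\perp$ used in the proof of Theorem~\ref{thm:moduli-slc-pair}; (ii)~the identification of the $Y_{k+2}Y'_{k+2}$ gluing with the $\mathbb{D}_k$ surface so that $O(D_k)$ (rather than $W(D_k)$) is the correct group; and (iii)~ensuring that the section $s$ glues across the whole chain, which is guaranteed by Lemma~\ref{dlemma} once $s$ is Cartier on the total space. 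Once these identifications are in hand, both surjectivity and the stable-type computation follow formally.
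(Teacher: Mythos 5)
Your proposal is correct and follows essentially the same route as the paper: the first part is the same curve-by-curve analysis of what $|nR|$ contracts (vertical rulings off the section, then horizontal rulings of unmarked components), and the second part is the same argument via Proposition~\ref{dssinput} and Remark~\ref{surjperiods} that the period torus $\Hom(\Lambda,\mathbb{C}^*)$ of the $d$-semistable deformations surjects onto the per-component moduli of Theorem~\ref{thm:moduli-slc-pair}. The only ingredient you leave implicit that the paper states explicitly is the appeal to Proposition~\ref{limit} to identify the very singular fibers with the limits of the singular fibers, which is what ties the period data to the stable-pair moduli.
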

  
  \begin{proof} The first statement
  follows from seeing which curves are contracted by
  the linear system of $L_n:= n(s+m\sum f_i)$ for $n\geq 4$
  on $X_0(\lambda)$. A curve $Z\subset X_0(\lambda)$ is contracted iff
  $L_n\cdot Z=0$. Thus the stable model $\oX_0(\lambda)$ is the result of:
  (1) contracting the vertical ruling on all components $V_i$ not containing the section,
  then (2) contracting the components $V_i$ containing the section but no marked fibers
  along the horizontal ruling. The resulting surface $\oX_0(\lambda)$ has the
  stable type ${\bf S_1\cdots S_N}$ associated to the cone containing $\lambda$.
  
 We now prove the second statement. First observe that the lattice $\Lambda$
  of Definition \ref{lambdalatt} is exactly given by the direct sum
 $$\Lambda = \oplus_i \,(A\textrm{ or }D\textrm{ or }E)_{n_i}$$ corresponding to the components along
 the top edge of $P_{LR}(\ell)$, i.e. the summands $\Lambda_i$
 of $\Lambda$ are in fact the character lattices associated to the corresponding
 symbol ${\bf S_i}$ of the stable type, except for switching ${\bf C}$ with $D$,
 see Remark \ref{DC-remark}.
    
   By Remark \ref{surjperiods}, there is an elliptically
 fibered $d$-semistable surface $X_0(\lambda)$
 with period map $\psi\,:\,\overline{\mathbb{L}}/ \Z f\oplus \Z s \to \C^*$ realizing
 any element $\psi\big{|}_{\Lambda}\in \Hom(\Lambda,\C^*)$ and hence any period
 point of the corresponding anticanonical pair $(V_i,\sum_j D_{ij})$,
subject to the condition that if $V_i$ is an $X$-type end, it is elliptically fibered.
 
 The element $\psi\big{|}_\Lambda$ determines uniquely the locations of the very
 singular fibers of $X_0(\lambda)$ in exactly the same manner that a point in the
 torus $\Hom(\Lambda_i,\C^*)$ determines the modulus of a stable surface:
 For the singularity $v_i= I_{n+1}$
 the relative location of two
 very singular fibers of $X_0(\lambda)$ containing the exceptional curves
 $E_1$ and $E_2$
 on the component $V_i$ is
$\psi(E_1-E_2)\in \C^*$ and hence $\psi\big{|}_{\Lambda_i}\in \Hom(A_n,\C^*)$
determines the relative locations of the very singular fibers intersecting $V_i$.
A similar computation
holds for type $Y_2Y_{2+k}$ and $Y_2Y_2'$ and $\Hom(D_n,\C^*)$.
By definition, the period point of an
elliptically fibered anticanonical pair of type 
$X_{k+3}$ lies in $\Hom(E_n,\C^*)$. It (inexplicitly) determines the location
of the singular fibers.

Finally, by Proposition \ref{limit}, the very singular fibers on $X_0(\lambda)$
are the limits of singular fibers
of the elliptic fibration on the general fiber. These curves contract to
the limits of the singular fibers on the stable model. So
 the restricted period point $\psi\big{|}_\Lambda\in \Hom(\Lambda,\C^*)$
 is compatible with the computation of stable moduli
made in Theorem \ref{thm:moduli-slc-pair}.\end{proof}

\begin{lemma}\label{diminput} 
  The dimensions of a stratum of $\torrc$ and the dimension of
  the corresponding moduli space of stable surfaces of fixed type
  are equal. For Type III strata, the former is equal to
  $20-\text{(length of its combinatorial type)}$, and the latter to
  the sum of its Dynkin indices.
\end{lemma}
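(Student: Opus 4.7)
The plan is to compute each of the two dimensions independently and verify their common value is $20 - n$ in the Type III case (where $n$ is the combinatorial length) or $17$ in the Type II case; equality of the two dimensions then follows.

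For the toroidal side, a stratum of $\torrc$ corresponding to a cone $\sigma \in \cF_\rc$ has dimension $18 - \dim\sigma$, since $F$ is $18$-dimensional. By Corollary \ref{final-monodromy}, the nineteen functions $\ell_1, \dots, \ell_{19}$ are, up to small rational rescalings, linear functionals on $\delta^\perp / \delta$. A Type III cone corresponding to a combinatorial type of length $n$ is cut out by setting a prescribed $19 - (n-1) = 20 - n$ of the $\ell_i$ to zero and requiring the rest to be non-negative. Together with the unique linear relation among the nineteen functions (displayed in \eqref{eq:lin-relations} and its right-end analogue), this produces a cone of dimension $n - 2$, hence a toroidal stratum of dimension $20 - n$. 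I would verify the independence of these vanishing conditions on each maximal subcone $\sigma_{LR}$ via Lemma \ref{lem:faces}.

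For the slc side, Proposition \ref{takestable} says the stable type $\bS_1 \cdots \bS_n$ is constant throughout the interior of $\sigma$, and Theorem \ref{thm:moduli-slc-pair} realizes the corresponding stratum of $\slcrc$ as a finite quotient of $\prod_i T_{\bS_i}$, whose dimension is $\sum_i \mathrm{rk}(\bS_i)$, the sum of Dynkin indices. To finish the Type III case I would verify the identity $\sum_i \mathrm{rk}(\bS_i) = 20 - n$: the total charge is $24$, and under the correspondences $I_{k+1} \mapsto \bA_k$, $X_{k+3}$ or $\iX_4 \mapsto \bE_k$ or $\ibE_1$, and $Y_{a+2} Y_{b+2} \mapsto \bD_{a+b}$, the charge losses are $1$, $3$, and $4$ respectively. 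A brief case analysis on $(L,R) \in \{1,2,3\}^2$ shows that in every situation the total loss is exactly $n + 4$, giving sum of Dynkin indices $= 24 - (n+4) = 20 - n$.

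For the Type II strata, which are exactly the rays of $\cF_\rc$ lying on $\{\lambda^2 = 0\}$, the toroidal stratum is a divisor of dimension $17$. Theorem \ref{thm:moduli-slc-pair-type2} presents each of the $\widetilde{\mathbb{E}}_8 \widetilde{\mathbb{E}}_8$ and $\widetilde{\mathbb{D}}_{16}$ moduli as $17$-dimensional ($16$ parameters from points on the elliptic curve together with the $j$-line), so the dimensions again agree. The main obstacle I anticipate is not the combinatorial identity itself, which is bookkeeping, but setting up a clean dictionary between the faces of $\cF_\rc$ described in Figure \ref{fig:cones} and the Type III combinatorial types of Notation \ref{d-notation}: I need to trace how each vanishing condition $\ell_i = 0$ merges adjacent singularities of $B_{LR}(\ell)$ and confirm that this bijection preserves dimension and accounts for every face exactly once.
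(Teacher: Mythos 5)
Your proposal is correct and takes essentially the same route as the paper: both sides compute the toroidal dimension as the codimension of the cone and the slc dimension via Theorem~\ref{thm:moduli-slc-pair}, reducing the lemma to the same combinatorial bookkeeping. The only cosmetic difference is that the paper verifies the identity by checking the nine maximal cones and noting that both dimensions grow by $c$ on a codimension-$c$ face, whereas you verify it directly by the global charge count $24-(n+4)=20-n$; these are the same computation organized differently.
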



\begin{proof}
  The dimension of a stratum of the toroidal compactification is the
  codimension of the corresponding cone. The dimensions of strata in
  $\slcrc$ are computed in Theorem~\ref{thm:moduli-slc-pair}.
  For type III cones the codimension of the cone and the dimension of
  the corresponding stable stratum both equal to the sum of the indices $n_i$
  in the label $(E_{n_0}|E'_1|D_{n_0}|D_0') A_{n_1}\dotsc
  A_{n_k}(E_{n_{k+1}}|E'_1|D_{n_{k+1}}|D_0')$, resp. with $D$ replaced
  by ${\bf C}$ and all letters bolded.


  For the Type II cones $\wE_8\wE_8$ and
  $\wD_{16}$ the strata in
  $\torrc$ are divisors, and the dimensions of their image strata
  ${\bf \wE_8\wE_8}$, ${\bf \wC_{16}}$ in $\slcrc$ are $8+8+1=16+1=17$.
\end{proof}

\begin{theorem}\label{thm:rc-comp}
  The normalization of the stable pair compactification $\slcrc$ is
  the toroidal compactification $\torrc$. 
\end{theorem}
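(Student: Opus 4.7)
The plan is to apply Theorem \ref{threeinputs} together with Corollary \ref{normalization} to the moduli space $F=F_\el$ with the polarizing divisor $R=s+m\sum f_i$ and the fan $\cF=\cF_\rc$. All of the necessary ingredients have been built up in Sections \ref{sec:K-models}--\ref{sec:irr-comps}, so the proof is essentially a verification that the four inputs \textbf{(div)}, \textbf{(d-ss)}, \textbf{(fan)}, \textbf{(qaff)} and the dimension condition \textbf{(dim)} all hold.

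First I would check input \textbf{(div)}. Given a primitive class $[\lambda]\in\bP(\oC_\Q\cap\delta^\perp/\delta)$ lying in some cone $\sigma_{LR}$, Corollary \ref{final-monodromy} gives explicit formulas converting the coordinates $a_i,b_L,c_L,b_R,c_R$ of a scalar multiple $n\lambda$ into the combinatorial parameters $\ell=(\ell_1,\dots,\ell_{19})\in\bZ_{\ge0}^{19}$. By choosing $n$ divisible by $6$ we satisfy the divisibility constraints in Definition \ref{monodivisor}, and the construction of Section \ref{sec:K-models} together with Definition \ref{divisordef} then produces a divisor model $X(n\lambda)=X_{LR}(\ell)$ with the required monodromy invariant. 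The Type II cases $\wX_{12}\wX_{12}$ and $\wY_4\wY_{20}$ of Section \ref{sec:type-ii} handle the isotropic rays.

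Next, input \textbf{(d-ss)} is exactly Proposition \ref{dssinput}: any $d$-semistable deformation keeping $s$ and $f$ Cartier is itself an elliptically fibered surface of the same combinatorial type, and by Proposition \ref{limit} the divisor $R$ deforms along with it to produce a divisor model. For \textbf{(fan)}, Proposition \ref{takestable} shows that the stable type of $\oX_0(\lambda)$ (obtained by contracting vertical rulings and then section components carrying no marked fibers) depends only on the combinatorial type of $\lambda$, which by construction of $\cF_\rc$ is constant on the interior of each cone. Input \textbf{(qaff)} is Corollary \ref{cor:affine}: each Type III stratum of $\slcrc$ of fixed stable type is a product of quotients of tori by finite groups as in Theorem \ref{thm:moduli-slc-pair}, hence affine, hence quasiaffine.

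Applying Theorem \ref{threeinputs} then yields a morphism $\torrc\to\slcrc$ taking strata to strata. To upgrade this to a normalization, I would invoke Corollary \ref{normalization} and verify condition \textbf{(dim)}, which is precisely the content of Lemma \ref{diminput}: on Type III strata both dimensions equal $20$ minus the length of the combinatorial type (equivalently the sum of the Dynkin indices of the stable type), and on the Type II strata $\wt\bE_8\wt\bE_8$ and $\wt\bD_{16}$ both dimensions are $17$, as computed from Theorem \ref{thm:moduli-slc-pair-type2}. Since $\torrc$ is normal and the induced morphism is finite and birational, Zariski's main theorem identifies it with the normalization, completing the proof. The main obstacle, already surmounted in the preceding sections, is verifying input \textbf{(d-ss)} — i.e. that the locally closed condition ``elliptically fibered'' is cut out precisely by the Cartier conditions on $s$ and $f$, which required the period-lattice analysis of Section \ref{sec:dss} and the monodromy computation of Corollary \ref{final-monodromy}.
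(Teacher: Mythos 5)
Your proposal matches the paper's proof essentially verbatim: the same four inputs are verified by the same results (Definition~\ref{monodivisor} for \textbf{(div)}, Proposition~\ref{dssinput} for \textbf{(d-ss)}, the first part of Proposition~\ref{takestable} for \textbf{(fan)}, Corollary~\ref{cor:affine} for \textbf{(qaff)}), and \textbf{(dim)} is handled by Lemma~\ref{diminput} together with Corollary~\ref{normalization}. The only point the paper makes explicit that you leave implicit is that \textbf{(dim)} also requires the strata maps to surject onto the moduli of stable pairs (otherwise the image could a priori be a lower-dimensional subvariety), which is supplied by the second part of Proposition~\ref{takestable}.
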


\begin{proof} We apply Theorem \ref{threeinputs} to the case at hand.
Taking the divisor model
$X(\lambda)$ of Definition \ref{monodivisor} gives input {\bf (div)} for the integer $n=6$.
Proposition \ref{dssinput} implies input {\bf (d-ss)}. Next, the first part of Proposition 
\ref{takestable} gives input {\bf (fan)}. By {\bf (div)} and {\bf (d-ss)},
all strata of stable surfaces are been enumerated.
Thus, input {\bf (qaff)} reduces to
Corollary~\ref{cor:affine}. 
We conclude that there is a morphism $\torrc\to \slcrc$.

Furthermore, this morphism sends toroidal strata to the strata of the corresponding
stable types. Thus, the additional condition {\bf (dim)} follows from Lemma~\ref{diminput}
if we can prove that the morphisms on strata surject onto the
moduli spaces of stable pairs. This follows from the second part of
Proposition \ref{takestable}, as the restricted period $\psi\big{|}_\Lambda$
encodes the image of $0$ under the period map $(C,0)\to \torrc$.
Corollary \ref{normalization} implies the theorem.
\end{proof}
 
\begin{question} Having described the normalization of the stable pair compactifications for $R^\ram$ and $R^\rc$ it is natural to ask: Is the normalization of the compactification for $tR^\ram+(1-t)R^\rc$ toroidal for all $t\in [0,1]$? At what values of $t$ does the compactification change, and how? \end{question}

\subsection{The normalization map}
\label{sec:normalization}

Let ${\bf S_1\cdots S_N}$ be a Type III stable type. By
Thm.~\ref{thm:moduli-slc-pair} the stratum in $\slcrc$ of stable pairs
$(X,\epsilon R)$ of this type is
\begin{displaymath}
  (T / W^{ACE}) / G_{\LR}
\end{displaymath}
where $\Lambda^{ACE} = \oplus_{i=1}^n\Lambda_i$ is the sum of the $ACE$
lattices of ${\bf S_i}$-type, $T = \Hom(\Lambda, \bC^*)$ is the
corresponding torus, $W^{ACE} = \oplus W(\Lambda_i)$ is the Weyl group,
and $G_\LR = \bZ_2$ if
the type is left-right symmetric and trivial otherwise.

Recall once again that the $C_n=D_n$ as lattices but
$W(C_n)/W(D_n)=\bZ_2$.

\begin{definition}
  For a stable type ${\bf S_1\dots S_N}$ we have an embedding of
  the corresponding $ADE$  lattice
  $\Lambda \subset \IIell$: the lattices $\Lambda_i$ are generated by
  the explicit elements of $\IIell$, the roots $\alpha_i$ and the
  vectors $\beta_L$, $\beta_R$, $\gamma_L$, $\gamma_R$. The generators
  of the $E_1$ and $D_1$ lattices are $\beta$ and $\gamma$ respectively.
  We denote by $\oLambda$ the saturation of $\Lambda$ in $\IIell$, and
  by $\Tsat = \Hom(\oLambda,\bC^*)$ the corresponding torus.
\end{definition}

\begin{theorem}\label{thm:map-strata}
  For the type III strata in $\torrc$ and $\slcrc$ the following holds:
  \begin{enumerate}
  \item The only strata of $\torrc$ glued by the normalization
    morphism $\torrc\to\slcrc$ are the strata $D_0\cdots$,
    $D'_0\cdots$ (on either left and right ends) both mapping to
    the ${\bf C_0}\cdots$ stratum of $\slcrc$.
  \item For a cone $\sigma$ of the fan $\cF_\rc$ with stable type
    ${\bf S_1\cdots S_N}$, the corresponding stratum in $\torrc$ is
    \begin{math}
      (\Tsat / W^{ADE}) / G^\cF_{\LR},
    \end{math}
    where $G^\cF_\LR = \bZ_2$ or $1$ depending on whether the cone
    $\sigma$ is left-right symmetric or not, i.e. $\sigma$ and
    $\iota\sigma$ are in the same $W(\IIell)$-orbit for the involution
    $\iota$, $O^+(\II_{1,17}) / W(\IIell) = \la\iota\ra$.
  \item The map of strata
    \begin{displaymath}
      (\Tsat / W^{ADE}) / G^\cF_{\LR} \longrightarrow
      (T / W^{ACE}) / G_{\LR}
    \end{displaymath}
    is defined by the homomorphism of tori $\Tsat\to T$, dual to the
    lattice embedding $\Lambda\to\oLambda$ and by the $2:1$ map for
    each $D_n\to {\bf C_n}$ type with $n>0$. 
  \end{enumerate}
\end{theorem}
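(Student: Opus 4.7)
The plan is to leverage the standard theory of toroidal compactifications at a $0$-cusp: for a cone $\sigma$ of $\cF_\rc$, the corresponding stratum of $\torrc$ is the quotient $O_\sigma/{\rm Stab}_\Gamma(\sigma)$, where the orbit torus is $O_\sigma=\Hom(\sigma^\perp\cap\IIell,\bC^*)$ under the identification $\IIell^*\cong\IIell$ furnished by the unimodular intersection form. The key lattice identification to establish is $\sigma^\perp\cap\IIell=\oLambda$, which is a direct case check using Corollary~\ref{final-monodromy}: for a face of a maximal cone $\sigma_{LR}$ cut out by equations among the coordinate forms $a_i, b_L, c_L, b_R, c_R$, the orthogonal lattice in $\IIell$ is spanned by the corresponding normals $\alpha_i, \beta_L, \gamma_L, \beta_R, \gamma_R$, and these are precisely the generators of the lattices $\Lambda_i$ attached to the summands of the stable type via Definition~\ref{stabletype} and Table~\ref{tab:small-DE}. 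One then verifies saturation in $\IIell$ directly in each case.

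Next I would decompose ${\rm Stab}_\Gamma(\sigma)$. The reflections in the $(-2)$-roots of $\oLambda$ generate a subgroup isomorphic to $W=\bigoplus W(\Lambda_i)$ that stabilizes $\sigma$ pointwise (as reflections in hyperplanes containing $\sigma_\lin$) and acts on $O_\sigma=\Tsat$ as prescribed. The remaining outer quotient ${\rm Stab}_\Gamma(\sigma)/W$ consists of lattice automorphisms permuting the chambers of $\cF_\rc$ around $\sigma$; using $\Gamma=W(\IIell)\rtimes\Aut G_\cox$ and inspection of the fan described in Section~\ref{sec:brunyate-fan}, this quotient is $\bZ_2=G^\cF_\LR$ exactly when $\sigma$ and its $LR$-flip $\iota\sigma$ are $\Gamma$-equivalent, and trivial otherwise. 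This proves part~(2). Part~(1) then follows from Proposition~\ref{takestable} combined with Remark~\ref{gluedstrata}: the only pair of distinct combinatorial types of $\Gamma$-orbits mapping to the same stable type is $Y_2Y_2\cdots$ and $Y_2\iY_2\cdots$ (both giving $\bD_0\cdots$), which correspond to cones contained in $\sigma_{2*}$ and $\sigma_{3*}$ (and analogously on the right end); as explicitly noted in Section~\ref{sec:brunyate-fan}, these are related only by the reflection $w$ in the $(-4)$-vector $\gamma_L$ which does not preserve $\IIell$, so they represent distinct $\Gamma$-orbits and give distinct strata in $\torrc$ mapping to a single stratum in $\slcrc$.

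For part~(3), the morphism between strata is forced by the construction of the normalization in Theorem~\ref{thm:rc-comp}: by Proposition~\ref{takestable}, a $d$-semistable divisor model $X_0(\lambda)$ with period point $\psi\in\Tsat$ produces a stable pair whose moduli parameters under Theorem~\ref{thm:moduli-slc-pair} are computed from the restricted period $\psi|_\Lambda\in T$. This restriction is, by construction, the homomorphism of tori dual to $\Lambda\hookrightarrow\oLambda$. Compatibility with the $W$-action is immediate (the same reflections act on both sides); $G^\cF_\LR$ maps to $G_\LR$ via the identification of left-right symmetries on both sides; and the extra $G_\bD$ quotient on the stable side is accounted for either by the index in $\oLambda/\Lambda$ when a $\bD$-end generator is imprimitive in $\IIell$, or by the outer automorphism $O(D_k)/W(D_k)=\bZ_2$ visible only on the stable moduli side when $\Lambda$ is already saturated---in either case contributing one $\bZ_2$ factor per $\bD$-end.

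The main obstacle I anticipate is the case-by-case lattice bookkeeping: verifying $\oLambda=\sigma^\perp\cap\IIell$ and properly attributing each $\bZ_2$ factor in $G_\bD$ to either a saturation-index or an outer automorphism requires carefully distinguishing primitive versus imprimitive embeddings of the $E_1$, $\iE_1$, and $D_k$ sublattices, precisely the data governing the $Y_2Y_2$ vs.\ $Y_2\iY_2$ split on the toroidal side.
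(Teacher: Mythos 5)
Your proposal is correct and follows essentially the same route as the paper's proof: part (1) from Remark~\ref{gluedstrata} together with the observation that the $Y_2Y_2$ and $Y_2\iY_2$ cones lie in distinct $\Gamma$-orbits; part (2) by writing the stratum as the orbit torus $\Hom(\sigma^\perp\cap\IIell,\bC^*)$ modulo $\Stab_\Gamma(\sigma)$ and checking the end behaviors against Table~\ref{tab:small-DE}; and part (3) by identifying the map of strata with the restriction of the $d$-semistable period point to the component lattices, i.e.\ the torus homomorphism dual to $\Lambda\hookrightarrow\oLambda$. The one genuine (but minor) difference is that the paper does not verify $\oLambda=\sigma^\perp\cap\IIell$ case by case as you propose: it deduces this identity from the Friedman--Scattone description of the boundary period map via the limit mixed Hodge structure together with a dimension count, reserving the explicit computation of $\oLambda/\Lambda$ for Lemma~\ref{lem:saturation} afterwards.
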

\begin{proof}
  (1) follows from Def.~\ref{stabletype}.

  (2) The stratum in $\torrc$ is the the torus orbit corresponding to
  $\sigma$, which is $\Tsat$ as defined, modulo the stabilizer of
  $\sigma$ in $O^+(\IIell)$, equal to the stabilizer of $\sigma$ in
  $W(\IIell)$ plus the involution $\iota$ if the cone $\sigma$ is
  symmetric.  $\Stab_{W(\IIell)}(\sigma)$ is the stabilizer of the
  minimal Coxeter cone containing it.

  We observe that for each of the
  cones with the end behavior $E_1$, $E'_1$, $E_2$ the
  stabilizer is the same as the Weyl group of the lattice for its
  stable type ${\bf E_1}$, ${\bf E_1'}$, ${\bf E_2}$, as given in
  Table~\ref{tab:small-DE}.
  For the cones $E_0$, $D_0$, $D'_0$
  with stable types ${\bf E_0}$, ${\bf C_0}$ the stabilizers are trivial.  The
  other cones of $\cF_\rc$ are already Coxeter cones and for them the
  stabilizer is obviously the corresponding Weyl group.

  (3) As in the proof of Theorem~\ref{threeinputs}, we pick a
  monodromy invariant $\lambda\in\sigma^0$ in the interior of the cone
  and consider a family of divisor models over the partial toroidal
  compactification $\oF^\lambda$ with a boundary divisor
  $\Delta$.  The space $\oF^\lambda$ is an open subset of the blowup
  of $\torrc$ at the stratum corresponding to $\lambda$. In terms of
  the character groups this gives embeddings
  \begin{math}
    \sigma^\perp \to \lambda^\perp \to \IIell.
  \end{math}

  On the other hand, as in Section~\ref{sec:dss} there is a period map
  $\Delta \to \Hom(\bL, \bC^*)$, where $L=\ker\big( \oplus_i \Pic V_i
  \to \oplus_{i<j} \Pic D_{ij} )$ and
  $\bL = \coker(\Xi \to L)$. In terms of the character lattices it
  corresponds to the homomorphism
  \begin{math}
    \bL \to \lambda^\perp.
  \end{math}

  As in the proof of Prop.~\ref{takestable}, the composition of this
  period map and the projection to the periods of the irreducible
  components of $(X_0,R_0)$ is given by the embedding of the character
  lattices
  \begin{math}
    \Lambda = \oplus\Lambda_i \to \bL.
  \end{math}
  Putting this together, we have homomorphisms
  \begin{displaymath}
    \sigma^\perp \to \lambda^\perp \to \IIell
    \quad\text{and}\quad
    \Lambda \to \bL \to \lambda^\perp.
  \end{displaymath}
  
  For a one-parameter degeneration $(X,R)\to (S,0)$ of K3 surfaces the
  period point of the central fiber  $X_0$ over $\Delta\subset\oF^\lambda$
  is determined by the limit mixed Hodge structure.  By
  \cite[Prop. 3.4]{friedman1986type-III} the map
  $\Delta\to\Hom(\bL,\bC^*)$ is defined by the mixed Hodge structure
  of $X_0$. It follows that the map of strata is given by the map of
  tori with the character groups $\Lambda\to\sigma^\perp\cap\IIell$. 

  By comparing the dimensions of the spaces, it follows that the image
  of $\Lambda\otimes\bR$ in $\lambda^\perp \subset \IIell\otimes\bR$
  is $\sigma^\perp$ and so $(\im\Lambda)^{\rm sat} =
  \sigma^\perp\cap\IIell = \oLambda$. 
\end{proof}

It remains to find the saturation $\oLambda$. This is enough to do for
the cones with end behavior 1 and 3, since the strata for the end
behaviors 2 and 3 are identified by the map $\torrc\to\slcrc$. For
these cones, the description is given by the next lemma (with a
trivial proof), which we apply to the vectors
$-\beta_L,\alpha_2, \alpha_3\dotsc$,
resp. $\alpha_2,\gamma_L,\alpha_1,\dotsc$ that satisfy the linear
relations~\eqref{eq:lin-relations}.

\begin{lemma}\label{lem:saturation}
  Suppose that vectors $v_1,\dotsc,v_{19}$ generate $\IIell$ with a
  single linear relation $\sum_{i=1}^{19} n_iv_i = 0$, $n_i\in\bZ$,
  $\gcd(n_1,\dotsc,n_{19})=1$. For a subset $I\subset\{1,\dotsc,19\}$
  let $\Lambda = \la v_i,\ i\in I\ra$. Then $\oLambda / \Lambda =
  \bZ/d\bZ$, where $d = \gcd(n_j,\ j\not\in I)$. 
We use the convention that $\gcd(0,\dotsc,0) = 1$.
\end{lemma}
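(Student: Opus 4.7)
The plan is to present $\IIell$ explicitly as a quotient of a free $\bZ$-module by the single relation, and then read off the torsion of $\IIell/\Lambda$ by a one-step linear-algebra calculation.

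First, I would let $F = \bZ^{19}$ with basis $e_1,\dotsc,e_{19}$ and consider the surjection $\pi\colon F\to \IIell$ sending $e_i\mapsto v_i$. Because the $v_i$ span the rank-$18$ lattice $\IIell$ and satisfy a single linear relation, $\ker\pi$ is a cyclic subgroup of $F$; the hypothesis $\gcd(n_1,\dotsc,n_{19})=1$ means that the vector $N := (n_1,\dotsc,n_{19})$ is primitive in $F$, hence $\ker\pi = \bZ\cdot N$ exactly and $\IIell \cong F/\bZ N$.

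Next, writing $J = \{1,\dotsc,19\}\setminus I$ and $F_I = \bigoplus_{i\in I}\bZ e_i$, the sublattice $\Lambda$ is by definition the image $\pi(F_I)$, so
\begin{displaymath}
\IIell/\Lambda \;=\; F/(F_I + \bZ N) \;\cong\; \bZ^J / \bZ\cdot (n_j)_{j\in J},
\end{displaymath}
via the projection $F\to F/F_I\cong \bZ^J$, under which $N$ maps to its $J$-components. The saturation $\oLambda$ is by definition the preimage of the torsion subgroup of $\IIell/\Lambda$, so $\oLambda/\Lambda$ is precisely the torsion of the right-hand side. The proof is then closed by the standard observation that for a nonzero vector $(n_j)\in \bZ^J$ with $\gcd$ of entries equal to $d$, one may extend $\tfrac{1}{d}(n_j)$ to a $\bZ$-basis of $\bZ^J$, giving $\bZ^J/\bZ\cdot (n_j)\cong \bZ^{|J|-1}\oplus \bZ/d\bZ$ with torsion $\bZ/d\bZ$; in the degenerate case where $n_j=0$ for all $j\in J$ the relation lies entirely in $\Lambda$, $\Lambda$ is already saturated, and $\oLambda/\Lambda = 0 = \bZ/1\bZ$, consistent with the stated convention $\gcd(0,\dotsc,0)=1$.

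There is no substantive obstacle: the only point needing care is that the hypothesis $\gcd(n_i)=1$ is used exactly to guarantee that $\bZ N$ is the \emph{full} kernel of $\pi$ rather than a proper finite-index subgroup of it; if one forgot this, the identification $\IIell\cong F/\bZ N$ would be off by a finite cokernel and the computed torsion would be wrong. Once primitivity is in hand, the remainder is a direct application of Smith normal form to the single relation vector $(n_j)_{j\in J}$.
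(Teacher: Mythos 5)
Your proof is correct and is exactly the standard argument the paper has in mind when it calls the proof ``trivial'' and omits it: present $\IIell$ as $\bZ^{19}/\bZ N$ (using $\gcd(n_i)=1$ to see $\bZ N$ is the full kernel), reduce modulo the coordinates in $I$, and read off the torsion of $\bZ^J/\bZ(n_j)_{j\notin I}$ via Smith normal form. Your handling of the degenerate case $n_j=0$ for all $j\notin I$ also matches the stated convention, so nothing is missing.
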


Finally, we give a description of the normalization map for the Type
II strata. 

\begin{theorem}\label{thm:map-strata-type2}
  The $\wE_8\wE_8$ stratum of $\torrc$ maps to the
  ${\bf \wE_8\wE_8}$ stratum of $\slcrc$ isomorphically. For
  $\wD_{16}\to{\bf \wC_{16}}$, the map of the strata has degree 8 and it is
  \begin{displaymath}
    [\Hom(D_{16}^+, \cE) :_R W(D_{16})] \to 
    [\Hom(C_{16}, \cE) :_R W(C_{16})]
  \end{displaymath}
  where $\cE$ is the universal elliptic curve over the $j$-stack and
  $D_{16}^+ = \II_{0,16}$.
\end{theorem}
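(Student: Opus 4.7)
The plan is to adapt the Type III analysis of Theorem~\ref{thm:map-strata} to the Type II setting, where both the toroidal and slc strata take the form of finite group quotients of families of abelian varieties over the $j$-line, and the comparison map is again dictated by the period map / mixed Hodge structure of a one-parameter degeneration.

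First I will describe the Type II boundary of $\torrc$ explicitly. Each of the two Type II rays is spanned by an isotropic $\lambda \in \IIell$ lifting to a rank-2 isotropic sublattice $U = \bZ\delta\oplus\bZ\tilde\lambda \subset \II_{2,18}$. By the standard theory of toroidal compactifications of type IV domains, the boundary divisor over the corresponding 1-cusp is a family of abelian varieties of the form $\Hom(U^\perp/U,\cE)$ fibered over the modular curve, quotiented by the image $\bar G = \Stab_\Gamma(\lambda)/U_\lambda$ of the stabilizer modulo its unipotent radical. The lattice $U^\perp/U$ is computed as $\lambda^\perp/\lambda$ in $\IIell$, which is an even positive definite unimodular lattice of rank $16$: $E_8\oplus E_8$ for the $\wE_8\wE_8$ ray, and $D_{16}^+=\II_{0,16}$ for the $\wD_{16}$ ray. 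The stabilizer groups will be identified by Vinberg's chamber analysis: $\bar G = W(E_8^2)\rtimes \bZ_2$ for the $\wE_8\wE_8$ ray (the extra $\bZ_2$ coming from the involution of $G_\cox$ swapping the two $E_8$ components), and $\bar G = W(D_{16}) = W(D_{16}^+)$ for the $\wD_{16}$ ray.

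Next I will construct the morphism on strata. Exactly as in the proof of Theorem~\ref{thm:map-strata}, the period map extends to the Type II boundary, and by \cite[Prop.~3.4]{friedman1986type-III} the mixed Hodge structure of the central fiber over a Type II arc identifies the toroidal character lattice with the character lattice of the slc moduli torus coming from Theorem~\ref{thm:moduli-slc-pair-type2}. In the $\wE_8\wE_8$ case, the root lattice $E_8^2$ appearing on the slc side is already unimodular, so it coincides with its saturation $\lambda^\perp/\lambda = E_8^2$ on the toroidal side, and the stabilizer groups $W(E_8^2)\rtimes \bZ_2$ agree on the two sides. Hence the map is an isomorphism of stacks.

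In the $\wD_{16}$ case, the root lattice on the slc side is $D_{16}$, while the toroidal side records its saturation $D_{16}^+$, which is the unique even unimodular overlattice with index $[D_{16}^+:D_{16}]=2$. Dualizing this inclusion gives the restriction isogeny $\Hom(D_{16}^+,E) \to \Hom(D_{16},E)$, whose kernel is $\Hom(D_{16}^+/D_{16},E) = E[2] \cong (\bZ/2)^2$, of order $4$. Composing with the further quotient by the graph automorphism $O(D_{16})/W(D_{16}) = \bZ_2$ on the target multiplies the degree by $[O(D_{16}):W(D_{16})] = 2$, yielding total degree $4\cdot 2 = 8$, as claimed.

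The main obstacle will be step one: verifying precisely that the Type II boundary divisor of $\torrc$ is of the stated form $[\Hom(\lambda^\perp/\lambda,\cE):\bar G]$ with exactly the stabilizer group $\bar G$ claimed above, since the $\wD_{16}$ case in particular requires showing that the outer involution of $D_{16}$ does \emph{not} lie in the image of $\Stab_\Gamma(\lambda)$. Checking that the map induced by the mixed Hodge structure is literally the one dual to the lattice embedding $D_{16}\hookrightarrow D_{16}^+$ (equivalently, that as a Type II arc $(C,0)\to \torrc$ is resolved, the $16$ nodal fibers on $\wt\bD_{16}$ acquire the correct $E[2]$-ambiguity in their positions on the double elliptic curve) is the crucial input that pins down the degree. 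Once these verifications are in place, the degree computation reduces to the lattice index calculations above.
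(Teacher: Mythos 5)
Your proposal is correct and follows essentially the same route as the paper: identify the Type II toroidal stratum as $[\Hom(\Lambda,\cE):O(\Lambda)]$ for the unimodular lattice $\Lambda=J^\perp/J$ (here $E_8^2$ resp.\ $D_{16}^+$), compare with Theorem~\ref{thm:moduli-slc-pair-type2}, and compute the degree as $|\Hom(D_{16}^+/D_{16},E)|\cdot[O(D_{16}):W(D_{16})]=4\cdot 2$. The ``main obstacle'' you flag is settled purely lattice-theoretically rather than by a chamber analysis: the group acting on the boundary abelian variety is $O(D_{16}^+)$, and $O(D_{16}^+)=W(D_{16})$ because odd sign changes move the glue vector to the other spinor class and so do not preserve the even overlattice.
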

\begin{proof}
  The $1$-cusps of the Baily-Borel compactification $\oF^{\rm BB}$
  correspond to the primitive isotropic planes
  $J\subset\II_{2,18}$. One has
  $\II_{2,18}\simeq J\oplus\Lambda\oplus \hJ$ for the unimodular
  lattice $\Lambda=J^\perp/J$, and the respective stratum in $\torrc$
  is (the coarse moduli space of) $[\Hom(\Lambda,\cE) : O(\Lambda)]$,
  cf. \cite{ash1975smooth-compactifications, clingher07on-k3-surfaces}. 
  
  For $\wE_8^2$ one has $\Lambda=E_8^2$ and
  $O(E_8^2)=W(E_8^2)\rtimes\bZ_2$, so we get the same strata in
  $\torrc$ and $\slcrc$ by Theorem~\ref{thm:moduli-slc-pair-type2}.
  For $\wD_{16}$ one has $\Lambda = D_{16}^+ = \II_{0,16}$, a $2:1$
  extension of $D_{16}$, and $O(D^+_{16}) = W(D_{16})$, an index 2
  subgroup of $W(C_{16})$. So the map of strata is a composition of
  quotients by $\cE[2]$ and $\bZ_2$ and it has degree $4\cdot 2=8$.
\end{proof}


\bibliographystyle{amsalpha}

\def\cprime{$'$}
\providecommand{\bysame}{\leavevmode\hbox to3em{\hrulefill}\thinspace}
\providecommand{\MR}{\relax\ifhmode\unskip\space\fi MR }
\providecommand{\MRhref}[2]{%
  \href{http://www.ams.org/mathscinet-getitem?mr=#1}{#2}
}
\providecommand{\href}[2]{#2}

\end{document}